\definecolor{mypink1}{rgb}{0.858, 0.188, 0.478}
\definecolor{mypink2}{RGB}{219, 48, 122}
\definecolor{mypink3}{cmyk}{0, 0.7808, 0.4429, 0.1412}
\definecolor{mygray}{gray}{0.6}
\definecolor{mylightred}{RGB}{255,200,200}
\definecolor{mylightblue}{RGB}{30,144,255}
\definecolor{mylightgreen}{RGB}{0,153,0}
\journalname{}
\begin{document}

\title{Generalized Hukuhara Weak Subdifferential and its Application on Identifying Optimality Conditions for Nonsmooth Interval-valued Functions
}

\titlerunning{$gH$-Weak Subdifferential Calculus
and its Application on IOPs
}

\author{Suprova Ghosh \and Debdas Ghosh \thanks{Debdas Ghosh (Corresponding Author)}}

\authorrunning{S. Ghosh and D. Ghosh}

\institute{Suprova Ghosh (Email: suprovaghosh.rs.mat19@itbhu.ac.in) \at Department of Mathematical Sciences, Indian Institute of Technology (Banaras Hindu University) Uttar Pradesh  221005, India
\and
Debdas Ghosh (Email: debdas.mat@iitbhu.ac.in) \at Department of Mathematical Sciences, Indian Institute of Technology (Banaras Hindu University) Varanasi, Uttar Pradesh  221005, India
\and
}

\date{Received: date / Accepted: date}

\maketitle

\begin{abstract}
In this article, we introduce the idea of $gH$-weak subdifferential for interval-valued functions (IVFs) and show how to calculate $gH$-weak subgradients. It is observed that a nonempty $gH$-weak subdifferential set is closed and convex. In characterizing the class of functions for which the $gH$-weak subdifferential set is nonempty, it is identified that this class is the collection of $gH$-lower Lipschitz IVFs. In checking the validity of sum rule of $gH$-weak subdifferential for a pair of IVFs, a counterexample is obtained, which reflects that the sum rule does not hold. However, under a mild restriction on one of the IVFs, one-sided inclusion for the sum rule holds. Next, as applications, we employ $gH$-weak subdifferential to provide a few optimality conditions for nonsmooth IVFs. Further, a necessary optimality condition for interval optimization problems with difference of two nonsmooth IVFs as objective is established. Lastly, a necessary and sufficient condition via augmented normal cone and $gH$-weak subdifferential of IVFs for finding weak efficient point is presented.
\keywords{Interval optimization \and Nonsmooth interval-valued functions \and $gH$-weak subgradient \and $gH$-Fr\'echet lower subdifferential \and Difference of two IVFs}

\end{abstract}

\section{Introduction}

Interval arithmetic of  Moore \cite{Moore1966} is the foundation stone in interval analysis. Realistic applicability of Moore's method is relevant till today. We can currently discover several papers in the community of interval-valued optimization problems (IOPs) where Moore's interval analysis is applied extensively. To find optimality conditions for IOPs, ideas of derivatives for interval-valued function (IVF)  have been proposed \cite{chalco2013, markov, Osuna,  stefanini2009,Ghosh2019derivative}. In \cite{markov}, the concept of $gH$-differentiability for IVFs was introduced.
Chalco-Cano \cite{chalco2019} addressed the algebraic property of $gH$-differentiable interval-valued functions. Ghosh et al. \cite{Ghosh2019derivative} proved the existence of $gH$-directional derivative for convex IVFs and presented optimality conditions for IOPs. \\

It is a familiar fact that in nonsmooth optimization, the classical gradient algorithm fails:  even in finding the optimum point, as there is no derivative, the conventional optimality condition $\nabla f(x)=0$ does not hold. More crucially, it is observed that optima of an almost everywhere differentiable function categorically arise at nondifferentiable points---for instance, take the minimization of $f(x) = |x|$. The notion of subdifferential, defined by Rockafellar \cite{rockafellar}, is a crucial factor in the body of optimization theory that perfectly replaces the role of the gradient to identify optima for convex functions. However, subdifferential is inadequate in developing optimality conditions for nonconvex optimization problems. Due to this insufficiency, the idea of subdifferential has been generalized. Most common of such generalizations is weak subdifferential \cite{azimov}. Based on this notion, a strong duality theorem for nonconvex inequality constrained problem has been found by defining a weak conjugate function \cite{Yalcin}. A substantial application of this notion in duality theory with the help of a weak subdifferentiable perturbation function is given in \cite{gasimov}. \\

In the context of nonsmooth calculus for nondifferentiable convex IVFs, Ghosh et al.  \cite{Ghosh2020lasso} has recently proposed the idea of $gH$-subgradient and $gH$-subdifferential. The same article \cite{Ghosh2020lasso} found that $gH$-directional derivative is the maximum of all the products of the direction and $gH$-subgradients. Afterward, Anshika et al. \cite{anshika} characterized weak efficiency for nonconvex composite optimization problems with the subdifferential sets of convex interval-valued functions. In \cite{anshika}, by formulating supremum and infimum of an IVF, a Fermat-type, a Fritz-John-type, and a KKT-type weak efficiency condition for nonsmooth IOPs have been derived. Anshika et al.  \cite{anshika2} introduced $gH$-subdifferential of interval-valued value function.  Furthermore, Chauhan et al.  \cite{chauhan2021generalized} derived the notion of $gH$-Clarke derivative for IVFs and IOPs. Under the Clarke subdifferentiablility assumption, Chen and Li \cite{chen}  provided KKT conditions for efficient solutions. Additionally, Karaman et al. \cite{karaman} presented two subdifferentials for interval-valued functions and some optimality criteria, which were obtained by using subdifferentials. \\

From the available literature on nonsmooth IOPs, it is found that the study of $gH$-weak subdifferential notion has not yet been addressed. However, the notion of $gH$-weak subdifferential might be effective to characterize and capture the efficient solutions of IOPs with nonconvex and nonsmooth IVFs. By using subgradient, one may face difficulties to solve problem which does not satisfy convexity assumption because  subgradient refers to the slope of a supporting hyperplane to the graph of convex functions in convex analysis. Thus, in this study, we introduce the notion of weak subgradient, which does not need any kind  of convexity.  \\

In this article, we attempt to show various properties of weak-subdifferential and its use in nonsmooth nonconvex IOPs. As an application of the proposed $gH$-weak subdifferential, we propose  a necessary  and sufficient optimality condition for finding weak efficient points of difference of two IVFs.  \\

The rest of article is presented as follows. Section \ref{section2} is devoted to the conventional properties of intervals, followed by calculus of IVFs.  Section \ref{section3}  introduces the notion of $gH$-weak subdifferential for IVFs and discusses their properties such as convexity, closedness and nonemptiness. Additionally, the role of $gH$-weak subdifferential to derive the necessary condition for weak efficiency for $gH$-weak subdifferentiable IVFs is shown in Section \ref{section3}. In Section \ref{section5}, we analyze the necessary condition for obtaining efficient solution of difference of two IVFs. Finally, we draw conclusion with future directions to extend the present study.

\section{\textbf{Preliminaries}} \label{section2}
\noindent In this section, required terminologies and notions on intervals including calculus of IVFs are given. Throughout the paper, we extensively use the following notations.
	
	\begin{itemize}
	\item $\mathbb{R}$ is the set of real numbers
    \item $\mathbb{R}_+$ represents the set of nonnegative real numbers
    \item $I(\mathbb{R})$ is the collection of all compact intervals
    \item $\overline{I(\mathbb{R})}=I(\mathbb{R})\cup \{{-\infty, +\infty}\}$
     \item $\textbf{0}=[0,0]$
    \item Elements of $I(\mathbb R)$ are presented by bold capital letters:   $\textbf X, \textbf Y, \textbf Z,  \ldots$
    \item $I(\mathbb{R})^n=I(\mathbb{R}) \times I(\mathbb{R})\times I(\mathbb{R})\times \cdots \times I(\mathbb{R})$ ($n$ times)
    \item  Interval vectors in $I(\mathbb R)^n$ are denoted by $\widehat{\textbf X}, \widehat{\textbf Y}, \widehat{\textbf Z}, \ldots$
    \item $B_{\alpha}(\bar u)$ is the open ball with center at $\bar u \in \mathbb R^n$ and radius $\alpha \geq 0$
    \item $\mathcal{N}(\bar x)$ is a neighbourhood of $\bar x \in \mathbb R^n$
    \item $\lVert \cdot \rVert_{I(\mathbb R)}$ denotes the norm on $I(\mathbb R)$
    \end{itemize}

\subsection{Arithmetic and  dominance of intervals}

Throughout the text, we represent an element $\textbf{X}$ of $I(\mathbb{R})$ by the corresponding small letter:
	\[\textbf{X} = [\underline{x}, \overline{x}],~\text{where}~\underline{x}~\text{and}~\overline{x}~ \text{are in} ~\mathbb{R}~\text{with} ~\underline{x} \leq \overline{x}.\]

\noindent
Recall that Moore's interval addition $(\oplus)$, subtraction $(\ominus)$, and multiplication $(\odot)$  \cite{Moore1966,Moore1979} are given by
	\begin{align*}
	  &  \textbf{X} \oplus \textbf{Y} = \left[~\underline{x} + \underline{y},~ \overline{x} +
	\overline{y}~\right], ~\textbf{X} \ominus \textbf{Y} = \left[~\underline{x} - \overline{y},~ \overline{x} -
	\underline{y}~\right], ~\text{and}~\\
	  &  \textbf{X} \odot \textbf{Y}  = \left[\min\left\{\underline{x}\; \underline{y},\; \underline{x}\overline{y},\; \overline{x}\underline{y},\; \overline{x}\overline{y}\right\},\; \max\left\{\underline{x}\; \underline{y},\; \underline{x}\overline{y},\; \overline{x}\underline{y},\; \overline{x}\overline{y}\right\} \right].
	\end{align*}
	\begin{definition}(\emph{$gH$-difference of intervals}  \cite{stefanini2008})\label{gh_difference}.  The $gH$-difference
		for a pair of intervals $\textbf{P}$ and $\textbf{Q}$, denoted by  $\textbf{P} \ominus_{gH} \textbf{Q}$, is the interval $\textbf{Y}$ such that
		\[\textbf{P} =  \textbf{Q} \oplus  \textbf{Y}~\text{or}~\textbf{Q} = \textbf{P} \ominus \textbf{Y}.\]
		It is well-known that for $\textbf{P} = \left[\underline{p},\overline{p}\right]$ and $\textbf{Q} = \left[\underline{q},\overline{q}\right]$,
		\[
		\textbf{P} \ominus_{gH} \textbf{Q} = \left[\min\{\underline{p}-\underline{q},
		\overline{p} - \overline{q}\},  \max\{\underline{p}-\underline{q}, \overline{p} -
		\overline{q}\}\right] \text{ and } \textbf{P} \ominus_{gH} \textbf{P} = \textbf{0}.
		\]
	\end{definition}
For two elements $\widehat{\textbf{I}}=(\textbf{I}_1,\textbf{I}_2,\ldots,\textbf{I}_n)$ and $\widehat{\textbf{J}}=(\textbf{J}_1,\textbf{J}_2,\ldots,\textbf{J}_n)$ of $I(\mathbb R)^n$, the algebraic operation $\widehat{\textbf{I}} \boldsymbol{\star}\widehat{\textbf{J}}$ is defined by
	\[
	 \widehat{\textbf{I}}\boldsymbol{\star}\widehat{\textbf{J}}=(\textbf{I}_1\boldsymbol{\star}\textbf{J}_1,\textbf{I}_2\boldsymbol{\star}\textbf{J}_2,\ldots,\textbf{I}_n\boldsymbol{\star}\textbf{J}_n),
	\]
	where $\boldsymbol{\star} \in \{\oplus,\ominus,\ominus_{gH}\}$.
\begin{definition}\label{interval_dominance}
	 	(\emph{Dominance of intervals} \cite{wu2007karush}). Let $\textbf{Z}$ and $\textbf{W}$ be in $I(\mathbb{R})$.
	 	\begin{enumerate}[(i)]
	 		\item $\textbf{W}$ is called \emph{dominated} by $\textbf{Z}$ if $\underline{z}~\leq~ \underline{w}$ and $\overline{z}~\leq~\overline{w}$, and then we express it by  $\textbf{Z}~\preceq~ \textbf{W}$.
	 		\item $\textbf{W}$ is said to be \emph{strictly dominated} by $\textbf{Z}$ if either `$\underline{z} ~\leq~ \underline{w}$  and $\overline{z} ~<~ \overline{w}$' or `$\underline{z} ~<~ \underline{w}$  and $\overline{z} ~\leq~ \overline{w}$', and then we express it by $\textbf{Z}~\prec~ \textbf{W}$.
	 		\item If $\textbf{W}$ is not dominated by $\textbf{Z}$, then we write $\textbf{Z}~\npreceq~ \textbf{W}$. If $\textbf{W}$ is not strictly dominated by $\textbf{Z}$, then we write $\textbf{Z}~\nprec~ \textbf{W}$.
	 		\item If $\textbf{W}~\npreceq~ \textbf{Z}$ and $\textbf{Z}~\npreceq~ \textbf{W}$, then it is called that \emph{none of $\textbf{W}$ and $\textbf{Z}$ dominates the other}, or $\textbf{W}$ and $\textbf{Z}$ \emph{are not comparable}.
	 	\end{enumerate}
	 \end{definition}
For any two elements $\widehat{\textbf{I}}=(\textbf{I}_1,\textbf{I}_2,\ldots,\textbf{I}_n)^\top$ and $\widehat{\textbf{J}}=(\textbf{J}_1,\textbf{J}_2,\ldots,\textbf{J}_n)^\top$ in $I(\mathbb{R})^n$,
\[\widehat{\textbf{I}} \preceq \widehat{\textbf{J}}\iff \textbf{I}_j \preceq \textbf{J}_j~\text{for all}~j=1, 2, \ldots, n.\]

\subsection{Concavity and differential calculus of IVFs}
 Let $\emptyset \neq \mathcal{Y} \subseteq \mathbb{R}^n$. Let an IVF ${\bf{\Phi}}:\mathcal{Y} \to I(\mathbb{R})$ be presented by
	\[{{\bf{\Phi}}}(y)=\left[\underline{\phi}(y),\overline{\phi}(y)\right] ~\forall~ y \in \mathcal{Y}, \]
    where $\underline{\phi}(y)\leq \overline{\phi}(y)$ for all $y \in \mathcal{Y}$ and $\underline{\phi}$ and $\overline{\phi}$ are called lower and upper real-valued functions on $\mathcal{Y}$ .\\
\begin{definition} (\emph{Concave IVF}). If $\mathcal{Y}$ is convex, then an IVF
		${\bf{\Phi}}$ is said to be a \emph{concave} IVF on $\mathcal{Y}$ if {for any} $y_1, y_2 \in \mathcal{Y},\beta_1,~\beta_2\in[0,\ 1],~\text{and}~\beta_1+\beta_2=1$,
	\begin{align*}
	\beta_1\odot{\bf{\Phi}}(y_1)\oplus\beta_2\odot{\bf{\Phi}}(y_2)\preceq{\bf{\Phi}}(\beta_1 y_1+\beta_2 y_2). \end{align*}
		\end{definition}
	\begin{lemma}\label{cr1}
	If $\bf{\Phi}$ is a concave IVF on a convex set $\mathcal{Y} \subseteq \mathbb{R}^n$, then $\underline{\phi}$
		and $\overline{\phi}$ are concave on $\mathcal{Y}$ and vice-versa.
	\end{lemma}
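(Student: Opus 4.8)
The plan is to reduce the interval concavity inequality to a pair of scalar inequalities---one for each endpoint function---by directly computing both sides of the defining relation and then reading off the dominance condition componentwise.

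First I would evaluate the left-hand side of the concavity inequality. Since $\beta_1, \beta_2 \in [0,1]$ are nonnegative, the scalar multiplication $\odot$ simplifies: in the $\min$/$\max$ formula the extrema are attained at the lower and upper endpoints respectively, so that $\beta_i \odot {\bf{\Phi}}(y_i) = [\beta_i \underline{\phi}(y_i),\, \beta_i \overline{\phi}(y_i)]$ for $i = 1, 2$. Applying the addition rule $\oplus$ then expresses the left-hand side as the single interval $[\beta_1 \underline{\phi}(y_1) + \beta_2 \underline{\phi}(y_2),\ \beta_1 \overline{\phi}(y_1) + \beta_2 \overline{\phi}(y_2)]$, while the right-hand side is simply $[\underline{\phi}(\beta_1 y_1 + \beta_2 y_2),\ \overline{\phi}(\beta_1 y_1 + \beta_2 y_2)]$.

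Next I would unfold the dominance relation $\preceq$ from Definition \ref{interval_dominance}. By that definition, the interval inequality holds exactly when both endpoint inequalities hold simultaneously, namely $\beta_1 \underline{\phi}(y_1) + \beta_2 \underline{\phi}(y_2) \leq \underline{\phi}(\beta_1 y_1 + \beta_2 y_2)$ and $\beta_1 \overline{\phi}(y_1) + \beta_2 \overline{\phi}(y_2) \leq \overline{\phi}(\beta_1 y_1 + \beta_2 y_2)$. These two scalar inequalities are precisely the statements that $\underline{\phi}$ and $\overline{\phi}$ are concave on $\mathcal{Y}$. Because every link in this chain is an equivalence valid for all $y_1, y_2 \in \mathcal{Y}$ and all admissible $\beta_1, \beta_2$, the forward implication and its converse follow at once, which establishes the claimed ``and vice-versa''.

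I do not anticipate a serious obstacle; the argument is essentially a bookkeeping exercise once the two sides are computed. The single point that genuinely needs justifying is the collapse of the $\min$/$\max$ in the scalar product, which relies on the nonnegativity of $\beta_1, \beta_2$ together with $\underline{\phi} \leq \overline{\phi}$. Were the coefficients permitted to be negative, the endpoints would interchange and the clean componentwise split---and hence the equivalence---would break down; so I would state this nonnegativity explicitly at the step where it is used.
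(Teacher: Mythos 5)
Your proof is correct and is, in substance, exactly the argument the paper relies on: the paper does not write out its own proof but instead cites Proposition 6.1 of \cite{wu2007karush}, whose proof is the same componentwise reduction you perform (nonnegativity of $\beta_1,\beta_2$ collapses the $\min$/$\max$ in $\odot$, and the dominance relation $\preceq$ splits into the two endpoint inequalities, with every step an equivalence so that both directions follow). Nothing needs fixing; your remark that the collapse of the scalar product is the one point requiring the nonnegativity of the coefficients is precisely the right subtlety to flag.
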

\begin{proof}
The proof is similar with the proof of Proposition 6.1 in \cite{wu2007karush}.
\end{proof}	
\begin{example}
 Let $\mathcal Y$ be the Euclidean space $\mathbb R^n$. Then, the IVF $\bf{\Phi}: \mathcal Y \to I(\mathbb R)$ which is defined by
 \begin{align*}
     {{\bf{\Phi}}}(y)= \widehat{\textbf M}^\top \odot y \ominus_{gH} \lVert y \rVert, ~\text{where}~ \widehat{\textbf M}=(\textbf M_1, \textbf M_2,\ldots, \textbf M_{n})\in I(\mathbb R)^n,
     \end{align*}
     \\ and for all~$ y =(y_1, y_2,\ldots, y_n)\in \mathcal Y$ is a concave IVF on $\mathcal Y$. The reason is as follows.\\

     Without loss of generality, first $p$ components of $y$ are assumed to be non-negative and rest $n-p$ be negative. Then, letting $\textbf M_{i}= [\underline m_i, \overline m_i]$ for all $i= 1,2,\ldots,n$,
      \begin{align*}
     {{\bf{\Phi}}}(y)= \bigoplus_{i=1}^p ~[\underline m_i y_i,\overline m_iy_i ]\oplus \bigoplus_{j=p+1}^n[\overline m_j y_j,\underline m_jy_j ]\ominus_{gH} \lVert y \rVert.
     \end{align*}
It is evident that $\sum _{i=1}^p \underline m_{i} y_{i}+ \sum_{j=p+1}^n \overline m_{j} y_{j}$ and $ \sum _{i=1}^p \overline m_{i} y_{i}+ \sum_{j=p+1}^n \underline m_{j} y_{j}$, being linear, are concave functions. Also, $- \lVert y \rVert$ is a concave function. Therefore,
 $\sum _{i=1}^p \underline m_{i} y_{i}+ \sum_{j=p+1}^n \overline m_{j} y_{j}-\lVert y \rVert$ and $ \sum _{i=1}^p \overline m_{i} y_{i}+ \sum_{j=p+1}^n \underline m_{j} y_{j}-\lVert y\rVert$    are  concave functions.  Hence, by Lemma \ref{cr1}, ${\bf{\Phi}}$ is a concave IVF.
 \end{example}

\begin{definition}(\emph{$gH$-continuity} \cite{ Ghosh2016newton}).
An IVF ${\bf{\Phi}}$ is said to be $gH$-\emph{continuous}  at $u \in \mathcal{Y}$ if
		\[
		\lim_{\lVert d \rVert\rightarrow 0}\left({\bf{\Phi}}(u+d)\ominus_{gH}{\bf{\Phi}}(u)\right)=\textbf{0}.\]
	 If at every $u \in \mathcal{Y}$, ${\bf{\Phi}}$ is $gH$-continuous, then ${\bf{\Phi}}$ is called $gH$-\emph{continuous} on $\mathcal{Y}$.
\end{definition}


	\begin{lemma}\emph{(See \label{lc2}\cite{Ghosh2019gradient}).}
	 For a $gH$-continuous IVF ${\bf{\Phi}}$ its
		$\underline{\phi}$ and $\overline{\phi}$ are continuous and vice-versa.
	\end{lemma}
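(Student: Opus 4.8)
The plan is to unpack the definition of $gH$-continuity using the explicit endpoint formula for the $gH$-difference recorded in Definition \ref{gh_difference}, and then reduce the convergence of an interval to the simultaneous convergence of its two endpoints. Concretely, for a displacement $d$ with $\lVert d \rVert$ small, I would abbreviate the two real increments by $a(d)=\underline{\phi}(u+d)-\underline{\phi}(u)$ and $b(d)=\overline{\phi}(u+d)-\overline{\phi}(u)$. Applying the formula $\textbf{P} \ominus_{gH} \textbf{Q} = \left[\min\{\underline{p}-\underline{q}, \overline{p}-\overline{q}\},\ \max\{\underline{p}-\underline{q}, \overline{p}-\overline{q}\}\right]$ to $\textbf{P}={\bf{\Phi}}(u+d)$ and $\textbf{Q}={\bf{\Phi}}(u)$ gives
\[
{\bf{\Phi}}(u+d)\ominus_{gH}{\bf{\Phi}}(u)=\left[\min\{a(d),b(d)\},\ \max\{a(d),b(d)\}\right].
\]
So $gH$-continuity of ${\bf{\Phi}}$ at $u$ is, by definition, exactly the statement that both $\min\{a(d),b(d)\}\to 0$ and $\max\{a(d),b(d)\}\to 0$ as $\lVert d\rVert\to 0$.

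Next I would prove the elementary equivalence that the interval $\left[\min\{a,b\},\max\{a,b\}\right]$ converges to $\textbf{0}$ if and only if $a\to 0$ and $b\to 0$. The forward direction uses the sandwich inequalities $\min\{a,b\}\le a\le\max\{a,b\}$ and $\min\{a,b\}\le b\le\max\{a,b\}$: if both the minimum and the maximum tend to $0$, the squeeze theorem forces $a\to 0$ and $b\to 0$. The reverse direction is immediate, since the maps $(a,b)\mapsto\min\{a,b\}$ and $(a,b)\mapsto\max\{a,b\}$ are continuous and vanish at $(0,0)$. Combining this equivalence with the computation above shows that ${\bf{\Phi}}$ is $gH$-continuous at $u$ precisely when $a(d)\to 0$ and $b(d)\to 0$, i.e. when $\underline{\phi}$ and $\overline{\phi}$ are continuous at $u$.

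Finally I would promote the pointwise statement to the global one: since $u\in\mathcal{Y}$ was arbitrary, $gH$-continuity of ${\bf{\Phi}}$ on all of $\mathcal{Y}$ is equivalent to continuity of both $\underline{\phi}$ and $\overline{\phi}$ on $\mathcal{Y}$, which is the ``and vice-versa'' assertion. I do not anticipate a genuine obstacle here; the only point requiring a little care is the handling of the $\min$/$\max$ in the $gH$-difference, namely recognizing that convergence of the two endpoints is equivalent to convergence of the raw increments $a(d)$ and $b(d)$ even though these need not individually coincide with the lower and upper endpoints of the difference interval. The squeeze argument cleanly disposes of this subtlety, and the result then follows exactly in the spirit of Lemma \ref{cr1}.
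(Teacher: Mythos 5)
Your proof is correct and is the standard argument: with the paper's norm on $I(\mathbb{R})$, one has $\lVert {\bf{\Phi}}(u+d)\ominus_{gH}{\bf{\Phi}}(u)\rVert_{I(\mathbb{R})}=\max\{\lvert \min\{a(d),b(d)\}\rvert,\ \lvert \max\{a(d),b(d)\}\rvert\}=\max\{\lvert a(d)\rvert,\lvert b(d)\rvert\}$, so $gH$-continuity at $u$ is equivalent to $a(d)\to 0$ and $b(d)\to 0$, which is exactly what your squeeze argument establishes. Note that the paper itself supplies no proof of this lemma (it only cites \cite{Ghosh2019gradient}), so there is nothing internal to compare against; your self-contained endpoint-unpacking argument is the natural route and is precisely the kind of proof the cited reference gives.
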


	\begin{definition}(\emph{$gH$-derivative} \cite{chalco2012}). \label{derivative}
	Let $\mathcal{Y}\subseteq \mathbb{R}^n$. The \emph{$gH$-derivative} of an IVF ${\bf{\Phi}}: \mathcal{Y} \to I(\mathbb{R})$ at $u \in \mathcal{Y}$ is the limit
	\[
	{\bf{\Phi}}'(u) : =\underset{d \rightarrow 0} \lim ~\tfrac{1}{d} \odot\left\{{\bf{\Phi}}(u+d)\ominus_{gH}{\bf{\Phi}}(u)\right\}.\]
	\end{definition}
	\begin{definition}(\emph{$gH$-G\'ateaux derivative} \cite{Ghosh2019derivative}). Let an IVF ${\bf{\Phi}}$ be defined on
	  a nonempty open subset $\mathcal{Y}$ of $\mathbb{R}^n$. Then, ${\bf{\Phi}}$ is known to be \emph{$gH$-G\'ateaux differentiable} with  \emph{$gH$-G\'ateaux derivative} ${\bf{\Phi}}_{\mathscr G}(u)$ at $u \in \mathcal{Y}$ if the following limit
	\[{\bf{\Phi}}_{\mathscr G}(u)(h):= \lim\limits_{\beta \to 0+}\frac{1}{\beta} \odot({\bf{\Phi}}(u+\beta h)\ominus_{gH}{\bf{\Phi}}(u))\]
	  is finite for all $h \in \mathbb{R}^n$ and ${\bf{\Phi}}_{\mathscr G}(u)$ is a $gH$-continuous and linear IVF from $\mathbb{R}^n$ to $I(\mathbb{R})$.
 	\end{definition}
	\begin{definition}(\emph{gH-Fr\'echet derivative} \cite{Ghosh2019derivative}). Let an IVF ${\bf{\Phi}}$ be defined on
	  a nonempty open subset $\mathcal{Y}$ of $\mathbb{R}^n$. Then,  ${\bf{\Phi}}$ is said to be \emph{$gH$-Fr\'echet differentiable} 
	  at $u \in \mathcal{Y}$   if there exists a $gH$-continuous and linear mapping  $\textbf G: \mathcal Y\rightarrow I(\mathbb{R})$ such that
	\[ \lim_{\substack{%
	\lVert h\rVert \to 0}}\tfrac{1}{\lVert h \rVert} \odot {\left( \lVert{\bf{\Phi}} (u+h)\ominus_{gH} {\bf{\Phi}}(u) \ominus_{gH} \textbf G(h)\rVert_{I(\mathbb{R})}\right)}=0,\]
	 where $\textbf G$ will be referred to as ${\bf{\Phi}}_{\mathscr F}(u)$.
	\end{definition}	

\begin{definition}\label{efficient_point_def}(\emph{Efficient point} \cite{Ghosh2019derivative}).
		Let $\mathcal{Y} \subseteq \mathbb{R}^n$ and ${\bf{\Phi}}: \mathbb{R}^n \rightarrow I(\mathbb{R})$ be an IVF. A point $u\in \mathcal{Y}$ is said to be an \emph{efficient point} of the IVF ${\bf{\Phi}}: \mathcal Y \rightarrow I(\mathbb{R})$
		if ${{\bf{\Phi}}}(y)\nprec{\bf{\Phi}}(u)$ for all $y\in \mathcal{Y}.$
	\end{definition}

\begin{definition}\label{Weak efficient_point_def}(\emph{Weak efficient point} \cite{anshika}).
Let $\mathcal{Y} \subseteq \mathbb{R}^n$ and ${\bf{\Phi}}: \mathbb{R}^n \rightarrow I(\mathbb{R})$ be an IVF. A point $u\in \mathcal{Y}$ is said to be a\emph{ weak efficient point} of the IVF ${\bf{\Phi}}: \mathcal Y \rightarrow I(\mathbb{R})$
		if ${\bf{\Phi}}(u)\preceq{\bf{\Phi}}({y})$ for all $y\in \mathcal{Y}.$
	\end{definition}
\subsection{Few properties of the elements in $I(\mathbb{R})$}

Let $\textbf Y=[\underline y, \overline y]$ and $\widehat{\textbf Y}=(\textbf Y_{1}, \textbf Y_{2}, \ldots, \textbf Y_{n})$ be  elements in $I(\mathbb{R})$ and $I(\mathbb{R})^n$, respectively. The following two functions $\lVert \cdot\rVert_{I(\mathbb{R})}: I(\mathbb{R}) \rightarrow \mathbb{R}_{+}$ and $\lVert \cdot\rVert_{I(\mathbb{R})^n}: I(\mathbb{R})^n \rightarrow \mathbb{R}_{+}$ are referred to as norm \cite{Moore1966,Moore1979} on $I(\mathbb{R})$ and $I(\mathbb{R})^n$, respectively:
\[ \lVert \textbf Y\rVert_{I(\mathbb{R})}= \max\{\lvert \underline y\rvert, \lvert \overline y\rvert\},~\text{and}~\lVert \widehat{\textbf Y} \rVert_{I(\mathbb{R})^n} =\sum\limits_{j=1}^{n} \lVert \textbf Y_{j} \rVert_{I(\mathbb{R})}.
\]






\begin{lemma} \label{dr2}
For any $\textbf{W}, \textbf{Y}, \textbf{Z} \in I(\mathbb{R})$ and $ \epsilon \geq 0$, we have
\[ \epsilon \preceq (\textbf{W} \ominus_{gH} \textbf{Y}) \ominus_{gH} \textbf{Z}  \implies \textbf Z \oplus \epsilon \preceq\textbf{W} \ominus_{gH} \textbf{Y}. \]
\end{lemma}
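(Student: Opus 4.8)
The plan is to reduce everything to scalar inequalities on interval endpoints and observe that the hypothesis and the conclusion translate into the same pair of inequalities. First I would abbreviate $\textbf{A} := \textbf{W} \ominus_{gH} \textbf{Y}$ and write $\textbf{A} = [\underline{a}, \overline{a}]$, $\textbf{Z} = [\underline{z}, \overline{z}]$; the nonnegative scalar $\epsilon$ is read throughout as the degenerate interval $[\epsilon, \epsilon]$, so that the symbol $\preceq$ in the statement is exactly the dominance relation of Definition \ref{interval_dominance}. Since the concrete form of $\underline{a}, \overline{a}$ in terms of $\textbf{W}, \textbf{Y}$ plays no role, I would keep $\textbf{A}$ as a generic interval; in fact the argument shows the stronger fact that the implication is an equivalence for any $\textbf{A} \in I(\mathbb{R})$.

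Next I would apply the closed form of the $gH$-difference from Definition \ref{gh_difference} to get $\textbf{A} \ominus_{gH} \textbf{Z} = [\min\{\underline{a} - \underline{z}, \overline{a} - \overline{z}\}, \max\{\underline{a} - \underline{z}, \overline{a} - \overline{z}\}]$. By Definition \ref{interval_dominance}, the hypothesis $[\epsilon, \epsilon] \preceq \textbf{A} \ominus_{gH} \textbf{Z}$ requires $\epsilon$ to be at most both endpoints of the right-hand interval; because the lower endpoint is the smaller of the two, this collapses to the single requirement $\epsilon \leq \min\{\underline{a} - \underline{z}, \overline{a} - \overline{z}\}$, i.e. to the pair of scalar inequalities $\epsilon \leq \underline{a} - \underline{z}$ and $\epsilon \leq \overline{a} - \overline{z}$.

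Finally I would unfold the conclusion. Using Moore's interval addition, $\textbf{Z} \oplus \epsilon = [\underline{z} + \epsilon, \overline{z} + \epsilon]$, so by Definition \ref{interval_dominance} the desired conclusion $\textbf{Z} \oplus \epsilon \preceq \textbf{A}$ amounts precisely to $\underline{z} + \epsilon \leq \underline{a}$ and $\overline{z} + \epsilon \leq \overline{a}$, which is exactly the pair of inequalities extracted from the hypothesis. Hence the implication follows immediately (and is in fact reversible). The only point requiring a little care---and the nearest thing to an obstacle---is the bookkeeping of the $\min$/$\max$ in $\textbf{A} \ominus_{gH} \textbf{Z}$: one must notice that dominating a $gH$-difference from below by a degenerate interval forces \emph{both} candidate endpoint differences to exceed $\epsilon$, not merely their minimum; everything else is a direct translation between the dominance relation and the usual order on $\mathbb{R}$.
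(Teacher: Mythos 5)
Your proof is correct, and it is organized more cleanly than the paper's own argument. Both proofs ultimately rest on the same idea---unfold the $gH$-difference via its $\min$/$\max$ closed form and translate the dominance relation $\preceq$ into endpoint inequalities---but the paper never abstracts the inner difference: it keeps $(\textbf{W}\ominus_{gH}\textbf{Y})\ominus_{gH}\textbf{Z}$ written out in terms of $\underline w,\overline w,\underline y,\overline y,\underline z,\overline z$ and runs a four-case analysis over the possible representations of the iterated difference (lower/upper differences aligned or crossed), verifying
\[
[\underline z+\epsilon,\,\overline z+\epsilon]\preceq\bigl[\min\{\underline w-\underline y,\overline w-\overline y\},\,\max\{\underline w-\underline y,\overline w-\overline y\}\bigr]
\]
separately in each case. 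By naming $\textbf{A}:=\textbf{W}\ominus_{gH}\textbf{Y}$ once and treating it as a generic interval, you collapse all four of those cases into a single computation in which the hypothesis and the conclusion each reduce to the identical pair of scalar inequalities $\epsilon\le\underline a-\underline z$ and $\epsilon\le\overline a-\overline z$. This buys two things the paper's proof leaves invisible: the implication is actually an equivalence, and it holds with an arbitrary interval in place of $\textbf{W}\ominus_{gH}\textbf{Y}$. One stylistic remark: your closing caveat about needing \emph{both} endpoint differences to exceed $\epsilon$ ``not merely their minimum'' is a non-issue, since bounding the minimum below by $\epsilon$ is the same thing as bounding both differences; the step is correct either way, just not a genuine obstacle.
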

\begin{proof}
See Appendix \ref{appendix_ind}.
\end{proof}
\begin{lemma} \label{gh2}
For any $\textbf X, \textbf Y, \textbf Z, \textbf W \in I(\mathbb{R})$, we have
\begin{align*}
(\textbf{X} \oplus \textbf Y) \ominus_{gH} (\textbf Z\oplus \textbf W) \subseteq  (\textbf X\ominus_{gH} \textbf Z) \oplus (\textbf Y \ominus_{gH} \textbf {W}).
\end{align*}
\end{lemma}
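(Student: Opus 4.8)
The plan is to prove the set inclusion by passing to endpoints and reducing it to two elementary scalar inequalities. Write $\textbf X = [\underline x, \overline x]$, $\textbf Y = [\underline y, \overline y]$, $\textbf Z = [\underline z, \overline z]$, and $\textbf W = [\underline w, \overline w]$, and abbreviate $p = \underline x - \underline z$, $q = \overline x - \overline z$, $r = \underline y - \underline w$, $s = \overline y - \overline w$.

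For the left-hand side, I would first carry out the $\oplus$ operations, obtaining $\textbf X \oplus \textbf Y = [\underline x + \underline y, \overline x + \overline y]$ and $\textbf Z \oplus \textbf W = [\underline z + \underline w, \overline z + \overline w]$, and then apply the endpoint formula for $\ominus_{gH}$ from Definition \ref{gh_difference}. This yields
\[
(\textbf X \oplus \textbf Y) \ominus_{gH} (\textbf Z \oplus \textbf W) = \bigl[\min\{p + r,\, q + s\},\ \max\{p + r,\, q + s\}\bigr].
\]
For the right-hand side, the same endpoint formula gives $\textbf X \ominus_{gH} \textbf Z = [\min\{p, q\}, \max\{p, q\}]$ and $\textbf Y \ominus_{gH} \textbf W = [\min\{r, s\}, \max\{r, s\}]$, and adding these via $\oplus$ produces
\[
(\textbf X \ominus_{gH} \textbf Z) \oplus (\textbf Y \ominus_{gH} \textbf W) = \bigl[\min\{p,q\} + \min\{r,s\},\ \max\{p,q\} + \max\{r,s\}\bigr].
\]

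With both sides written as explicit intervals, I would invoke the elementary criterion that $[c, d] \subseteq [e, f]$ iff $e \le c$ and $d \le f$. The inclusion therefore reduces to the pair of inequalities
\[
\min\{p,q\} + \min\{r,s\} \le \min\{p + r, q + s\} \quad\text{and}\quad \max\{p + r, q + s\} \le \max\{p,q\} + \max\{r,s\}.
\]
Both are instances of the superadditivity of $\min$ and the subadditivity of $\max$: since $\min\{p,q\} + \min\{r,s\}$ is a common lower bound of $p + r$ and of $q + s$, it is at most their minimum, and dually $\max\{p,q\} + \max\{r,s\}$ is a common upper bound of $p + r$ and of $q + s$, hence at least their maximum.

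The computation is wholly routine; the only point that requires care---and the reason the relation is an inclusion rather than an equality---is that the inner $\min$/$\max$ in the $gH$-difference may reorder the endpoints, so one cannot simply subtract corresponding endpoints on the two sides. Carrying the $\min$/$\max$ intact throughout is precisely what turns the claim into the two scalar inequalities above; equality would require $p,q$ and $r,s$ to be ordered compatibly, which need not hold in general.
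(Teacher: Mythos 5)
Your proof is correct and follows essentially the same route as the paper's: both compute the two sides explicitly via the endpoint formula for $\ominus_{gH}$ and then reduce the inclusion to the superadditivity of $\min$ and subadditivity of $\max$ (the paper's inequalities are exactly your two scalar inequalities, written without the $p,q,r,s$ abbreviations). Your version is a clean, slightly more streamlined presentation of the same argument.
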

\begin{proof}
See Appendix \ref{appendix_inf}.
\end{proof}
 \begin{lemma} \label{ldr1}
For any $\textbf{W}, \textbf{Y}, \textbf{Z} \in I(\mathbb{R})$,
\[ \textbf 0\ominus_{gH} \{((-1 \odot \textbf{W}) \ominus_{gH} (-1 \odot \textbf{Y})) \ominus_{gH} (-1 \odot \textbf{Z})\}= ((\textbf{W} \ominus_{gH} \textbf{Y}) \ominus_{gH} \textbf{Z}).
\]			
\end{lemma}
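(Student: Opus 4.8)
The plan is to reduce everything to the endpoint formula for $\ominus_{gH}$ from Definition \ref{gh_difference} and to exploit how scalar multiplication by $-1$ interacts with it. First I would record two elementary identities that drive the whole computation. For any $\textbf A=[\underline a,\overline a]\in I(\mathbb R)$, the definition of $\ominus_{gH}$ gives
\[
\textbf 0\ominus_{gH}\textbf A=[\min\{-\underline a,-\overline a\},\max\{-\underline a,-\overline a\}]=[-\overline a,-\underline a]=(-1)\odot\textbf A,
\]
so that $gH$-subtracting from $\textbf 0$ is the same as negation. Second, negation distributes over the $gH$-difference:
\[
(-1)\odot(\textbf A\ominus_{gH}\textbf B)=((-1)\odot\textbf A)\ominus_{gH}((-1)\odot\textbf B)\quad\text{for all }\textbf A,\textbf B\in I(\mathbb R).
\]

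I would verify this second identity directly from the endpoint formula. Writing $s=\underline a-\underline b$ and $t=\overline a-\overline b$, the left-hand side is $(-1)\odot[\min\{s,t\},\max\{s,t\}]=[-\max\{s,t\},-\min\{s,t\}]$, while the right-hand side, using $(-1)\odot\textbf A=[-\overline a,-\underline a]$ and the same formula, is $[\min\{-t,-s\},\max\{-t,-s\}]$. These coincide because $-\max\{s,t\}=\min\{-s,-t\}$ and $-\min\{s,t\}=\max\{-s,-t\}$; this sign interchange inside the $\min/\max$ is the only genuine content of the argument, and hence the step I expect to be the crux. A companion fact I will use is the involution $(-1)\odot((-1)\odot\textbf X)=\textbf X$, which is immediate from $[-\overline x,-\underline x]\mapsto[\underline x,\overline x]$.

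With these in hand the lemma is a short chain of rewrites. Set $\textbf A=((-1)\odot\textbf W)\ominus_{gH}((-1)\odot\textbf Y)$. By the first identity, the left-hand side of the claim equals $(-1)\odot\big(\textbf A\ominus_{gH}((-1)\odot\textbf Z)\big)$. Applying the distributivity identity once peels off $\textbf Z$: the expression becomes $((-1)\odot\textbf A)\ominus_{gH}\big((-1)\odot((-1)\odot\textbf Z)\big)$, and the involution collapses the last term to $\textbf Z$. Finally, applying distributivity a second time to $(-1)\odot\textbf A=(-1)\odot\big(((-1)\odot\textbf W)\ominus_{gH}((-1)\odot\textbf Y)\big)$ and using the involution on each summand yields $\textbf W\ominus_{gH}\textbf Y$. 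Thus the whole expression reduces to $(\textbf W\ominus_{gH}\textbf Y)\ominus_{gH}\textbf Z$, which is exactly the right-hand side. An alternative, more pedestrian route would bypass the two identities and simply expand both sides in terms of $\underline w,\overline w,\underline y,\overline y,\underline z,\overline z$ and compare, but that forces a case analysis on the signs of the various differences, whereas the identity-based approach isolates the essential sign-interchange fact cleanly.
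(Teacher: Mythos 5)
Your proof is correct, but it takes a genuinely different route from the paper. The paper's proof (Appendix C) is a brute-force endpoint expansion: it lists the four possible representations of $((-1\odot\textbf W)\ominus_{gH}(-1\odot\textbf Y))\ominus_{gH}(-1\odot\textbf Z)$, computes $\textbf 0\ominus_{gH}$ of each, lists the four possible representations of $(\textbf W\ominus_{gH}\textbf Y)\ominus_{gH}\textbf Z$, and observes that the resulting interval expressions coincide pairwise. Your argument instead isolates two unconditional identities --- $\textbf 0\ominus_{gH}\textbf A=(-1)\odot\textbf A$ and $(-1)\odot(\textbf A\ominus_{gH}\textbf B)=((-1)\odot\textbf A)\ominus_{gH}((-1)\odot\textbf B)$ --- plus the involution $(-1)\odot((-1)\odot\textbf X)=\textbf X$, and then finishes by pure rewriting. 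This buys two things. First, it eliminates case analysis entirely: the sign interchange $-\max\{s,t\}=\min\{-s,-t\}$ holds identically, so no branching on which endpoint ordering occurs is ever needed. Second, it repairs a gap the paper glosses over: the paper's case-matching is only valid because the case that occurs on the left-hand side occurs under exactly the same endpoint-order condition as the corresponding case on the right-hand side, and the paper never verifies this correspondence --- your distributivity identity is precisely the fact that guarantees it. The identities you prove (negation commutes with $\ominus_{gH}$) are also reusable elsewhere in this kind of interval calculus, e.g., they immediately streamline Lemma \ref{fsg}, whereas the paper's enumeration proves only the single statement at hand.
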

\begin{proof}
See Appendix \ref{appendix_inq}.

\end{proof}

\begin{lemma}\label{yure}
For all $\textbf{X},\textbf{Y},~\text{and}~\textbf{Z}~\text{of}~ I(\mathbb{R})$,
\begin{enumerate}[(i)]
    \item \label{3_1} if $\textbf 0 \preceq  \textbf{X} \ominus_{gH}\textbf{Y}$, then $\textbf 0 \ominus_{gH} \textbf{Z} \preceq (\textbf{X} \ominus_{gH} \textbf{Y})\ominus_{gH} \textbf{Z}$,
    \item \label{cv} if $\textbf{Z} \preceq  \textbf{X} \ominus_{gH} \textbf{Y}$, then $\textbf{Z} \ominus_{gH} \textbf{W} \preceq (\textbf{X}\ominus_{gH} \textbf{Y})\ominus_{gH} \textbf{W}$, for all $\textbf W \in I(\mathbb R)$,
   \item if $\textbf X \ominus_{gH} \textbf Y \preceq [L,L]$, then $[-L,-L] \preceq \textbf Y \ominus_{gH} \textbf X$, where $ L \in \mathbb{R}$\label{dbv},
   \item if $[-\gamma,-\gamma] \preceq \textbf X \ominus_{gH} \textbf Y$, then $\textbf Y \ominus_{gH} [\gamma,\gamma] \preceq \textbf X$, where $\gamma \in \mathbb{R}$\label{fds}, and
   \item if $\textbf Z \preceq \textbf X \oplus \textbf Y $, then $\textbf Z \ominus_{gH} \textbf Y \preceq \textbf X$\label{ewn}.
\end{enumerate}
 \end{lemma}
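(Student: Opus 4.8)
The plan is to prove all five parts by the same elementary device: translate every interval relation into a pair of scalar inequalities on endpoints, using the explicit formula
\[
\textbf{P} \ominus_{gH} \textbf{Q} = \left[\min\{\underline{p}-\underline{q},\ \overline{p}-\overline{q}\},\ \max\{\underline{p}-\underline{q},\ \overline{p}-\overline{q}\}\right]
\]
from Definition \ref{gh_difference} together with the characterization $\textbf{Z} \preceq \textbf{W} \iff \underline{z} \leq \underline{w}$ and $\overline{z} \leq \overline{w}$ from Definition \ref{interval_dominance}. Two auxiliary observations will carry most of the load. First, the monotonicity of $\min$ and $\max$: if $a_i \leq b_i$ for $i = 1, 2$, then $\min\{a_1,a_2\} \leq \min\{b_1,b_2\}$ and $\max\{a_1,a_2\} \leq \max\{b_1,b_2\}$. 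Second, a reflection identity read off from the formula above, namely that $\textbf{X} \ominus_{gH} \textbf{Y} = [a,b]$ forces $\textbf{Y} \ominus_{gH} \textbf{X} = [-b,-a]$, since $\min\{-s,-t\} = -\max\{s,t\}$ and $\max\{-s,-t\} = -\min\{s,t\}$.

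For (i), I would note that the hypothesis $\textbf{0} \preceq \textbf{X} \ominus_{gH} \textbf{Y}$ forces $\min\{\underline{x}-\underline{y},\ \overline{x}-\overline{y}\} \geq 0$, hence both $\underline{x} \geq \underline{y}$ and $\overline{x} \geq \overline{y}$; writing $\textbf{W} = \textbf{X} \ominus_{gH} \textbf{Y} = [\underline{w},\overline{w}]$ gives $\underline{w},\overline{w} \geq 0$. Since $\textbf{0} \ominus_{gH} \textbf{Z} = [-\overline{z},-\underline{z}]$, the claim reduces to $-\overline{z} \leq \min\{\underline{w}-\underline{z},\ \overline{w}-\overline{z}\}$ and $-\underline{z} \leq \max\{\underline{w}-\underline{z},\ \overline{w}-\overline{z}\}$, both of which follow at once from $\underline{w},\overline{w} \geq 0$ and $\underline{z} \leq \overline{z}$. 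For (ii), with $\textbf{U} = \textbf{X} \ominus_{gH} \textbf{Y}$ the hypothesis gives $\underline{z} \leq \underline{u}$ and $\overline{z} \leq \overline{u}$, whence $\underline{z}-\underline{w} \leq \underline{u}-\underline{w}$ and $\overline{z}-\overline{w} \leq \overline{u}-\overline{w}$; applying the monotonicity of $\min$ and $\max$ to this paired inequality yields precisely $\textbf{Z} \ominus_{gH} \textbf{W} \preceq \textbf{U} \ominus_{gH} \textbf{W}$.

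Parts (iii)--(v) collapse to degenerate-interval simplifications. In (iii), writing $\textbf{X} \ominus_{gH} \textbf{Y} = [a,b]$, the hypothesis $[a,b] \preceq [L,L]$ reads $a \leq L$ and $b \leq L$, while the reflection identity gives $\textbf{Y} \ominus_{gH} \textbf{X} = [-b,-a]$, so the conclusion $[-L,-L] \preceq [-b,-a]$ reads $-L \leq -b$ and $-L \leq -a$, which is the hypothesis again. In (iv), since $[\gamma,\gamma]$ is degenerate one has $\textbf{Y} \ominus_{gH} [\gamma,\gamma] = [\underline{y}-\gamma,\ \overline{y}-\gamma]$, and $[-\gamma,-\gamma] \preceq \textbf{X} \ominus_{gH} \textbf{Y}$ yields $\underline{y}-\gamma \leq \underline{x}$ and $\overline{y}-\gamma \leq \overline{x}$, which is the claim. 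In (v), $\textbf{Z} \preceq \textbf{X} \oplus \textbf{Y}$ gives $\underline{z}-\underline{y} \leq \underline{x}$ and $\overline{z}-\overline{y} \leq \overline{x}$; then the lower endpoint of $\textbf{Z} \ominus_{gH} \textbf{Y}$ is at most $\underline{z}-\underline{y} \leq \underline{x}$, while its upper endpoint $\max\{\underline{z}-\underline{y},\ \overline{z}-\overline{y}\} \leq \overline{x}$ since both arguments are at most $\overline{x}$.

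No step will pose a genuine obstacle; the only point demanding care is the $\min$/$\max$ inside the $gH$-difference, which must be tracked so that the correct endpoints are compared. The degenerate-interval cases (iii)--(iv) and the reflection identity eliminate the $\min$/$\max$ outright, whereas in (ii) and (v) the monotonicity observation disposes of it cleanly.
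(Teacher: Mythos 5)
Your proof is correct, and at bottom it rests on the same elementary device as the paper's: unpack every relation into endpoint inequalities via the explicit formula for $\ominus_{gH}$ and the componentwise meaning of $\preceq$. The organization, however, is genuinely different. The paper proves each part by enumerating the possible \emph{representations} of the nested difference --- e.g.\ for (i) and (ii) it lists four candidate forms of $(\textbf{X}\ominus_{gH}\textbf{Y})\ominus_{gH}\textbf{Z}$ according to which endpoints attain the $\min$ and the $\max$, verifies the dominance in one or two of these cases, and dismisses the remaining ones as similar. You never split into cases: you keep the $\min$/$\max$ symbolic and rely on two small observations, namely the monotonicity of $\min$ and $\max$ under componentwise inequalities (which settles (ii) and (v) in one line each) and the reflection identity $\textbf{X}\ominus_{gH}\textbf{Y}=[a,b]\Rightarrow\textbf{Y}\ominus_{gH}\textbf{X}=[-b,-a]$ (which makes (iii) a restatement of its own hypothesis). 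What your route buys is uniformity and safety: there is no risk of an unhandled configuration, and it forces $\textbf{0}\ominus_{gH}\textbf{Z}$ to be written correctly as $[-\overline{z},-\underline{z}]$ --- precisely the point where the paper's Case 1 of part (i) is careless, since it writes the lower bound as $[0-\underline{z},0-\overline{z}]$, whose endpoints appear in reversed order, and the displayed pair of scalar inequalities there is not literally the pair required for the stated dominance (your argument supplies the missing bridge, using $\underline{w}\geq 0$ and $\underline{z}\leq\overline{z}$). What the paper's enumeration buys is only concreteness: every interval is displayed explicitly in each configuration, at the cost of length and of the ``similarly'' gaps.
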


 \begin{proof}
 See Appendix \ref{appendix_ins}.

 \end{proof}
	
\begin{definition}{(\emph{Sequence in $I(\mathbb{R})^n$}}\cite{Ghosh2020lasso}). A function $\widehat{{\bf{\Phi}}}: \mathbb N \to I(\mathbb R)^n$ is called a sequence in $I(\mathbb R)^n$, where $\mathbb{N}$ is the set of natural numbers.
\end{definition}

\begin{definition}{(\emph{Closed set in $I(\mathbb{R})^n$}}\cite{anshika}). \label{closedset}
A nonempty subset $\boldsymbol{\mathcal{U}} \subseteq I(\mathbb{R})^n$ is known to be \emph{closed} if for every convergent sequence $\{\widehat{\textbf{M}}_k\}$ in $\boldsymbol{\mathcal{U}}$ converging to $ \widehat{\textbf{M}}$, $ \widehat{\textbf{M}}$ must belong to $\boldsymbol{\mathcal U}$.
\end{definition}
\begin{definition}({\emph{Closure of a set in $I(\mathbb{R})^n$}}).
Let  ${\mathcal{Y}} \subseteq I(\mathbb{R})^n$. The intersection of all closed sets containing ${\mathcal{Y}}$ is called the closure of  ${\mathcal Y}$, abbreviated by ${cl}({\mathcal{Y}})$.
\end{definition}

\begin{definition}{(\emph{Convergent sequence in $I(\mathbb{R})^n$}\cite{Ghosh2020lasso}}). Let $\{ \widehat{\textbf{M}}_{k}\}$ be a sequence in $I(\mathbb{R})^n$. If there exists $\widehat{\textbf M} \in I(\mathbb{R})^n$ for which for any $\epsilon >0$ there exists $p \in \mathbb N$ such that
\begin{align*}
  \lVert \widehat{\textbf M}_{k} \ominus_{gH} \widehat{\textbf M} \rVert_{I(\mathbb{R})^n} <  \epsilon ~\text{for all}~ k \ge p, \end{align*}
   then $\{ \widehat{\textbf{M}}_{k}\}$ is said to be convergent and converges to $\widehat{\textbf M} $.
\end{definition}
\begin{remark} \label{uparmk}
It is to note that if a sequence $\{\widehat{\textbf M}_k\} =(\textbf M_{k1}, \textbf M_{k2}, \ldots, \textbf M_{kn})^\top$ in $I(\mathbb{R})^n$ converges to $\widehat{\textbf M} = (\textbf M_1, \textbf M_2, \ldots,  \textbf M_n)^\top \in I(\mathbb R)^n$, then by the definition of norm on $I(\mathbb R)^n$, the sequence ${\textbf M}_{{kj}}$ in $I(\mathbb{R})$ converges to $\textbf{M}_{j} \in I(\mathbb{R})$ for all  $j= 1,2,\ldots,n$. Also, according to the definition of norm on $I(\mathbb R)$, the sequences $\{\underline m_{kj}\}$ and $\{\overline m_{kj}\}$ in $\mathbb R$ converge to $\{\underline m_{j}\}$ and $\{\overline m_{j}\}$, respectively, for all $j$.
\end{remark}


	\begin{definition}(\emph{Infimum and supremum of a subset of $\overline{I(\mathbb{R})}$} \label{lbound} \cite{gourav2020}). Let $\mathcal{U}\subseteq \overline{I(\mathbb{R})}$. We call an interval $\mathbf{{X}}\in I(\mathbb{R})$ a lower bound (respectively, an upper bound) of $\mathcal{U}$ if
	$\textbf{U}\in \mathcal{U} $ implies $
	\mathbf{{X}}\preceq \textbf{U}$ (respectively, $\mathbf{{U}} \preceq \textbf{X}$). \\
A lower bound $\mathbf{{X}}$ of $\mathcal{U}$ is called \emph{infimum} of $\mathcal{U}$, denoted by $\inf\mathcal{U}$, if for any lower bound $\textbf Z$ of $\mathcal U$, $\textbf{Z}\preceq \mathbf{{X}}$. \\
An upper bound $\mathbf{{X}}$ of $\mathcal{U}$ is called \emph{supremum} of $\mathcal{U}$, denoted by $\sup\mathcal{U}$, if for any upper bound $\textbf{Z}$  of $\mathcal{U}$, $\mathbf{{X}}\preceq \textbf{Z}$.
\end{definition}


	\begin{remark} \cite{gourav2020}
		Let $\mathcal{S}=\left\{[a_\mu,b_\mu]\in \overline{I(\mathbb{R})}: \mu\in\Lambda~ \text{and}~ \Lambda~\text{being an index set}\right\}$. Then, by Definition \ref{lbound}, it follows that $ \footnotesize
		{\inf\mathcal{S}=\left[\inf\limits_{\mu\in\Lambda}a_\mu,~\inf\limits_{\mu\in\Lambda}b_\mu\right] \text{and}
		 \sup\mathcal{S}=\left[\sup\limits_{\mu\in\Lambda}a_\mu,~\sup\limits_{\mu\in\Lambda}b_\mu\right]}.$
	\end{remark}


	%

	%


\section{\textbf{$gH$-weak subdifferential calculus for IVFs}}\label{section3}	
In this section, we introduce the ideas of $gH$-weak subgradient and $gH$-weak subdifferential for IVFs. Some properties of $gH$-weak subdifferential and an inclusion for sum rule are provided. Its relation with $gH$-Fr\'echet lower subdifferential is also discussed.

\begin{definition}\label{etuhj}(\emph{$gH$-weak subdifferential}).
Let $\emptyset \not= \mathcal{Y} \subseteq \mathbb{R}^{n}$ and ${\bf{\Phi}}$ be an IVF defined on $\mathcal{Y}$. A pair $(\widehat{\textbf{G}^w}, c) \in I(\mathbb{R})^n \times \mathbb{R}_{+}$ is said to be a $gH$-weak subgradient of ${\bf{\Phi}}$ at $u \in \mathcal{Y}$ if for every $y \in \mathcal{Y}$,
\begin{align} \label{mnbvc}
 \widehat{\textbf{G}^{w}}^{\top} \odot (y-u) \ominus_{gH}  c \lVert  y-u \rVert\preceq{\bf{\Phi}}(y) \ominus_{gH} {\bf{\Phi}}(u).
\end{align}
The set of all $gH$-weak subgradients of ${\bf{\Phi}}$ at $ u \in \mathcal{Y}$, i.e.,
\[ \partial^{w} {\bf{\Phi}}(u)=\bigg\{(\widehat{\textbf{G}^{w}}, c) \in I(\mathbb{R})^n \times \mathbb{R}_{+}:\widehat{\textbf{G}^{w}}^{\top} \odot (y-u) \ominus_{gH}  c \lVert  y-u \rVert\preceq{\bf{\Phi}}(y) \ominus_{gH} {\bf{\Phi}}(u)~ \forall~ y \in \mathcal{Y}\bigg\}
\]
is said to be $gH$-weak subdifferential of ${\bf{\Phi}}$ at $u \in \mathcal{Y}$.


    \end{definition}

\begin{example}	
\label{ret}
Let an IVF ${\bf{\Phi}}: [-1,1] \rightarrow I(\mathbb{R})$ be defined by
\[ {\bf{\Phi}}(y) = \left[y^2, \lvert y \rvert\right],
 \text{where} \  y \in [-1,1].\]
\end{example}
Let us compute the $gH$-weak subdifferential  of ${\bf{\Phi}}$ at $0$  and $1$, i.e., $\partial^{w}{\bf{\Phi}}(0)$ and $ \partial^{w}{\bf{\Phi}}(1)$, respectively.
Note that
\begin{eqnarray*}
\partial^{w}{{\bf{\Phi}}}(0)&=&\left\{(\textbf{G}^{w}_{1}, c) \in I(\mathbb{R}) \times \mathbb{R}_{+}: \textbf{G}^{w}_{1} \odot y \ominus_{gH} c\lvert y \rvert\preceq \left[y^2, \lvert y \rvert\right]~ \forall~ y \in [-1,1]\right\}\\
&=&  \left\{\left([\underline {g^{w}_{1}},\overline {{g^{w}_{1}}}], c\right) \in I(\mathbb{R}) \times \mathbb{R}_{+}: [\underline {g^{w}_{1}}, \overline {g^{w}_{1}}] \odot y \ominus_{gH} c \lvert y \rvert \preceq \left[y^2,\lvert y\rvert\right] ~\forall~ y \in [-1,1]\right\},
\end{eqnarray*}
which yields the following two cases corresponding to $y \in [0,1]$ and $y \in [-1,0]$.
\begin{enumerate}[$\bullet$ \textbf{Case} 1.]
\item \label{hgdxgg}
\begin{eqnarray*}
\partial^{w}{\bf{\Phi}}(0)
&=&  \bigg\{\left([\underline {g^{w}_{1}},\overline {g^{w}_{1}}], c\right) \in I(\mathbb{R}) \times \mathbb{R}_{+}: [\underline {g^{w}_{1}}, \overline {g^{w}_{1}}] \odot y \ominus_{gH} c \lvert y \rvert \preceq \left[y^2,\lvert y\rvert\right]~ \forall~ y \in [0,1]\bigg\}\\
&=& \bigg\{\left([\underline {g^{w}_{1}},\overline {g^{w}_{1}}], c\right) \in I(\mathbb{R}) \times \mathbb{R}_{+}:  \underline {g^{w}_{1}}y-cy \leq y^2~\text{and}~  \overline {g^{w}_{1}}y-cy\leq y~ \forall~ y \in [0,1]\bigg\}\\
&=& \bigg\{\left([\underline {g^{w}_{1}},\overline {g^{w}_{1}}], c\right) \in I(\mathbb{R}) \times \mathbb{R}_{+}: \underline {g^{w}_{1}}-c \leq0 ~\text{and}~ \overline {g^{w}_{1}} -c \leq 1\bigg\}.
\end{eqnarray*}
\item \label{khdcgg} Likewise,
\[\partial^{w}{\bf{\Phi}}(0)=\bigg\{\left([\underline {g^{w}_{1}},\overline {g^{w}_{1}}], c\right) \in I(\mathbb{R}) \times \mathbb{R}_{+}:-1 \leq\underline {g^{w}_{1}}+c  ~\text{and}~~ 0 \leq\overline {g^{w}_{1}} +c \bigg\}.\]
\end{enumerate}
Hence, by combining \textbf{Case} \ref{hgdxgg} and \textbf{Case} \ref{khdcgg}, we obtain
\[\partial^w{\bf {\Phi}}(0)=\bigg\{({\textbf G}^w_{1}, c)\in I(\mathbb{R}) \times \mathbb{R}_{+}: [-1-c,-c]\preceq {\textbf {G}}^w_{1} \preceq [c, 1+c]\bigg\}.\]
Similarly,   \[\partial^w{\bf {\Phi}}(1)=\bigg\{({\textbf G}^w_{2}, c )\in I(\mathbb{R}) \times \mathbb{R}_{+}: [1-c,2-c]\preceq {\textbf {G}}^w_{2}\bigg\}.\]
\begin{remark}
To understand the geometric interpretation of the $gH$-weak subdifferential of an IVF ${\bf{\Phi}}$, let $ (\widehat{\textbf G^{w}}, c) \in \partial^{w} {\bf{\Phi}}(u)$.  This means that $(\widehat{\textbf G^w}, c) \in I(\mathbb R)^n \times \mathbb R_{+}$, for every $c \ge 0$, is a $gH$-weak subgradient of ${\bf{\Phi}}$ at $u \in \mathcal Y$ if and only if there exists a  concave and $gH$-continuous IVF $\textbf H : \mathcal Y \to I(\mathbb R)$, which is defined  by
\[
     \textbf H(y) = {\bf{\Phi}}(u) \oplus \widehat{\textbf G^w}^\top \odot (y-u) \ominus_{gH} c \lVert y-u \rVert ~\forall~ y \in \mathcal Y,\]
that satisfies
\[  (\forall~ y \in \mathcal Y) ~\textbf H(y) \preceq {{\bf{\Phi}}}(y) ~\text{and}~ \textbf H(u) = {\bf{\Phi}}(u).\]
This condition shows that $\textbf H$ must intersect ${\bf{\Phi}}$ at least at the point $(u, {\bf{\Phi}}(u))$ from bottom. Hence, it concludes that if ${\bf{\Phi}}$ is $gH$-weak subdifferentiable at $u$ and $ (\widehat{\textbf G^w}, c) \in \partial^{w} \boldsymbol {\Phi}(u) $, then the graph of IVF $\textbf H$, that is,
\begin{align*}
     Gr (\textbf H) = \{ (y, \textbf Y) \in \mathcal Y \times I(\mathbb R) : \textbf Y= \textbf H(y)\}
\end{align*}
always lie below the  epigraph of ${\bf{\Phi}}$, i.e.,
$$ \text{Epi} ({\bf{
\Phi}}) = \{(y, \textbf Y)\in  \mathcal Y \times I(\mathbb R): {{\bf{\Phi}}}(y)\preceq \textbf Y  \}
   ,$$
such that
\begin{align*}
\text{Epi}({\bf{\Phi}}) \subset \text{Epi}(\textbf H)~ \text{and}~ cl(\text{Epi} ({\bf{\Phi}})) \bigcap  Gr (\textbf H) ~\text{is nonempty}.~
\end{align*}
\end{remark}
\renewcommand{\figurename}{Figure}
\begin{figure}[h]
\centering
\includegraphics[width=10cm, height=7cm]{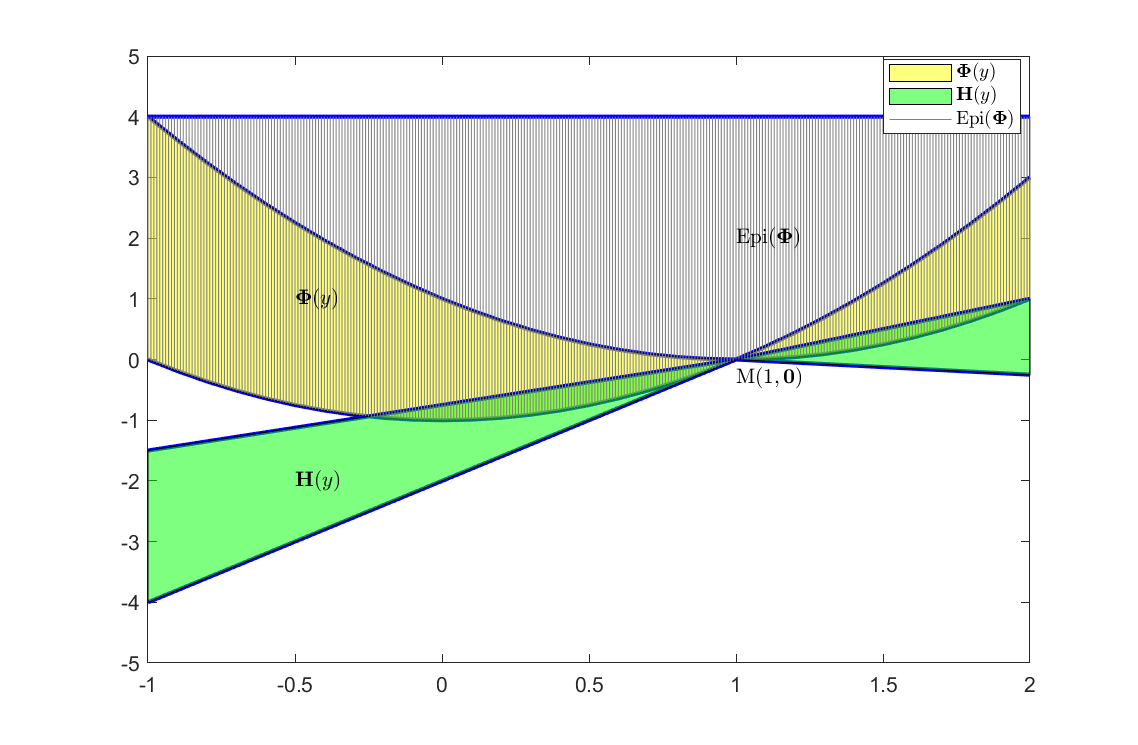}
\caption{The geometrical  view of the graph of IVF ${\bf{\Phi}}$ (yellow) and  the graph of IVF $\textbf H$ (green), which intersects the gray shaded region ($\text{Epi}({\bf{\Phi}})$) from below.} \label{figure1}
\end{figure}

 For example, Let $\mathcal Y = [-1,2]$. Consider
 an IVF ${\bf{\Phi}}: \mathcal Y \to I(\mathbb R)$ which is given by
$$
 {{\bf{\Phi}}}(y)=
 \begin{cases}
 \left[ y^2-1, (y-1)^2\right] & ~\text{if}~ y \in [-1,1]\\
  \left[ (y-1)^2, y^2-1\right] & ~\text{if}~ y \in (1,2].
 \end{cases}$$
The $gH$-weak subdifferential of ${\bf{\Phi}}$ at $u = 1$ is
\begin{align*}
 \partial^w{\bf{\Phi}}(1)=\left\{({\textbf G}^{w}, c )\in I(\mathbb{R}) \times \mathbb{R}_{+}: [-c,2-c]\preceq {\textbf {G}^w} \preceq [c, 2+c]\right\}.
\end{align*}
For instance, $(\textbf G^w, c)= ([0.25, 1.5], 0.5) \in \partial^{w}{\bf{\Phi}}(1)$, geometrically indicates that the IVF $$ \textbf H(y)=
{\bf{\Phi}}(1)\oplus [0.25,1.5] \odot (y-1) \ominus_{gH} 0.5 \lvert y-1\rvert$$  intersects
 $$
 \text{Epi}({\bf{\Phi}}) = \{ (y,\textbf 4 )\in \mathcal Y \times \mathbb R: {{\bf{\Phi}}}(y)\preceq \textbf 4\}$$
at the point M$(1,\textbf 0)$ from below as shown in Figure \ref{figure1}.
We also observe from the figure that
\begin{align*}
   \text{ Epi}({\bf{\Phi}}) \subset \text{Epi}(\textbf H),~ \text{and}~ cl(\text{Epi}({\bf{\Phi}}))\bigcap  Gr(\textbf H)~\text{is nonempty}. \\
\end{align*}

\begin{theorem}{\emph{(Convexity of $gH$-weak subdifferential).}}\label{opi}
Let $\mathcal{Y} \subset \mathbb{R}^n$. Let the $gH$-weak subdifferential of $\bf{\Phi}: \mathcal{Y} \rightarrow I(\mathbb{R})$ at $u$ be nonempty. Then, the set $\partial ^w{\bf{\Phi}}(u)$ is convex.
\end{theorem}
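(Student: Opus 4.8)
The plan is to establish convexity straight from Definition~\ref{etuhj}. I would fix two $gH$-weak subgradients $(\widehat{\textbf{G}^w_1}, c_1)$ and $(\widehat{\textbf{G}^w_2}, c_2)$ in $\partial^w{\bf{\Phi}}(u)$ together with a scalar $\lambda \in [0,1]$, and show that the convex combination
\[
\left(\lambda \odot \widehat{\textbf{G}^w_1} \oplus (1-\lambda)\odot \widehat{\textbf{G}^w_2},\ \lambda c_1 + (1-\lambda)c_2\right)
\]
again belongs to $\partial^w{\bf{\Phi}}(u)$. Membership of the second coordinate in $\mathbb{R}_+$ is automatic, since $c_1, c_2 \ge 0$ and $\lambda, 1-\lambda \ge 0$; hence the whole task reduces to verifying the defining inequality \eqref{mnbvc} for the combined pair.

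First I would record the two hypotheses: for every $y \in \mathcal{Y}$ and $i \in \{1,2\}$,
\[
\widehat{\textbf{G}^w_i}^{\top} \odot (y-u) \ominus_{gH} c_i \lVert y-u\rVert \preceq {\bf{\Phi}}(y)\ominus_{gH}{\bf{\Phi}}(u).
\]
Since the dominance $\preceq$ is preserved under multiplication by a nonnegative scalar and under $\oplus$ (both checked at once on endpoints), I would scale the first inequality by $\lambda$ and the second by $1-\lambda$, then add them. Writing $\textbf{D} = {\bf{\Phi}}(y)\ominus_{gH}{\bf{\Phi}}(u)$, the right-hand side becomes $\lambda\odot\textbf{D}\oplus(1-\lambda)\odot\textbf{D}$; because $\lambda$ and $1-\lambda$ share the same (nonnegative) sign, the distributive law $\lambda\odot\textbf{D}\oplus(1-\lambda)\odot\textbf{D} = \textbf{D}$ applies, so the right-hand side collapses back to ${\bf{\Phi}}(y)\ominus_{gH}{\bf{\Phi}}(u)$.

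It then remains to identify the resulting left-hand side with the defining expression of the combined pair, that is, to prove
\[
\left(\lambda \odot \widehat{\textbf{G}^w_1} \oplus (1-\lambda)\odot \widehat{\textbf{G}^w_2}\right)^{\top}\!\odot (y-u) \ominus_{gH} \big(\lambda c_1 + (1-\lambda)c_2\big)\lVert y-u\rVert = \lambda\odot\textbf{A}_1 \oplus (1-\lambda)\odot\textbf{A}_2,
\]
where $\textbf{A}_i = \widehat{\textbf{G}^w_i}^{\top}\odot(y-u)\ominus_{gH} c_i\lVert y-u\rVert$. I would verify this on endpoints, using two observations: (i) a $gH$-difference of an interval with a real scalar is just ordinary coordinatewise subtraction of that scalar; and (ii) although interval multiplication is not distributive in general, here the second factor $(y-u)$ is a real vector, so for each coordinate $j$ the endpoints of $\textbf{G}^w_{ij}\odot(y_j-u_j)$ are selected purely by the sign of $y_j-u_j$ (as $\underline{g}^w_{ij}\le\overline{g}^w_{ij}$). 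Hence $\widehat{\textbf{G}^w}^{\top}\odot(y-u)$ is linear in the endpoints of $\widehat{\textbf{G}^w}$, and both scaling by $\lambda\ge 0$ and $\oplus$ act linearly on endpoints; matching lower and upper endpoints on the two sides yields the identity. Combined with the previous step, this gives \eqref{mnbvc} for the combined pair, proving that $\partial^w{\bf{\Phi}}(u)$ is convex.

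The step I expect to demand the most care---and the only genuinely delicate one---is the endpoint bookkeeping in (ii), because distributivity of $\odot$ and the behaviour of $\ominus_{gH}$ both fail for general intervals and are rescued here only by the scalar nature of $(y-u)$ and the sign analysis of its components. Everything else is routine monotonicity of $\preceq$ under $\oplus$ and nonnegative scaling.
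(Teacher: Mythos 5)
Your proposal is correct and follows essentially the same route as the paper's proof: scale the two defining inequalities by $\lambda$ and $1-\lambda$, add them, collapse the right-hand side via $\lambda\odot\textbf{D}\oplus(1-\lambda)\odot\textbf{D}=\textbf{D}$, and justify the left-hand side identification by a sign-split of the components of $y-u$ (the paper does this by rearranging so the first $m$ components are non-negative, which is exactly your endpoint bookkeeping in step (ii)). If anything, you are more explicit than the paper about why the scalar nature of $y-u$ and of $c_i\lVert y-u\rVert$ rescues distributivity and the $\ominus_{gH}$ manipulations.
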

	
	\begin{proof}
	
	Let $(\widehat{\textbf G^{w}_{1}},c_{1})$ and $(\widehat{\textbf G^{w}_{2}}, c_{2}) \in  \partial^{w}{\bf{\Phi}}(u)$, where $\widehat{\textbf G^{w}_1}=(\textbf G^w_{11}, \textbf G^w_{12}, \ldots, \textbf G^w_{1n})^\top,  \widehat{\textbf G^{w}_2}=(\textbf G^w_{21}, \textbf G^w_{22}, \ldots, \textbf G^w_{2n})^\top$. Let $\beta \in [0,1]$. From the definition of $\partial^{w}{\bf{\Phi}}(u)$, we have
 \begin{align}\label{ghu}
&    \widehat{\textbf{G}^{w}_{1}}^{\top} \odot (y-u) \ominus_{gH} c_{1}\lVert y-u \rVert \preceq {{\bf{\Phi}}}(y)\ominus_{gH} {\bf{\Phi}}(u) ~\text{and}~\\
& \label{ppoiuytew}   \widehat{\textbf{G}^{w}_{2}}^{\top} \odot (y-u) \ominus_{gH} c_{2}\lVert y-u \rVert \preceq {{\bf{\Phi}}}(y)\ominus_{gH} {\bf{\Phi}}(u),
 \end{align}
for all $y \in \mathcal{Y}$. Up to a rearrangement of terms, let the first $m$  components of $(y-u)$ be non-negative and the rest be negative. Then, from the inequalities (\ref{ghu}) and (\ref{ppoiuytew}), we get
\begin{align}
 & \bigoplus_{i=1}^{m} ~(y_{i}-u_{i}) \odot \textbf{G}^w_{1i} \bigoplus_{j=m
 +1}^{n} (y_{j}-u_{j}) \odot \textbf{G}^w_{1j} \ominus_{gH} c_{1}\lVert y-u \rVert\preceq {\bf{\Phi}}(y) \ominus_{gH} {\bf{\Phi}}(u) ~\nonumber
 \end{align}
 and
 \begin{align}
 &  \bigoplus_{i=1}^{m} ~(y_{i}-u_{i}) \odot \textbf{G}^w_{2i} \bigoplus_{j=m
 +1}^{n} (y_{j}-u_{j}) \odot \textbf{G}^w_{2j} \ominus_{gH} c_{2}\lVert y-u \rVert \preceq{{\bf{\Phi}}}(y) \ominus_{gH} {\bf{\Phi}}(u).  \nonumber
 \end{align}
 Thus,
 \begin{align}
 &\label{gjk}
  \bigoplus_{i=1}^{m}~ \beta \odot ((y_{i}-u_{i}) \odot \textbf{G}^w_{1i}) \bigoplus_{j=m+1}^{n} \beta \odot ((y_{j}-u_{j}) \odot \textbf{G}^w_{1j})\ominus_{gH} \beta c_{1} \lVert y-u \rVert  \preceq   \beta \odot ({{\bf{\Phi}}}(y)\ominus {\bf{\Phi}}(u)) \end{align}
  {and}
  \begin{align}\label{hui}
&\bigoplus_{i=1}^{m} ~(1-\beta) \odot ((y_{i}-u_{i}) \odot \textbf{G}^w_{2i}) \bigoplus_{j=m+1}^{n} (1-\beta) \odot ((y_{j}-u_{j}) \odot \textbf{G}^w_{2j})
 \ominus_{gH} (1-\beta) c_{2} \lVert y-u \rVert \nonumber\\
 \preceq  &~ (1-\beta) \odot ({{\bf{\Phi}}}(y)\ominus {\bf{\Phi}}(u)).
 \end{align}
 By adding  (\ref{gjk}) and (\ref{hui}), we obtain
 \begin{align}
  & \bigoplus_{i=1}^{m} ~(y_{i}-u_{i})\odot \{\beta \odot \textbf{G}^{w}_{1i}\oplus (1-\beta)\odot \textbf{G}^{w}_{2i}\} \bigoplus_{j=m+1}^{n}(y_{j}-u_{j})\odot \{\beta \odot \textbf{G}^{w}_{1j}\oplus
  (1-\beta) \odot \textbf{G}^{w}_{2j}\} \nonumber\\
  & \ominus_{gH} (\beta  c_{1}
  \oplus (1-\beta)  c_{2}) \lVert y-u \rVert \preceq {{\bf{\Phi}}}(y) \ominus_{gH} {\bf{\Phi}}(u).
  \end{align}
  Therefore, we have
  \[  \{\beta \odot \widehat{\textbf{G}^{w}_{1}}\oplus (1-\beta)\odot \widehat{\textbf{G}^{w}_{2}}\}^{\top} \odot(y-u)  \ominus_{gH} (\beta c_{1}
  \oplus (1-\beta) c_{2}) \lVert y-u \rVert  \preceq    {{\bf{\Phi}}}(y) \ominus_{gH} {\bf{\Phi}}(u),\]
  i.e., $(\beta\odot \widehat{\textbf{G}^{w}_{1}}\oplus (1-\beta) \odot \widehat{\textbf{G}^{w}_{2}} , ~\beta c_{1} \oplus (1-\beta )c_{2}) \in \partial^w {\bf{\Phi}}(u)$, which proves the convexity of $\partial^{w}{\bf{\Phi}}(u)$.
 	\end{proof}

\begin{theorem}{\emph{(Closedness of $gH$-weak subdifferential).}} \label{opw}
Let $\emptyset \neq \mathcal{Y} \subseteq I(\mathbb{R})^n$. If for an IVF $\bf{\Psi}: \mathcal{Y} \rightarrow I(\mathbb{R})$, the set $\partial ^{w}{\bf{\Psi}}(u)$ is nonempty at $u \in \mathcal{Y}$, then $\partial ^{w}{\bf{\Psi}}(u)$ is closed.
\end{theorem}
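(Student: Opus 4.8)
The plan is to use the sequential characterization of closedness given in Definition \ref{closedset}: I take an arbitrary convergent sequence in $\partial^{w}{\bf{\Psi}}(u)$ and show that its limit again lies in $\partial^{w}{\bf{\Psi}}(u)$. So, first I would let $\{(\widehat{\textbf{G}^{w}_{k}}, c_{k})\}$ be a sequence in $\partial^{w}{\bf{\Psi}}(u)$ converging to some $(\widehat{\textbf{G}^{w}}, c) \in I(\mathbb{R})^n \times \mathbb{R}_{+}$. Since each $c_{k} \geq 0$ and $c_{k} \to c$ in $\mathbb{R}$, the limit satisfies $c \geq 0$, so the pair indeed lies in the correct ambient space $I(\mathbb{R})^n \times \mathbb{R}_{+}$. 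By the definition of $\partial^{w}{\bf{\Psi}}(u)$, each term of the sequence obeys, for every $y \in \mathcal{Y}$,
\[
\widehat{\textbf{G}^{w}_{k}}^{\top} \odot (y-u) \ominus_{gH} c_{k} \lVert y-u \rVert \preceq {\bf{\Psi}}(y) \ominus_{gH} {\bf{\Psi}}(u).
\]

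Next, I would fix $y \in \mathcal{Y}$, write $\textbf{L}_{k}(y)$ for the left-hand side above, and pass to the limit $k \to \infty$. By Remark \ref{uparmk}, convergence of $\widehat{\textbf{G}^{w}_{k}}$ to $\widehat{\textbf{G}^{w}}$ in $I(\mathbb{R})^n$ is equivalent to convergence of the lower and upper endpoints of each component interval, together with $c_{k} \to c$. The expression $\textbf{L}_{k}(y)$ is built from the scalar–interval products $\odot$, a finite sum $\oplus$, and one $\ominus_{gH}$, whose endpoints are sums and $\min/\max$ of products of the endpoints of $\widehat{\textbf{G}^{w}_{k}}$ and the fixed scalars $y_i - u_i$, $c_k \lVert y - u\rVert$. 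Since $\min$, $\max$, addition, and multiplication are continuous, the endpoints of $\textbf{L}_{k}(y)$ converge to the endpoints of
\[
\textbf{L}(y) := \widehat{\textbf{G}^{w}}^{\top} \odot (y-u) \ominus_{gH} c \lVert y-u \rVert.
\]

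Finally, I would invoke the fact that the dominance order $\preceq$ of Definition \ref{interval_dominance} is defined purely by non-strict inequalities between endpoints. Denoting $[\underline{r}(y), \overline{r}(y)] = {\bf{\Psi}}(y) \ominus_{gH} {\bf{\Psi}}(u)$, which is independent of $k$, the relation $\textbf{L}_{k}(y) \preceq {\bf{\Psi}}(y) \ominus_{gH} {\bf{\Psi}}(u)$ reads $\underline{L}_{k}(y) \leq \underline{r}(y)$ and $\overline{L}_{k}(y) \leq \overline{r}(y)$. Letting $k \to \infty$ and using that $\leq$ is preserved under limits in $\mathbb{R}$, I obtain $\underline{L}(y) \leq \underline{r}(y)$ and $\overline{L}(y) \leq \overline{r}(y)$, i.e., $\textbf{L}(y) \preceq {\bf{\Psi}}(y) \ominus_{gH} {\bf{\Psi}}(u)$. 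As $y \in \mathcal{Y}$ was arbitrary, $(\widehat{\textbf{G}^{w}}, c) \in \partial^{w}{\bf{\Psi}}(u)$, which proves closedness. The only mildly delicate point is the continuity of the endpoints of $\textbf{L}_{k}(y)$ as functions of $(\widehat{\textbf{G}^{w}_{k}}, c_{k})$; this is where the $\min/\max$ in the interval product must be handled, but it presents no real obstruction once the operations are written at the endpoint level.
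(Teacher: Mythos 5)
Your proposal is correct and follows essentially the same route as the paper: both reduce the interval inequality to real inequalities between endpoints (via the definition of $\preceq$), use Remark \ref{uparmk} to get endpoint convergence from convergence in $I(\mathbb{R})^n \times \mathbb{R}_{+}$, and conclude by the preservation of non-strict inequalities under limits in $\mathbb{R}$. The only cosmetic difference is that the paper fixes a sign pattern for the components of $y-u$ so the endpoints of the products can be written explicitly without $\min/\max$, whereas you handle the $\min/\max$ directly by continuity; you also make explicit the small point, glossed over in the paper, that $c = \lim c_k \geq 0$ so the limit pair lies in the correct ambient space.
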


\begin{proof}
Let $\{(\widehat{\textbf{G}^{w}_{k}}, c_k )\}$  be an arbitrary sequence in $\partial^{w}{{\bf{\Psi}}}(y)$  converging to $(\widehat{\textbf G^w}, c) \in I(\mathbb{R})^n \times \mathbb{R}_{+}$, where  $\widehat{\textbf G^{w}_k}= (\textbf G^w_{k1},\textbf G^w_{k2},\ldots,\textbf G^w_{kn})^\top$ and $\widehat{\textbf G^w}=(\textbf G^w_{1},\textbf G^w_{2},\ldots,\textbf G^w_{n})^\top$. Since ($\widehat{\textbf G^{w}_k}, c) \in \partial^{w}\textbf{T}(y)$ for all $d \in \mathcal{Y}$, we obtain
\[  \widehat{\textbf G^{w}_k}^\top \odot d\ominus_{gH} c_{k} \lVert d \rVert \preceq{\bf{\Psi}}(u+ d) \ominus_{gH} {\bf{\Psi}}(u),\]
which implies
\begin{align} \label{bnmlkj}
\bigoplus_{i=1}^{n} d_{i} \odot \textbf{G}^{w}_{ki} \ominus_{gH} c_{k} \lVert d \rVert  \preceq{\bf{\Psi}}(u+d) \ominus_{gH} {\bf{\Psi}}(u).
\end{align}
Up to a rearrangement of terms, let the first $p$ components  of $d$ be non-negative and the rest be negative.
Then, from (\ref{bnmlkj}), we get
\[   \bigoplus_{i=1}^p
 ~d_{i} \odot \textbf{G}^{w}_{ki} \bigoplus_{j=p+1}^{n} d_{j} \odot \textbf{G}^{w}_{kj} \ominus_{gH} c_{k} \lVert d \rVert \preceq{\bf{\Psi}}(u + d) \ominus_{gH} {\bf{\Psi}}(u)  \]
 \[\implies  \bigoplus_{i=1}^p
 d_{i} \odot [\underline {g^{w}_{ki}}, \overline {g^{w}_{ki}}] \bigoplus_{j=p+1}^{n} d_{j} \odot [\underline {g^{w}_{kj}}, \overline {g^{w}_{kj}}] \ominus_{gH} c_{k} \lVert d \rVert\preceq{\bf{\Psi}}(u+ d) \ominus_{gH} {\bf{\Psi}}(u).\]
 Therefore,
 \begin{align}
 & \label{gfd}  \sum_{i=1}^{p} \underline {g^{w}_{ki}} d_{i} + \sum_{j=p+1}^{n} \overline {g^{w}_{kj}}d_{j}-c_{k}\lVert d \rVert\preceq \min\left\{\underline \Psi(u+d)-\underline \Psi(u), \overline \Psi(u+d)-\overline \Psi(u) \right\}
 \end{align}
 $\text{and}$
 \begin{align}
 & \label{sde}  \sum_{i=1}^{p}  \overline {g^{w}_{ki}} d_{i} + \sum_{j=p+1}^{n} \underline {g^{w}_{kj}} d_{j} - c_{k} \lVert d \rVert \preceq \max\left\{\underline \Psi(u+d)-\underline \Psi(u), \overline \Psi(u+d) -\overline \Psi(u) \right\}.
 \end{align}
 Since the sequence $\widehat{\textbf G^{w}_{k}}$ converges to $\widehat{\textbf G^w}$, the sequences $\{\underline {g^{w}_{ki}}\}$ and$\{\overline {g^{w}_{ki}}\}$ converge to $\{\underline {g^{w}_{i}}\}$ and $\{\overline {g^{w}_{i}}\}$, respectively for all $i$. Thus, by  (\ref{gfd}) and (\ref{sde}), we have
 \begin{align*}
 \sum_{i=1}^{p} \underline {g^{w}_{ki}} d_{i} + \sum_{j=p+1}^{n} \overline {g^{w}_{kj}} d_{j}- c_{k} \lVert d \rVert &\rightarrow  \sum_{i=1}^{p} \underline {g^{w}_{i}} d_{i} + \sum_{j=p+1}^{n} \overline {g^{w}_{j}} d_{j}- c \lVert d \rVert \\
&\preceq \min\bigg\{ \underline {\Psi}(u+ d)-\underline \Psi(u),\overline {\Psi}(u+ d)-\overline \Psi(u)\bigg\}
\end{align*}
$\text{and}$
\begin{align*}
\sum_{i=1}^{p} \overline {g^{w}_{ki}} d_{i} + \sum_{j=p+1}^{n} \underline {g^{w}_{kj}} d_{j}- c_{k} \lVert d \rVert& \rightarrow \sum_{i=1}^{p} \overline {g^{w}_{i}} d_{i} + \sum_{j=p+1}^{n} \underline {g^{w}_{j}} d_{j}- c \lVert d \rVert \\ &\preceq \max\bigg\{ \underline \Psi(u+ d)-\underline \Psi(u),\overline \Psi(u+ d)-\overline \Psi(u)\bigg\}.
\end{align*}
Hence, for any $u  \in \mathcal{Y}$,
\begin{align*}
&  \bigg[ \sum_{i=1}^{p} \underline {g^{w}_{i}} d_{i}+ \sum_{j=p+1}^n\overline {g^{w}_{j}} d_{j} -c\lVert d \rVert,\sum_{i=1}^{p}\overline {g^{w}_{i}} d_{i}+ \sum_{j=p+1}^{n}\underline {g^{w}_{j}} d_{j} -c\lVert d \rVert\bigg]\preceq{\bf{\Psi}}(u+ d) \ominus_{gH} {\bf{\Psi}}(u) \\
\implies&  \bigoplus_{i=1}^{p}~[\underline{ g^{w}_{i}}d_{i},\overline {g^{w}_{i}}d_{i} ] \bigoplus_{j=p+1}^{n}[\overline {g^{w}_{j}}d_{j},\underline {g^{w}_{j}}d_{j} ]\ominus_{gH} c\lVert d \rVert\preceq{\bf{\Psi}}(u+ d) \ominus_{gH} {\bf{\Psi}}(u) \\
\implies&\bigoplus_{i=1}^{p} ~d_{i} \odot {\textbf G}^{w}_{i} \bigoplus_{j=p+1}^{n} d_{j} \odot {\textbf{G}^{w}_{j}} \ominus_{gH} c\lVert d \rVert \preceq {\bf{\Psi}}(u+d) \ominus_{gH} {\bf{\Psi}}(u)\\
\implies&    \widehat{\textbf G^w}^{\top} \odot d\ominus_{gH} c\lVert d \rVert \preceq{\bf{\Psi}}(u+ d) \ominus_{gH} {\bf{\Psi}}(u).
 \end{align*}
 Therefore, $\widehat{\textbf {G}^{w}} \in \partial^{w} {\bf{\Psi}}(u)$, and hence $\partial^{w} {\bf{\Psi}}(u)$ is closed.
\end{proof}

\begin{definition}\label{werrr_1}{($gH$-\emph{Fr\'{e}chet lower subdifferential})}.
Let $\bf{\Phi}: \mathcal{Y} \rightarrow I(\mathbb{R}) \cup \{-\infty,+\infty\}$ be an IVF that is finite at an $u \in \mathcal{Y}$. Then, the $gH$-Fr\'{e}chet lower subdifferential of $\bf{\Phi}$ at $u$ is defined by
\begin{align*}
\partial^{-}_{\mathscr{F}} {{\bf{\Phi}}}(u)=\bigg\{\widehat{\textbf{G}}:~&\textbf 0 \preceq {\liminf_{\substack{%
y \to u\\ y \neq u}} \frac{1}{\lVert y-u \rVert}\odot \{{{\bf{\Phi}}}(y) \ominus_{gH} {\bf{\Phi}}(u) \ominus_{gH} \widehat{\textbf G}^{\top}\odot(y-u)}\},\\
&\text{where} ~\widehat{\textbf G}: \mathcal{Y} \rightarrow I(\mathbb{R})~ \text{is $gH$-continuous and linear IVF}~ \bigg\}.
\end{align*}
\end{definition}
One important fact is that $gH$-weak subdifferential is an  immediate consequence of $gH$-Fr\'{e}chet lower subdifferential.
 \begin{theorem} \label{cvb1}
Let $\emptyset \neq \mathcal{Y} \subseteq \mathbb{R}^n$. If $\bf{\Phi}: \mathcal{Y} \to I(\mathbb{R})$ has $gH$-Fr\'{e}chet lower subdifferential $\widehat{\textbf G}$ at the point $u$, then $(\widehat{\textbf G}, \epsilon)$ is a $gH$-weak subgradient of $\bf{\Phi}$  at $u$ for any $\epsilon \in \mathbb{R}_{+}$.
 \end{theorem}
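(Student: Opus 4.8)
The plan is to read the $gH$-weak subgradient inequality~\eqref{mnbvc} straight off the $\liminf$ condition in Definition~\ref{werrr_1}, turning the local lower estimate furnished by the $gH$-Fr\'echet lower subdifferential into the required dominance. Throughout I would fix $\epsilon>0$, abbreviate $s=\lVert y-u\rVert$, and write $\textbf{D}(y)=({\bf{\Phi}}(y)\ominus_{gH}{\bf{\Phi}}(u))\ominus_{gH}\widehat{\textbf{G}}^{\top}\odot(y-u)$, so that the hypothesis is $\textbf 0\preceq\liminf_{y\to u}\tfrac{1}{s}\odot\textbf{D}(y)$.

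First I would unpack this hypothesis. By the componentwise description of infima and suprema of interval sets (the Remark following Definition~\ref{lbound}), the interval $\liminf$ splits into the two scalar lower limits of the endpoints of $\tfrac1s\odot\textbf{D}(y)$, and $\textbf 0\preceq\liminf$ forces both of these to be nonnegative. By the elementary characterization of $\liminf$, for the prescribed $\epsilon$ there is then a radius $\alpha>0$ with $[-\epsilon,-\epsilon]\preceq\tfrac1s\odot\textbf{D}(y)$ for every $y$ satisfying $0<\lVert y-u\rVert<\alpha$. Multiplying this dominance by the nonnegative scalar $s$ (which preserves $\preceq$ and satisfies $s\odot(\tfrac1s\odot\textbf{D}(y))=\textbf{D}(y)$) gives
\[
[-\epsilon s,\,-\epsilon s]\preceq({\bf{\Phi}}(y)\ominus_{gH}{\bf{\Phi}}(u))\ominus_{gH}\widehat{\textbf{G}}^{\top}\odot(y-u).
\]

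Next I would invoke the rearrangement Lemma~\ref{yure}\eqref{fds} with $\textbf X={\bf{\Phi}}(y)\ominus_{gH}{\bf{\Phi}}(u)$, $\textbf Y=\widehat{\textbf{G}}^{\top}\odot(y-u)$, and $\gamma=\epsilon s$: it converts the previous line into $\widehat{\textbf{G}}^{\top}\odot(y-u)\ominus_{gH}[\epsilon s,\epsilon s]\preceq{\bf{\Phi}}(y)\ominus_{gH}{\bf{\Phi}}(u)$, which is precisely~\eqref{mnbvc} with $c=\epsilon$. To account for the quantifier ``for any $\epsilon$'' I would record the monotonicity in the subtracted degenerate interval, namely $\widehat{\textbf{G}}^{\top}\odot(y-u)\ominus_{gH}[\epsilon' s,\epsilon' s]\preceq\widehat{\textbf{G}}^{\top}\odot(y-u)\ominus_{gH}[\epsilon s,\epsilon s]$ whenever $\epsilon'\geq\epsilon$, so by transitivity of $\preceq$ one inequality at a small $\epsilon$ already yields all larger ones.

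The main obstacle is the mismatch between the local character of the $\liminf$ and the global quantifier ``for every $y\in\mathcal{Y}$'' in Definition~\ref{etuhj}: the chain above only produces the estimate on the ball $B_{\alpha}(u)$. I expect this to be reconciled by working in a neighbourhood of $u$ — consistent with the local nature of the Fr\'echet notion — i.e.\ with the relevant portion of $\mathcal{Y}$ taken as such a neighbourhood. I would also flag that the endpoint $\epsilon=0$ is genuinely out of reach of this argument, since the strict $\liminf$ estimates only buy dominance after a strictly positive relaxation; strict positivity of $\epsilon$ is exactly what the lower subdifferential provides. No convexity or differentiability beyond the $gH$-Fr\'echet lower subdifferential enters, which is the whole point of the statement.
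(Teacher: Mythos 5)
Your proposal is correct, and its skeleton coincides with the paper's: both extract from the $\liminf$ hypothesis a radius $\delta>0$ on which $-\epsilon\lVert y-u\rVert \preceq ({\bf{\Phi}}(y)\ominus_{gH}{\bf{\Phi}}(u))\ominus_{gH}\widehat{\textbf G}^{\top}\odot(y-u)$ holds, then rearrange this into the inequality of Definition \ref{etuhj} with $c=\epsilon$ and conclude. The one genuine difference is the rearrangement lemma: the paper invokes Lemma \ref{dr2} with the scalar $-\epsilon\lVert y-u\rVert$ playing the role of its ``$\epsilon$'', which formally violates that lemma's stated hypothesis $\epsilon\geq 0$ (harmless, since the proof in Appendix \ref{appendix_ind} never uses nonnegativity), whereas you invoke part (\ref{fds}) of Lemma \ref{yure} with $\gamma=\epsilon\lVert y-u\rVert\geq 0$, exactly within its hypotheses; so your route is the more hypothesis-compliant of the two, at no extra cost. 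You also make explicit two points the paper passes over in silence: first, the derived dominance holds only on the ball $B_{\delta}(u)$ while Definition \ref{etuhj} quantifies over all of $\mathcal{Y}$ — the paper's proof has precisely the same local/global gap and concludes regardless — and second, the case $\epsilon=0$, which is included in $\mathbb{R}_{+}$ as the paper defines it, is not reached by either argument, since the $\liminf$ only yields dominance after a strictly positive relaxation (the paper's proof likewise restricts itself to ``the $\epsilon>0$ in the hypothesis''). Your closing monotonicity remark (dominance for a small $\epsilon$ implies it for all larger ones) is true but unnecessary, as the argument already runs separately for each fixed $\epsilon>0$.
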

 \begin{proof}
Let $\widehat{\textbf G} \in  \partial^{-}_{\mathscr{F}
}{{\bf{\Phi}}}(u)$. Due to Definition \ref{werrr_1}, we can write
\[ \textbf 0 \preceq \liminf_{\substack{%
y \to u\\ y \neq u}} \frac{1}{\lVert y-u \rVert} \odot \{{{{\bf{\Phi}}}(y) \ominus_{gH} {\bf{\Phi}}(u) \ominus_{gH} \widehat{\textbf G}^{\top}\odot(y-u)}\}.
\]
Then, for the $\epsilon > 0$ in the hypothesis there exists $\delta > 0$ such that
 \[-\epsilon\lVert y-u\rVert \preceq {{\bf{\Phi}}}(y) \ominus_{gH} {\bf{\Phi}}(u) \ominus_{gH} \widehat{\textbf{G}}^{\top} \odot (y-u) ~ \forall ~y \in  B_{\delta}(u),
 \]
Then, from Lemma \ref{dr2}, we have  \[ \widehat{\textbf  G}^{\top}\odot (y-u)  \ominus_{gH}  \epsilon \lVert y-u \rVert \preceq {{\bf{\Phi}}}(y) \ominus_{gH} {\bf{\Phi}}(u).
 \]
 By Definition \ref{etuhj}, $(\widehat{\textbf G}, \epsilon)$ is a $gH$-weak subdifferential of $\textbf{T}$
 at $u$.
 \end{proof}

  \begin{lemma}\label{yuv}
For any $y \in \mathbb{R}^n$ and $\widehat{\textbf C}=(\textbf C_{1}, \textbf C_{2}, \textbf C_{3}, \ldots, \textbf C_{n}) \in I(\mathbb{R})^n$,
\[ - \lVert y \rVert  \lVert \widehat{\textbf C}\rVert_{I(\mathbb{R})^n}  \preceq  \lVert y^{\top} \odot \widehat{\textbf C} \rVert_{I(\mathbb{R})}
\]
\end{lemma}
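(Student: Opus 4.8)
The plan is to reduce the stated dominance to an ordinary inequality between real numbers and then to justify it through a Cauchy--Schwarz--type estimate for the interval inner product $y^{\top}\odot\widehat{\textbf C}$. I would first observe that both sides of the claimed relation are elements of $\mathbb R$ viewed as degenerate intervals: the right-hand side $\lVert y^{\top}\odot\widehat{\textbf C}\rVert_{I(\mathbb R)}$ is by definition a value in $\mathbb R_{+}$, and the left-hand side $-\lVert y\rVert\,\lVert\widehat{\textbf C}\rVert_{I(\mathbb R)^n}$ is a product of two nonnegative reals carrying a minus sign, hence a nonpositive real. For degenerate intervals the relation $\preceq$ collapses to $\le$, so it suffices to prove $-\lVert y\rVert\,\lVert\widehat{\textbf C}\rVert_{I(\mathbb R)^n}\le\lVert y^{\top}\odot\widehat{\textbf C}\rVert_{I(\mathbb R)}$. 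Already the sheer nonnegativity of the right-hand side against the nonpositivity of the left yields the claim; the substantive work lies in making this quantitative through a two-sided norm bound, which is the route I would actually carry out.

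First I would write the interval inner product explicitly as $y^{\top}\odot\widehat{\textbf C}=\bigoplus_{i=1}^{n}y_i\odot\textbf C_i$ and split the index set according to the sign of each $y_i$, so that the scalar multiple $y_i\odot\textbf C_i$ equals $[y_i\underline c_i,\,y_i\overline c_i]$ when $y_i\ge 0$ and $[y_i\overline c_i,\,y_i\underline c_i]$ when $y_i<0$. Summing these componentwise with $\oplus$ produces closed-form endpoints $\underline s$ and $\overline s$ of the resulting interval $y^{\top}\odot\widehat{\textbf C}=[\underline s,\overline s]$, whence $\lVert y^{\top}\odot\widehat{\textbf C}\rVert_{I(\mathbb R)}=\max\{\lvert\underline s\rvert,\lvert\overline s\rvert\}$.

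The key step is the upper estimate on this norm. For each $i$ one has $\max\{\lvert y_i\underline c_i\rvert,\lvert y_i\overline c_i\rvert\}=\lvert y_i\rvert\,\lVert\textbf C_i\rVert_{I(\mathbb R)}$ directly from the definition $\lVert\textbf C_i\rVert_{I(\mathbb R)}=\max\{\lvert\underline c_i\rvert,\lvert\overline c_i\rvert\}$. Applying the triangle inequality to the sums defining $\underline s$ and $\overline s$ then gives $\lvert\underline s\rvert,\lvert\overline s\rvert\le\sum_{i=1}^{n}\lvert y_i\rvert\,\lVert\textbf C_i\rVert_{I(\mathbb R)}$, and using $\lvert y_i\rvert\le\lVert y\rVert$ for every $i$ yields $\sum_{i=1}^{n}\lvert y_i\rvert\,\lVert\textbf C_i\rVert_{I(\mathbb R)}\le\lVert y\rVert\sum_{i=1}^{n}\lVert\textbf C_i\rVert_{I(\mathbb R)}=\lVert y\rVert\,\lVert\widehat{\textbf C}\rVert_{I(\mathbb R)^n}$. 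Hence $\lVert y^{\top}\odot\widehat{\textbf C}\rVert_{I(\mathbb R)}\le\lVert y\rVert\,\lVert\widehat{\textbf C}\rVert_{I(\mathbb R)^n}$, so the quantity on the right lies in $\left[0,\ \lVert y\rVert\,\lVert\widehat{\textbf C}\rVert_{I(\mathbb R)^n}\right]$ and is in particular at least the nonpositive number $-\lVert y\rVert\,\lVert\widehat{\textbf C}\rVert_{I(\mathbb R)^n}$. This delivers the required inequality and therefore the dominance.

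I expect the only genuine obstacle to be the bookkeeping in the sign-case analysis: one must track carefully which endpoint of $\textbf C_i$ is scaled to the lower and which to the upper end of $y_i\odot\textbf C_i$, and confirm that the identity $\max\{\lvert y_i\underline c_i\rvert,\lvert y_i\overline c_i\rvert\}=\lvert y_i\rvert\,\lVert\textbf C_i\rVert_{I(\mathbb R)}$ is insensitive to that sign. Everything else is routine, and the final comparison of the nonnegative right-hand side with the nonpositive left-hand side is immediate.
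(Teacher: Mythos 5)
Your proof is correct and follows essentially the same route as the paper's: both expand $y^{\top}\odot\widehat{\textbf C}$ componentwise, use the triangle inequality together with $\lvert y_i\rvert\,\lVert\textbf C_i\rVert_{I(\mathbb R)}$ and $\lvert y_i\rvert\le\lVert y\rVert$ to obtain the upper bound $\lVert y^{\top}\odot\widehat{\textbf C}\rVert_{I(\mathbb R)}\le\lVert y\rVert\,\lVert\widehat{\textbf C}\rVert_{I(\mathbb R)^n}$, and then deduce the claimed lower dominance. Your opening observation that the nonnegativity of the right-hand norm against the nonpositivity of the left-hand side already settles the statement is also valid, and in fact makes the quantitative bound superfluous for the lemma exactly as stated.
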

\begin{proof}  See Appendix \ref{appendix_iou}.
\end{proof}
To investigate the class of interval-valued functions for which weak subgradients always exist, we need the following definition.

\begin{definition} (\emph{$gH$-lower Lipschitz IVF}).
Let $\emptyset \neq \mathcal{Y} \subseteq \mathbb{R}^n$. An IVF ${\bf{\Phi}} : \mathcal Y \rightarrow \overline{I({\mathbb{R}})}$ is called $gH$-lower locally Lipschitz at $u \in \mathcal Y$ if $ \exists~ L\geq 0$  and a neighbourhood $\mathcal N(u)$ of $u$ such that
\begin{align} \label{sdfhh}
     -  L \lVert y- u \rVert\preceq {{\bf{\Phi}}}(y) \ominus_{gH} {\bf{\Phi}}(u) ~\forall ~y \in \mathcal N(u).
\end{align}
If the inequality (\ref{sdfhh})  satisfies for all $y \in \mathcal Y$, then ${\bf{\Phi}} $ is called $gH$-lower Lipschitz at $u \in \mathcal Y$ with Lipschitz constant $ L$.
\end{definition}

\begin{example}
 Let ${\bf{\Phi}}: [1, \infty) \to I(\mathbb R)$ be an IVF, defined by ${\bf{\Phi}}(y)= \ln{y} \odot \textbf C$ for all $y \in [1,\infty)$, where $\textbf 0 \preceq \textbf C = [\underline c, \overline c]$. Let $\delta >0$. We choose the neighbourhood of $u$,  $\mathcal N_{\delta}(u) = \{y : \lvert y-u \rvert < \delta  \}$.\\
If $0 < y-u< \delta$, then $u < y$ and also then $\frac{y}{u}>1$ and then
\begin{align} \label{logequ_1}
    0 < \ln \frac{y}{u} &< \frac{y}{u}-1,~ \text{since}~ \ln (1+p) < p ~\text{if}~ p >0 \nonumber \\
    &  \leq y - u.
\end{align}
Since $ \underline c, \overline c \geq 0$,  we have
 $$ (\ln y -\ln u) \underline c \leq (y-u) \underline c ~\text{and}~ (\ln y -\ln u) \overline c \leq (y-u) \overline c.  $$ Then,
 \begin{align}\label{inmulog_1}
   (\ln y -\ln u) \odot \textbf C \preceq (y-u) \odot \textbf C .
   \end{align}
If $-\delta < y-u <0$, then $y < u$ and also then $ \frac{u}{y} >1$ and then
\begin{align}\label{logequ_2}
    0 < \ln \frac{u}{y} & <\frac{u}{y}-1, ~\text{since}~ \ln (1+p) < p~ \text{if}~ p >0 \nonumber \\
    & \leq u- y.
\end{align}
Then, similarly, as seen in (\ref{inmulog_1}),
\begin{align}\label{inmulog_2}
 (\ln u -\ln  y) \odot \textbf C \preceq (u- y) \odot \textbf C .
\end{align}
Combining ($\ref{inmulog_1}$) and ($\ref{inmulog_2}$), we have
 \begin{align*}
   &  \lvert \ln y -\ln u \rvert \odot \textbf C \preceq {\lvert y-u \rvert} \odot \textbf C\\
  \implies &  \ln u \odot \textbf C \ominus_{gH} \ln  y\odot \textbf C  \preceq   \lvert y-u \rvert  \odot \textbf C \\
  \implies & - \lvert y-u \rvert  \odot \textbf C \preceq \ln y \odot \textbf C \ominus_{gH} \ln u \odot \textbf C \\
  \implies & - \overline c \lvert y -u \rvert \preceq {\bf{\Phi}}(y) \ominus_{gH} {\bf{\Phi}}(u).
 \end{align*}
This shows that ${\bf{\Phi}}$ is $gH$-lower locally Lipschitz on $\mathcal N_{\delta}(u) $ with  $L =\overline c$. From arbitrariness of $y, u$ in $[1, \infty)$, we conclude that ${\bf{\Phi}}$ is $gH$-lower Lipschitz on $[1, \infty)$.
\end{example}

 \begin{theorem}\label{alg_1}
 \label{hgfd}
Let $\emptyset \neq \mathcal Y \subseteq \mathbb{R}^n$. 
Let ${\bf{\Phi}} : \mathcal Y \rightarrow \overline {I(\mathbb{R})}$ be an IVF, where ${\bf{\Phi}}(u) $ is finite for some $ u \in  \mathcal Y$. Then, the following three statements are equivalent:
\begin{enumerate}[(a)]
\item ${\bf{\Phi}}$ is $gH$-weak subdifferentiable at $ u$.
\item ${\bf{\Phi}}$ is $gH$-lower Lipschitz at $u$.
\item ${\bf{\Phi}}$ is $gH$-lower locally Lipschitz at $u$, and there exists a number $p \geq 0$ and an interval $\textbf Q$ such that
\begin{align} \label{ewr}
 -p \lVert y \rVert \oplus \textbf Q\preceq{\bf{\Phi}}( y)~ \forall~ y \in \mathcal Y.
\end{align}
\end{enumerate}
 \end{theorem}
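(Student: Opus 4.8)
The plan is to prove the three statements equivalent through the chain (a) $\Rightarrow$ (b), (b) $\Rightarrow$ (c), (c) $\Rightarrow$ (b), and (b) $\Rightarrow$ (a); the two implications passing through (b) already force (a) $\Leftrightarrow$ (b), and combining with (b) $\Leftrightarrow$ (c) yields the full equivalence. Three of these four implications are short consequences of the lemmas established above, and essentially all of the difficulty is concentrated in (c) $\Rightarrow$ (b).

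I would begin with the equivalence of (a) and (b). For (a) $\Rightarrow$ (b), let $(\widehat{\textbf{G}^{w}},c)$ be a $gH$-weak subgradient of ${\bf{\Phi}}$ at $u$, so that (\ref{mnbvc}) holds. By Lemma \ref{yuv} the linear interval term is bounded below, $-\|\widehat{\textbf{G}^{w}}\|_{I(\mathbb{R})^n}\|y-u\| \preceq \widehat{\textbf{G}^{w}}^\top \odot (y-u)$, and since subtracting the degenerate interval $c\|y-u\|$ is an order-preserving translation, the left-hand side of (\ref{mnbvc}) is dominated from below by $-(\|\widehat{\textbf{G}^{w}}\|_{I(\mathbb{R})^n}+c)\|y-u\|$. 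Transitivity then yields the $gH$-lower Lipschitz inequality with $L=\|\widehat{\textbf{G}^{w}}\|_{I(\mathbb{R})^n}+c$. Conversely, for (b) $\Rightarrow$ (a), I would simply exhibit the weak subgradient $(\widehat{\textbf 0},L)$: here $\widehat{\textbf 0}^\top \odot (y-u)=\textbf 0$, so the left side of (\ref{mnbvc}) collapses to $-L\|y-u\|$, and the resulting inequality is precisely the lower Lipschitz condition.

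For (b) $\Rightarrow$ (c), the local Lipschitz property is immediate, since an inequality valid on all of $\mathcal Y$ holds in particular on any neighbourhood of $u$. To produce the growth estimate (\ref{ewr}), I would apply Lemma \ref{yure}(\ref{fds}) to $-L\|y-u\| \preceq {\bf{\Phi}}(y)\ominus_{gH}{\bf{\Phi}}(u)$ (with $\gamma=L\|y-u\|$) to get ${\bf{\Phi}}(u)\ominus_{gH}L\|y-u\| \preceq {\bf{\Phi}}(y)$, and then use the triangle inequality $\|y-u\|\le\|y\|+\|u\|$ to weaken this into $-L\|y\|\oplus\textbf Q \preceq {\bf{\Phi}}(y)$ with $p=L$ and $\textbf Q={\bf{\Phi}}(u)\ominus_{gH}L\|u\|$.

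The crux is (c) $\Rightarrow$ (b), where the obstacle is that the local Lipschitz estimate only controls $y$ near $u$, while points far from $u$ must be handled through the coarser growth bound (\ref{ewr}). Fixing $r>0$ and $L_1\ge0$ with $-L_1\|y-u\|\preceq{\bf{\Phi}}(y)\ominus_{gH}{\bf{\Phi}}(u)$ whenever $\|y-u\|<r$, I would write ${\bf{\Phi}}(y)\ominus_{gH}{\bf{\Phi}}(u)$ in endpoint form and observe that the target bound reduces to the two scalar inequalities $\underline\phi(y)\ge\underline\phi(u)-L\|y-u\|$ and $\overline\phi(y)\ge\overline\phi(u)-L\|y-u\|$. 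For $\|y-u\|\ge r$, I would substitute the endpoint bounds $\underline\phi(y)\ge\underline q-p\|y\|$ and $\overline\phi(y)\ge\overline q-p\|y\|$ coming from (\ref{ewr}) and combine with $\|y\|\le\|y-u\|+\|u\|$; the two inequalities then hold as soon as $(L-p)\|y-u\|$ exceeds the fixed constants $\underline\phi(u)-\underline q+p\|u\|$ and $\overline\phi(u)-\overline q+p\|u\|$, which, using $\|y-u\|\ge r$, becomes a lower bound on $L$ alone. Choosing $L$ to be the maximum of $L_1$ and these two thresholds produces one global Lipschitz constant valid on all of $\mathcal Y$. The delicate point is exactly this reconciliation: one must verify that inflating $L$ simultaneously absorbs the local constant and the growth constant uniformly over the unbounded far region $\{y:\|y-u\|\ge r\}$, so that a single $L$ serves everywhere.
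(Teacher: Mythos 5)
Your proof is correct, but it is organized differently from the paper's and is in one important respect simpler. The paper proves the single cycle (a) $\Rightarrow$ (b) $\Rightarrow$ (c) $\Rightarrow$ (a): its first two implications coincide with yours (the same use of Lemma \ref{yuv} plus transitivity, and the same choice $\textbf Q = {\bf{\Phi}}(u) \ominus_{gH} L\lVert u \rVert$, $p = L$ via (\ref{fds}) of Lemma \ref{yure}), but all of its difficulty sits in (c) $\Rightarrow$ (a), which it proves by contradiction: assuming no weak subgradient exists, it produces points $y_{n}$ violating (\ref{mnbvc}) for pairs $(\widehat{\textbf G^{w}_{n}}, c_{n})$ with $c_{n} \to \infty$, argues that $y_{n} \to u$, and contradicts the local Lipschitz bound (\ref{sdfhh}). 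You never need that argument: your observation that the lower Lipschitz inequality is literally the weak-subgradient inequality (\ref{mnbvc}) for the pair $(\widehat{\textbf 0}, L)$ --- since $\widehat{\textbf 0}^{\top} \odot (y-u) \ominus_{gH} L\lVert y-u \rVert = -L\lVert y-u \rVert$ --- makes (b) $\Rightarrow$ (a) a one-line verification, and you then close the equivalence with a direct, constructive proof of (c) $\Rightarrow$ (b), an implication the paper never proves explicitly (it holds there only implicitly through the cycle). Your gluing of the two regimes is sound: reducing $\preceq$ to the two endpoint inequalities is exactly right, and the two thresholds $p + (\underline{\phi}(u) - \underline{q} + p\lVert u \rVert)/r$ and $p + (\overline{\phi}(u) - \overline{q} + p\lVert u \rVert)/r$ are automatically $\geq p$ because evaluating (\ref{ewr}) at $y = u$ shows both numerators are nonnegative; this is worth stating explicitly, since your step $(L-p)\lVert y-u \rVert \geq (L-p)r$ silently requires $L \geq p$. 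As for what each route buys: yours is elementary and constructive, yielding explicit constants with no sequential reasoning, whereas the paper's contradiction argument needs several assumptions introduced on the fly (convergence of $\{\widehat{\textbf G^{w}_{n}}\}$, the condition $c_{n} - p - \lVert \widehat{\textbf G^{w}} \rVert \neq 0$, a boundedness-below claim) that your approach avoids entirely.
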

 \begin{proof}
 \fbox{(a) implies (b)}~: Suppose ${\bf{\Phi}} $ is $gH$-weak subdifferentiable at $u$. Then, there exists $(\widehat{\textbf G^w},c) \in I(\mathbb{R})^n \times \mathbb{R}_{+}$ such that for any $ y \in \mathcal{Y}$, we have
\begin{align}
 \label{bnm}\widehat{\textbf G^w}^{\top}\odot (y-u) \ominus_{gH} c \lVert y-u \rVert \preceq{{\bf{\Phi}}}(y) \ominus_{gH} {\bf{\Phi}}(u).
 \end{align}
   From Lemma \ref{yuv}, we have
  $ - \lVert \widehat{\textbf G^w} \rVert_{I(\mathbb{R})^n} \lVert y-u \rVert - c \lVert y-u \rVert\preceq \widehat{\textbf G^w}\odot (y-u) \ominus_{gH} c \lVert y-u \rVert$. Hence, the inequality  (\ref{bnm}) yields
  \begin{align*}
     -(\lVert \widehat{\textbf G^w}\rVert+ c)\lVert y-u\rVert \preceq{{\bf{\Phi}}}(y) \ominus_{gH} {\bf{\Phi}}(u) ~\text{by Lemma 2.3 (ii) of \cite{anshika}}.
  \end{align*}
 By choosing $ L = (\lVert \widehat{\textbf G^w}\rVert + c )$, we obtain
\begin{align}
  -  L \lVert y- u \rVert\preceq{\bf{\Phi}}(y) \ominus_{gH} {\bf{\Phi}}(u)~ \forall~ y \in \mathcal Y.
\end{align}
So, ${\bf{\Phi}}$ is $gH$-lower Lipschitz at $u$.\\ \\
\fbox{(b) implies  (c)}~: Suppose that (b) is satisfied. It needs to prove that the inequality (\ref{ewr}) holds. Then, there exists an $L \geq 0 $ such that
\begin{align}\label{tr}
     &  -L \lVert y-u \rVert\preceq {\bf {\Phi}}(y) \ominus_{gH} {\bf{\Phi}}(u).
\end{align}
Note that $-L \lVert y \rVert -L \lVert u \rVert \leq -L \lVert y-u\rVert $. So,
the inequality (\ref{tr}) gives
\begin{align*}
-L \lVert y \rVert -L \lVert u \rVert \preceq{{\bf{\Phi}}}(y) \ominus_{gH} {\bf{\Phi}}(u),   \end{align*}
which gives
${\bf{\Phi}}(u) \ominus_{gH} L \lVert u \rVert - L \lVert y \rVert \preceq {{\bf{\Phi}}}(y) \text{ by (\ref{fds}) of Lemma \ref{yure}}.$
 Taking $\textbf Q ={\bf{\Phi}}(u)\ominus_{gH}  L \lVert u \rVert $ and $ p =  L$, we obtain $ -p\lVert y \rVert \oplus \textbf Q \preceq {{\bf{\Phi}}}(y)$ for all $y \in \mathcal Y$.\\ \\
\fbox{(c) implies (a)}~:
Let $\mathcal N (u)$ be an $\epsilon$-neighbourhood of $u$ such that (\ref{sdfhh}) holds. Then, we get
\begin{align}\label{cxz}
&   - L \lVert y -u \rVert\preceq{\bf{\Phi}}(y) \ominus_{gH}{\bf{\Phi}} (u)~\forall~ y \in  \mathcal N (u)
\end{align}
~$\text{and}$
\begin{align}\label{hgf}
&  -p \lVert y \rVert \oplus  \textbf{Q}\preceq{{\bf{\Phi}}}(y) ~\forall~ y \in \mathbb{R}^n.
\end{align}
Assume to the contrary that ${\bf{\Phi}}$ is not $gH$-weak subdifferentiable at $u$. Then, for any $(\widehat{\textbf G^{w}_{n}}, c
_{n}) \in I(\mathbb{R})^n \times \mathbb{R}_{+}$, there exists $y_{n}$ such that
\begin{align} \label{fght}
{\bf{\Phi}}(y_{n}) \ominus_{gH} {\bf {\Phi}}(u) \prec \widehat{\textbf G^{w}_{n}}^{\top} \odot (y_{n} - u) \ominus_{gH} c_{n} \lVert y_{n} - y \rVert. \nonumber\end{align}
If the sequence $\{\widehat{\textbf G^{w}_{n}}\} $ is assumed to be converging to $\widehat{\textbf G^w}$, then we get
\begin{align}
{\bf{\Phi}}(y_{n}) \ominus_{gH}  {{\bf{\Phi}}}(u) \preceq~& \widehat{\textbf G^w}^{\top} \odot (y_{n} - u) \ominus_{gH} c_{n} \lVert y_{n} - y \rVert\nonumber\\
\preceq ~&\lVert \widehat{\textbf G^w} \rVert \lVert y_{n}-u \rVert -c_{n} \lVert  y_{n}-u \rVert,~\text{by Theorem 3.1 of \cite{Ghosh2020lasso}}.
\end{align}
By putting $y=y_n$ in (\ref{hgf}), we get
\begin{align*}
 -p\lVert y_{n}-u\rVert -p \lVert y\rVert \oplus \textbf Q \preceq -p\lVert y_{n}\rVert \oplus \textbf Q\preceq{\bf{\Phi}}(y_{n}),
\end{align*}
which implies
\begin{align} \label{yhut}
   (-p \lVert y_{n}- u  \rVert -p \lVert u \rVert \oplus \textbf{Q})\ominus_{gH} {\bf{\Phi}}(u)\preceq{\bf{\Phi}}(y_{n}) \ominus_{gH} {\bf{\Phi}}(u) ~\text{by Note 2 of ~\cite{anshika}}.
\end{align}
From (\ref{fght}) and (\ref{yhut}), by Lemma 2.3 ~(ii) of \cite{anshika}, we deduce that
\begin{align}
& (-p \lVert y_{n}- u  \rVert -p \lVert u \rVert \oplus \textbf{Q})\ominus_{gH} {\bf{\Phi}}(u) \preceq \lVert \widehat{\textbf G^w} \rVert \lVert y_{n}-u \rVert -c_{n} \lVert  y_{n}-u \rVert,\nonumber\\
\text{or}, ~&(c_n-p-\lVert \widehat{\textbf G^w}\rVert)   \lVert y_{n} - u \rVert\preceq{\bf{\Phi}}(u)\oplus p\lVert u\rVert \ominus_{gH} \textbf{Q}  ~ \text{by ~(\ref{dbv}) ~of Lemma ~\ref{yure}}.
\end{align}
Assume, without loss of generality,  that $c_{n}-p-\lVert\widehat{\textbf G^w}\rVert \neq 0$. Then, from (\ref{yure}), we obtain
\[ \lVert y_{n} -u \rVert \preceq\frac{1}{c_{n}-p-\lVert\widehat{\textbf G^w}\rVert}\odot \{{\bf{\Phi}}(u)\oplus  p \lVert u \rVert  \ominus_{gH} \textbf Q\}.
\]
As $({\bf{\Phi}}(u) \oplus p \lVert u\rVert \ominus_{gH} \textbf Q)$ is bounded below on $\mathcal N(u)$, we get $y_{n} \to u$ as $c_{n} \to \infty$. Thus, $y_{n} \in \mathcal N(u)$ for large $n$. Then, from
(\ref{cxz}) it follows that
\begin{align}
   - L \lVert y_{n}-u \rVert \preceq{\bf{\Phi}}(y_{n}) \ominus_{gH} {\bf{\Phi}}(u).
\end{align}
In view of (\ref{fght}), we obtain
\begin{align*}
    &{\bf{\Phi}}(y_{n}) \ominus_{gH} {\bf{\Phi}}(u) \preceq \lVert \widehat{\textbf G^w}\rVert \lVert y_{n} - u\rVert -c_{n} \lVert y_{n}- u  \rVert
      = -(c_{n}-\lVert \widehat{\textbf G^w}\rVert ) \lVert y_{n}-u \rVert.
\end{align*}
Since $c_{n} \to +\infty $ and $ L \geq 0$, we can pick $c_{n}$  sufficiently large  
so that $c_{n}-\lVert \widehat{\textbf{G}^w}\rVert \geq L$.
So,\[{\bf{\Phi}}(y_n) \ominus_{gH} {\bf{\Phi}}(u) \preceq -L \lVert y_{n}-u\rVert.\]
This inequality leads to a contradiction. So, the result follows.
\end{proof}

\begin{theorem}
Let $\emptyset \neq \mathcal{Y} \subseteq \mathbb{R}^n$. Let ${\bf{\Psi}}: \mathcal{Y}\rightarrow I(\mathbb{R})$ be $gH$-Fr\'{e}chet differentiable at $u$ with $gH$-Fr\'{e}chet derivative ${\bf{\Psi}}_{\mathscr F}(u)$. Then,  \[ \{({\bf{\Psi}}_{\mathscr{F}}(u),c): c \geq 0\} \subset \partial^{w}{\bf{\Psi}}(u).\]
\end{theorem}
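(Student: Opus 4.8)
The plan is to route the proof through the two results already established in this section: I will first show that the $gH$-Fr\'echet derivative $\mathbf{\Psi}_{\mathscr F}(u)$ is itself a $gH$-Fr\'echet lower subdifferential of $\mathbf{\Psi}$ at $u$, i.e. $\mathbf{\Psi}_{\mathscr F}(u) \in \partial^{-}_{\mathscr F}\mathbf{\Psi}(u)$, and then invoke Theorem \ref{cvb1} to upgrade this membership into the weak-subgradient statement valid for every $c \geq 0$. Since the linear $gH$-Fr\'echet derivative can be represented as $\mathbf{\Psi}_{\mathscr F}(u)(y-u) = \widehat{\mathbf G}^{\top}\odot(y-u)$ for a suitable $\widehat{\mathbf G}\in I(\mathbb R)^n$, the inclusion $\{(\mathbf{\Psi}_{\mathscr F}(u),c):c\geq 0\}\subset \partial^{w}\mathbf{\Psi}(u)$ will follow at once.

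First I would substitute $y=u+h$ in the definition of $gH$-Fr\'echet differentiability, rewriting the hypothesis as $\lim_{y\to u,\,y\neq u}\tfrac{1}{\lVert y-u\rVert}\lVert \mathbf{\Psi}(y)\ominus_{gH}\mathbf{\Psi}(u)\ominus_{gH}\widehat{\mathbf G}^{\top}\odot(y-u)\rVert_{I(\mathbb R)}=0$, using the linearity of the derivative to replace $\mathbf G(h)$ by $\widehat{\mathbf G}^{\top}\odot(y-u)$. Writing $\mathbf W(y)=\mathbf{\Psi}(y)\ominus_{gH}\mathbf{\Psi}(u)\ominus_{gH}\widehat{\mathbf G}^{\top}\odot(y-u)=[\underline w(y),\overline w(y)]$ and recalling $\lVert \mathbf W(y)\rVert_{I(\mathbb R)}=\max\{\lvert\underline w(y)\rvert,\lvert\overline w(y)\rvert\}$, the vanishing of the normalized norm forces both $\tfrac{\underline w(y)}{\lVert y-u\rVert}\to 0$ and $\tfrac{\overline w(y)}{\lVert y-u\rVert}\to 0$.

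Consequently the interval quotient $\tfrac{1}{\lVert y-u\rVert}\odot\mathbf W(y)=\bigl[\tfrac{\underline w(y)}{\lVert y-u\rVert},\tfrac{\overline w(y)}{\lVert y-u\rVert}\bigr]$ converges to $\mathbf 0$. Because this limit exists and equals $\mathbf 0$, its limit inferior, which by the Remark following Definition \ref{lbound} is computed endpoint-wise, is also $\mathbf 0$; hence $\mathbf 0\preceq \liminf_{y\to u}\tfrac{1}{\lVert y-u\rVert}\odot\mathbf W(y)$, which is exactly the condition in Definition \ref{werrr_1} certifying $\widehat{\mathbf G}=\mathbf{\Psi}_{\mathscr F}(u)\in\partial^{-}_{\mathscr F}\mathbf{\Psi}(u)$. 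Applying Theorem \ref{cvb1} with this $\widehat{\mathbf G}$ then yields that $(\mathbf{\Psi}_{\mathscr F}(u),c)$ is a $gH$-weak subgradient of $\mathbf{\Psi}$ at $u$ for every $c\in\mathbb R_{+}$, giving the claimed inclusion.

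The one delicate point is the passage from the scalar norm-limit to the interval-valued $\liminf$ inequality. The argument hinges on the fact that the $I(\mathbb R)$-norm dominates the absolute value of each endpoint, so that convergence of the normalized norm to zero propagates separately to both endpoints and thereby to the endpoint-wise interval $\liminf$. Everything else is a direct chaining of the stated definitions with Theorem \ref{cvb1}, and the fact that $\preceq$ is reflexive ensures the required $\mathbf 0\preceq\mathbf 0$ holds with equality.
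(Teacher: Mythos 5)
Your proposal is correct and takes essentially the same route as the paper: both proofs first establish $\mathbf{\Psi}_{\mathscr F}(u) \in \partial^{-}_{\mathscr F}\mathbf{\Psi}(u)$ by converting the norm-limit in the definition of $gH$-Fr\'echet differentiability into the interval-valued $\liminf$ condition of Definition \ref{werrr_1}, and then pass from the lower subdifferential to the weak subgradient inequality. The only difference is presentational: you invoke Theorem \ref{cvb1} for the final upgrade (and fill in the endpoint-wise argument the paper leaves implicit), whereas the paper re-derives that same implication inline by subtracting $c\lVert y-u\rVert$ from the lower-subdifferential inequality.
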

\begin{proof}
Since ${\bf{\Psi}}$ is $gH$-Fr\'{e}chet differentiable at $u$ with $gH$-Fr\'{e}chet derivative ${\bf{\Psi}}_{\mathscr F}(u)$, we get
\begin{align*}
 & \lim_{\substack{%
 y \rightarrow u}} \frac{1}{\lVert y-u \rVert}\odot \{{{\bf{\Psi}}}(y) \ominus_{gH} {\bf{\Psi}}(u) \ominus_{gH}{\bf{\Psi}}_{\mathscr F}(u)^{\top} \odot (y- u)\}=\textbf 0\\
   \implies & \liminf_{\substack{%
 y \rightarrow u\\ y \neq u}}\frac{1}{\lVert y-u\rVert}\odot\{{{\bf{\Psi}}}(y)\ominus_{gH} {\bf{\Psi}}(u) \ominus_{gH}{\bf{\Psi}}_{\mathscr F}(u)^{\top} \odot (y-u)\}=\textbf 0. ~
 \end{align*}
 Therefore, by Definition \ref{werrr_1}, ${\bf{\Psi}}_{\mathscr{F}}(u) \in \partial^{-}_{\mathscr F} {\bf{\Psi}}(u)$. So,
 \begin{align*}
 & {\bf{\Psi}}_{\mathscr{F}}(u)^{\top}  \odot(y-u) \preceq {{\bf{\Psi}}}(y) \ominus_{gH} {\bf{\Psi}}(u)~ \forall~ y \in \mathcal{Y} \\
 \implies & {\bf{\Psi}}_{\mathscr{F}}(u)^{\top}  \odot(y-u) \ominus_{gH} c \lVert y-u \rVert\preceq{{\bf{\Psi}}}(y) \ominus_{gH} {\bf{\Psi}}(u), ~\text{for any} ~c\geq 0.
 \end{align*}
  Hence, $({\bf{\Psi}}_{\mathscr F}(u), c) \in \partial^{w} {\bf{\Psi}}(u)$.
 \end{proof}

 \begin{lemma}\label{fsg}
Let $\emptyset \neq \mathcal{Y} \subseteq \mathbb{R}^n$. Let ${\bf{\Phi}}: \mathcal{Y}\rightarrow I(\mathbb{R})$ be $gH$-Fr\'{e}chet differentiable at $u$ with $gH$-Fr\'{e}chet derivative ${\bf{\Phi}}_{\mathscr F}(u)$. Then,  $ -1 \odot{\bf{\Phi}}_{\mathscr{F}}(u) \in \partial^{-}_{\mathscr{F}}(-1 \odot{\bf{\Phi}})(u).$
\end{lemma}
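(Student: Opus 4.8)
The plan is to show that the IVF ${\bf{\Psi}} := -1 \odot {\bf{\Phi}}$ is itself $gH$-Fr\'echet differentiable at $u$ with $gH$-Fr\'echet derivative $-1 \odot {\bf{\Phi}}_{\mathscr F}(u)$, and then to read off the desired membership in the $gH$-Fr\'echet lower subdifferential exactly as in the theorem immediately preceding this lemma.

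First I would record the algebraic identity that drives everything. Writing $y = u+h$ and setting $\textbf Z = {\bf{\Phi}}_{\mathscr F}(u)^{\top} \odot (y-u)$, Lemma \ref{ldr1} (after multiplying both sides by $-1$ and using $\textbf 0 \ominus_{gH} \textbf A = -1 \odot \textbf A$) gives
\[
\big((-1 \odot {\bf{\Phi}}(y)) \ominus_{gH} (-1 \odot {\bf{\Phi}}(u))\big) \ominus_{gH} (-1 \odot \textbf Z) = -1 \odot \big\{({\bf{\Phi}}(y) \ominus_{gH} {\bf{\Phi}}(u)) \ominus_{gH} \textbf Z \big\}.
\]
Since scalar multiplication distributes over $\oplus$ and over the $\odot$ action on $\mathbb{R}^n$, the left-hand side is precisely ${\bf{\Psi}}(y) \ominus_{gH} {\bf{\Psi}}(u) \ominus_{gH} (-1 \odot {\bf{\Phi}}_{\mathscr F}(u))^{\top} \odot (y-u)$, and $-1 \odot {\bf{\Phi}}_{\mathscr F}(u)$ remains linear and $gH$-continuous because ${\bf{\Phi}}_{\mathscr F}(u)$ is.

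Next I would transfer the differentiability. Using the norm invariance $\lVert -1 \odot \textbf X \rVert_{I(\mathbb R)} = \lVert \textbf X \rVert_{I(\mathbb R)}$, the displayed identity yields
\[
\tfrac{1}{\lVert h \rVert} \lVert {\bf{\Psi}}(u+h) \ominus_{gH} {\bf{\Psi}}(u) \ominus_{gH} (-1 \odot {\bf{\Phi}}_{\mathscr F}(u))(h) \rVert_{I(\mathbb R)} = \tfrac{1}{\lVert h \rVert} \lVert {\bf{\Phi}}(u+h) \ominus_{gH} {\bf{\Phi}}(u) \ominus_{gH} {\bf{\Phi}}_{\mathscr F}(u)(h) \rVert_{I(\mathbb R)}.
\]
By hypothesis the right-hand side tends to $0$ as $\lVert h \rVert \to 0$, so the left-hand side does too; hence ${\bf{\Psi}} = -1 \odot {\bf{\Phi}}$ is $gH$-Fr\'echet differentiable at $u$ with derivative $-1 \odot {\bf{\Phi}}_{\mathscr F}(u)$.

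Finally, I would close exactly as in the preceding theorem: the $gH$-Fr\'echet differentiability of ${\bf{\Psi}}$ forces the full $gH$-limit of $\tfrac{1}{\lVert y-u \rVert} \odot \{{\bf{\Psi}}(y) \ominus_{gH} {\bf{\Psi}}(u) \ominus_{gH} (-1 \odot {\bf{\Phi}}_{\mathscr F}(u))^{\top} \odot (y-u)\}$ to equal $\textbf 0$, whence the corresponding $\liminf$ equals $\textbf 0 \succeq \textbf 0$; by Definition \ref{werrr_1} this is exactly the statement $-1 \odot {\bf{\Phi}}_{\mathscr F}(u) \in \partial^{-}_{\mathscr F}(-1 \odot {\bf{\Phi}})(u)$. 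I expect the only delicate point to be the careful bookkeeping of the $gH$-difference under multiplication by $-1$ in the first step, since termwise one must verify $(-1 \odot \textbf A) \ominus_{gH} (-1 \odot \textbf B) = -1 \odot (\textbf A \ominus_{gH} \textbf B)$ and that this survives one further $\ominus_{gH}$; this is precisely the work already done by Lemma \ref{ldr1}, and everything afterward is routine.
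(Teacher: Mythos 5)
Your proposal is correct and follows essentially the same route as the paper: both hinge on Lemma \ref{ldr1} (together with $\textbf 0 \ominus_{gH} \textbf A = -1 \odot \textbf A$) to convert the $gH$-Fr\'echet limit for ${\bf{\Phi}}$ into the corresponding limit for $-1 \odot {\bf{\Phi}}$ with candidate derivative $-1 \odot {\bf{\Phi}}_{\mathscr F}(u)$, and then pass from the limit to the $\liminf$ in Definition \ref{werrr_1}. Your packaging of the middle step as an explicit Fr\'echet-differentiability claim for $-1 \odot {\bf{\Phi}}$ (with the norm invariance $\lVert -1 \odot \textbf X \rVert_{I(\mathbb R)} = \lVert \textbf X \rVert_{I(\mathbb R)}$ and the remark that linearity and $gH$-continuity are preserved) is only a presentational refinement of the paper's direct limit manipulation, and is in fact slightly more careful on the side conditions of Definition \ref{werrr_1}.
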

\begin{proof}
Since ${\bf{\Phi}}$ is $gH$-Fr\'{e}chet differentiable at $u$ with $gH$-Fr\'{e}chet derivative ${\bf{\Phi}}_{\mathscr F}(u)$, one gets
\begin{align*}
 & \lim_{\substack{%
 y \rightarrow u}} \frac{1}{\lVert y-u \rVert}\odot\{{{\bf{\Phi}}}(y) \ominus_{gH} {\bf{\Phi}}(u) \ominus_{gH}{\bf{\Phi}}_{\mathscr F}(u)^{\top} \odot (y- u)\}=\textbf 0.
 \end{align*}
  By applying Lemma \ref{ldr1}, we have
  \footnotesize{
 \begin{align*}
 & \lim_{\substack{%
 y \rightarrow u\\ y \neq u}}\frac{1}{\lVert y-u\rVert}\odot \bigg\{\textbf 0\ominus_{gH} \{(-1\odot {{\bf{\Phi}}})(y) \ominus_{gH} (-1 \odot {{\bf{\Phi}})(u)} \ominus_{gH} (-1\odot{\bf{\Phi}}_{\mathscr F}(u)^{\top}) \odot (y-u)\}\}\bigg\}=\textbf 0\\
  ~
 \implies & \lim_{\substack{%
 y \rightarrow u\\ y \neq u}}
 \frac{1}{\lVert y-u \rVert}\odot\bigg\{ (-1\odot {\bf{\Phi}})(y) \ominus_{gH} (-1 \odot {\bf{\Phi})(u)} \ominus_{gH} (-1\odot{\bf{\Phi}}_{\mathscr F}(u))^{\top} \odot (y-u)\bigg\}=\textbf 0 \\
 \implies & \liminf_{\substack{%
 y \rightarrow u\\ y \neq u}}
 \frac{1}{\lVert y-u \rVert}\odot\{ (-1\odot {\bf{\Phi}})(y) \ominus_{gH} (-1 \odot {\bf {\Phi})(u)} \ominus_{gH} (-1\odot{\bf{\Phi}}_{\mathscr F}(u))^{\top} \odot (y-u)\}=\textbf 0.
 \end{align*}
 }
Hence, $-1 \odot{\bf{\Phi}}_{\mathscr F}(u) \in \partial^{-}_{\mathscr F}(-1 \odot {\bf{\Phi}})(u)$.
\end{proof}

 Next, we focus on investigating the sum rule of two functions in terms of $gH$-weak subdifferential. For two real-valued functions $f_1$ and $f_2$, the sum rule \cite{Inceoglu} for their weak subdifferential is $\partial^{w}(f_1 + f_2)(x)= \partial^{w} f_1(x)+ \partial^{w} f_2(x).$ However, this sum rule
 does not hold for interval-valued functions. In the following, we provide such an example.

Consider the interval-valued functions ${\bf{\Phi}}_{1}:[-1,1]\to I(\mathbb R) $ and ${\bf{\Phi}}_{2}: [-1,1] \to I(\mathbb R)$, defined by
\[
{\bf{\Phi}}_{1}(y)=
 \begin{cases}
 \left[-y, \frac{1}{2}y\right] & ~\text{if}~ y \in [0,1]\\
  \left[-\frac{1}{2}y -y\right] & ~\text{if}~ ~y \in [-1,0]
 \end{cases}
~\text{ and }~{\bf{\Phi}}_{2}(y)=[y^2,-y+3],\] respectively.
For these two functions, the $gH$-weak subdifferential at $u =0$ are given by
\[\partial^{w}{\bf{\Phi}}_{1}(0) = \{ ({\textbf G^{w}_{1}},c_1) \in I(\mathbb{R}) \times \mathbb{R}_{+}: \left[-1,-\tfrac{1}{2}\right] \preceq {\textbf G^{w}_{1}} \oplus  c_{1},~ {\textbf G^{w}_1}\ominus_{gH} c_{1} \preceq\left[-1, \tfrac{1}{2}\right]~\forall~ y \in [-1,1]\} \]
and
\[\partial^{w}{\bf{\Phi}}_{2}(0) = \{({\textbf G^{w}_{2}},c_{2}) \in I(\mathbb R) \times \mathbb R_{+}: [-1,0] \preceq {\textbf G^{w}_{2}}\oplus c_{2},~ {\textbf G^{w}_2}\ominus_{gH}c_{2} \preceq [-1,0]~\forall ~y \in [-1,1]\}. \]
Thus, we have
\begin{align} \label{weaksum_1}
&\partial^{w}{\bf{\Phi}}_{1}(0)\oplus\partial^{w}{\bf{\Phi}}_{2}(0) \nonumber\\
=~& \{({\textbf H^w}, c) \in I(\mathbb R) \times \mathbb R_{+}: \left[-2,-\tfrac{1}{2}\right] \preceq {\textbf H^w}\oplus c,~{\textbf H^w}\ominus_{gH} c \preceq \left[-2,\tfrac{1}{2}\right]~\forall~ y \in [-1,1]\}.
\end{align}
Now, let $({\textbf{H}^w},c) \in \partial^{w} ({\bf{\Phi}}_{1}\oplus{\bf{\Phi}}_{2}\}(0)$, where
\[({\bf{\Phi}}_{1} \oplus{\bf{\Phi}}_{2})(y)=
\begin{cases}
 \left[y^2-y, -\frac{1}{2}y+3\right] &\text{if}   ~y\in [0,1]\\
 \left[y^2-\frac{1}{2}y, -2y+3\right] &\text{if}~ y \in [-1,0].
\end{cases} \]
 There are the following two cases corresponding to  $y \in [0,1] $ and $y \in [-1,0]$.
 \begin{enumerate}[$\bullet$ \textbf{Case} 1.]
 \item \label{bttb}As $y \geq 0$, we have
 \begin{eqnarray*}
 &&   {\textbf H^w}\odot y\ominus_{gH}c \odot y\preceq ({\bf{\Phi}}_{1}\oplus{\bf{\Phi}}_{2})(y) \ominus_{gH} ({\bf{\Phi}}_{1}\oplus {\bf{\Phi}}_{2})(0)\nonumber\\
 &\implies&  [\underline {h^{w}}-c, \overline {h^w}-c]\odot y\preceq \left[y^2-y, -\tfrac{1}{2}y\right] \nonumber\\
 & \implies &  \underline {h^w} -c\leq -1 ~\text{and }~  \overline {h^w} -c \leq-\tfrac{1}{2}.
  \end{eqnarray*}
 \item \label{vcx}As $-1 \leq y \leq 0 $, we have
 \begin{eqnarray*}
  &&  [(\overline {h^w} +c)y, (\underline {h^w}+c)y]\preceq\left[y^2-\tfrac{1}{2}y,-2y+3\right] \ominus_{gH} [0,3] \nonumber\\
  &\implies & [(\overline {h^w} +c)y, (\underline {h^w} +c)y]\preceq\left[ y^2-\tfrac{1}{2}y, -2y\right]\nonumber\\
  &\implies& -2-c \leq \underline {h^w} ~\text{and} -\tfrac{1}{2}-c \leq  \overline {h^w}.
 \end{eqnarray*}
 \end{enumerate}
 Therefore, form \textbf{Case} \ref{bttb} and \textbf{Case} \ref{vcx}, we have
 \begin{align}\label{weaksum_2}
 &\partial^{w}({\bf{\Phi}}_{1}\oplus{\bf{\Phi}}_{2})(0) \nonumber\\
 = ~&\{({\textbf H^w},c) \in I(\mathbb R) \times \mathbb R_{+}: \left[-2, -\tfrac{1}{2}\right] \preceq ({\textbf H^w}\oplus c), ({\textbf H^w}\ominus_{gH} c) \preceq \left[-1, -\tfrac{1}{2} \right] \forall~ y \in [0,1]\}.\end{align}
 Thus, (\ref{weaksum_1}) and (\ref{weaksum_2}) are not equal. \\

In the following  Theorem \ref{onesidesumrule}, we show that under some restriction on ${\bf{\Phi}}_1$ and ${\bf{\Phi}}_2$ one-sided inclusion for the sum rule holds.

{\begin{theorem}\label{onesidesumrule}
Let $\emptyset \neq \mathcal{Y} \subseteq \mathbb{R}^n$.
Let ${\bf{\Phi}}_{1}: \mathcal{Y} \rightarrow I(\mathbb{R})$ be $gH$-weak subdifferential at $u$ and ${\bf{\Phi}}_{2}: \mathcal{Y} \rightarrow  \mathbb{R}$ be $gH$-Fr\'{e}chet differentiable at $u$. Then,
\[
    \partial^{w}({\bf{\Phi}}_{1}\oplus{\bf{\Phi}}_{2})(u) \subset \partial^{w}{\bf{\Phi}}_{1}(u) \oplus \partial^{w}{\bf{\Phi}}_{2}(u),
\]
provided that $w(\widehat{\textbf G^{w}_{1}})\leq w(\widehat{\textbf G^{w}_{2}})$ for all $\widehat{\textbf G^{w}_{1}} \in {\partial{\bf{\Phi}}_{2}(y)}$ and $\widehat{\textbf G^{w}_{2}} \in {\partial} ({\bf{\Phi}}_{1}\oplus{\bf{\Phi}}_{2})(y)$,
 where $w(\textbf A)$ is the width of the interval $\textbf A \in I(\mathbb R)$.
\end{theorem}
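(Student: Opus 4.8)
The plan is to begin with an arbitrary $gH$-weak subgradient of the sum and to peel off the smooth summand ${\bf{\Phi}}_2$ through its $gH$-Fr\'echet derivative. So, let $(\widehat{\textbf{H}^{w}}, c)\in\partial^{w}({\bf{\Phi}}_1\oplus{\bf{\Phi}}_2)(u)$, which by Definition \ref{etuhj} means
\[
\widehat{\textbf{H}^{w}}^{\top}\odot(y-u)\ominus_{gH} c\lVert y-u\rVert \preceq ({\bf{\Phi}}_1\oplus{\bf{\Phi}}_2)(y)\ominus_{gH}({\bf{\Phi}}_1\oplus{\bf{\Phi}}_2)(u)\quad\text{for all}~y\in\mathcal Y.
\]
Write $\widehat{\textbf{D}}:=({\bf{\Phi}}_2)_{\mathscr F}(u)$ for the $gH$-Fr\'echet derivative of ${\bf{\Phi}}_2$ at $u$; since ${\bf{\Phi}}_2$ is real-valued, $\widehat{\textbf{D}}$ is a degenerate interval vector. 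By the theorem immediately preceding Lemma \ref{fsg}, $(\widehat{\textbf{D}}, c_2)\in\partial^{w}{\bf{\Phi}}_2(u)$ for every $c_2\ge 0$. The decomposition I would then target is $(\widehat{\textbf{D}}, c_2)\in\partial^{w}{\bf{\Phi}}_2(u)$ together with $(\widehat{\textbf{H}^{w}}\ominus_{gH}\widehat{\textbf{D}},\, c-c_2)\in\partial^{w}{\bf{\Phi}}_1(u)$, whose $\oplus$-sum should recover $(\widehat{\textbf{H}^{w}}, c)$.

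For the analytic core, I would first rewrite the right-hand side of the defining inequality. Applying Lemma \ref{gh2} to $({\bf{\Phi}}_1(y)\oplus{\bf{\Phi}}_2(y))\ominus_{gH}({\bf{\Phi}}_1(u)\oplus{\bf{\Phi}}_2(u))$ separates it into $({\bf{\Phi}}_1(y)\ominus_{gH}{\bf{\Phi}}_1(u))\oplus({\bf{\Phi}}_2(y)\ominus_{gH}{\bf{\Phi}}_2(u))$, and the width hypothesis $w(\widehat{\textbf G^{w}_{1}})\le w(\widehat{\textbf G^{w}_{2}})$ is the device that upgrades the generic inclusion of Lemma \ref{gh2} to a relation compatible with $\preceq$. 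This yields
\[
\widehat{\textbf{H}^{w}}^{\top}\odot(y-u)\ominus_{gH} c\lVert y-u\rVert \preceq ({\bf{\Phi}}_1(y)\ominus_{gH}{\bf{\Phi}}_1(u))\oplus({\bf{\Phi}}_2(y)\ominus_{gH}{\bf{\Phi}}_2(u)).
\]
Because ${\bf{\Phi}}_2$ is real-valued, the term ${\bf{\Phi}}_2(y)\ominus_{gH}{\bf{\Phi}}_2(u)$ is a degenerate interval, so part (\ref{ewn}) of Lemma \ref{yure} lets me transfer it to the left side, giving
\[
\big(\widehat{\textbf{H}^{w}}^{\top}\odot(y-u)\ominus_{gH} c\lVert y-u\rVert\big)\ominus_{gH}\big({\bf{\Phi}}_2(y)\ominus_{gH}{\bf{\Phi}}_2(u)\big) \preceq {\bf{\Phi}}_1(y)\ominus_{gH}{\bf{\Phi}}_1(u).
\]
Finally, comparing ${\bf{\Phi}}_2(y)\ominus_{gH}{\bf{\Phi}}_2(u)$ with the linear term $\widehat{\textbf{D}}^{\top}(y-u)$ and absorbing the first-order remainder into a slack $c_1\lVert y-u\rVert$, I would identify the left side as $(\widehat{\textbf{H}^{w}}\ominus_{gH}\widehat{\textbf{D}})^{\top}\odot(y-u)\ominus_{gH} c_1\lVert y-u\rVert$, which is exactly the $gH$-weak subgradient inequality certifying $(\widehat{\textbf{H}^{w}}\ominus_{gH}\widehat{\textbf{D}},\, c_1)\in\partial^{w}{\bf{\Phi}}_1(u)$.

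To close, it remains to verify the reconstruction of $(\widehat{\textbf{H}^{w}}, c)$. Since $\widehat{\textbf{D}}$ is degenerate, each component obeys $w(\widehat{\textbf{H}^{w}}_{i})\ge w(\widehat{\textbf{D}}_{i})=0$, so $(\widehat{\textbf{H}^{w}}\ominus_{gH}\widehat{\textbf{D}})\oplus\widehat{\textbf{D}}=\widehat{\textbf{H}^{w}}$ componentwise, and with $c_1+c_2=c$ this places $(\widehat{\textbf{H}^{w}}, c)$ in $\partial^{w}{\bf{\Phi}}_1(u)\oplus\partial^{w}{\bf{\Phi}}_2(u)$, establishing the inclusion. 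The principal obstacle, and the reason the sum rule survives only one-sidedly and only under the width restriction, is that $\ominus_{gH}$ fails to be additive over $\oplus$: the separation used above is in general a strict inclusion, and the monotonicity direction of $\preceq$ under $\ominus_{gH}$ can reverse once the nonlinear contribution of ${\bf{\Phi}}_2$ is subtracted. I therefore expect the delicate step to be a precise accounting of exactly where the hypothesis $w(\widehat{\textbf G^{w}_{1}})\le w(\widehat{\textbf G^{w}_{2}})$ is indispensable, ensuring that both the separation of the $gH$-difference and the subsequent subtraction preserve $\preceq$ and return $\widehat{\textbf{H}^{w}}$ exactly rather than a proper sub-interval.
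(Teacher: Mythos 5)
Your proposal tracks the paper's proof for most of its length: both arguments start from an arbitrary $(\widehat{\textbf{H}^{w}},c)\in\partial^{w}({\bf{\Phi}}_{1}\oplus{\bf{\Phi}}_{2})(u)$, split the right-hand side with Lemma \ref{gh2}, transfer the ${\bf{\Phi}}_{2}$-increment to the left with part (\ref{ewn}) of Lemma \ref{yure}, and aim at the decomposition into a subgradient of ${\bf{\Phi}}_{1}$ plus the pair $({\bf{\Phi}}_{2\mathscr F}(u),\cdot)\in\partial^{w}{\bf{\Phi}}_{2}(u)$. The divergence --- and the genuine gap --- is in how you replace the increment ${\bf{\Phi}}_{2}(y)\ominus_{gH}{\bf{\Phi}}_{2}(u)$ by the linear term $\widehat{\textbf{D}}^{\top}\odot(y-u)$, where $\widehat{\textbf{D}}:={\bf{\Phi}}_{2\mathscr F}(u)$. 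You propose to ``absorb the first-order remainder into a slack $c_{1}\lVert y-u\rVert$,'' and this fails on two counts. First, $gH$-Fr\'echet differentiability controls the remainder only \emph{locally}: for each $\epsilon>0$ there is a $\delta$-ball around $u$ on which the remainder is below $\epsilon\lVert y-u\rVert$, but the weak-subgradient inequality of Definition \ref{etuhj} must hold for \emph{every} $y\in\mathcal{Y}$, and nothing guarantees a finite $c_{1}$ dominating the remainder globally. Second, even granting such a global $c_{1}$, the slack you can certify for ${\bf{\Phi}}_{1}$ is $c+c_{1}$, not $c-c_{2}$: a larger slack constant makes the subgradient inequality strictly weaker, so $(\widehat{\textbf{H}^{w}}\ominus_{gH}\widehat{\textbf{D}},c+c_{1})\in\partial^{w}{\bf{\Phi}}_{1}(u)$ does not let you shrink the constant back down, and the Minkowski sum then contains $(\widehat{\textbf{H}^{w}},c+c_{1}+c_{2})$ rather than $(\widehat{\textbf{H}^{w}},c)$. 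Your closing bookkeeping $c_{1}+c_{2}=c$ is inconsistent with needing any extra slack at all.

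The paper avoids both problems with a different device at exactly this step: it invokes Lemma \ref{fsg} to place $-1\odot{\bf{\Phi}}_{2\mathscr F}(u)$ in $\partial^{-}_{\mathscr F}(-1\odot{\bf{\Phi}}_{2})(u)$ and reads this as the slack-free, global, \emph{one-sided} inequality $-1\odot{\bf{\Phi}}_{2\mathscr F}(u)\odot(y-u)\preceq -1\odot({\bf{\Phi}}_{2}(y)\ominus_{gH}{\bf{\Phi}}_{2}(u))$, i.e.\ its inequality (\ref{nds}); substituting this on the left preserves $\preceq$ with the original constant $c$ untouched, so the paper ends with $(\widehat{\textbf{G}^{w}}\ominus_{gH}{\bf{\Phi}}_{2\mathscr F}(u),c)\in\partial^{w}{\bf{\Phi}}_{1}(u)$ and $({\bf{\Phi}}_{2\mathscr F}(u),0)\in\partial^{w}{\bf{\Phi}}_{2}(u)$, which sum exactly to $(\widehat{\textbf{G}^{w}},c)$ because the derivative of the real-valued ${\bf{\Phi}}_{2}$ is degenerate (a point you also use, correctly). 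To repair your argument you need precisely this one-sided, slack-free global estimate, not a bound on the magnitude of the remainder. One observation in your favour: you correctly flag that Lemma \ref{gh2} gives only a set inclusion and that the width hypothesis is what should make the splitting compatible with $\preceq$; the paper passes over this silently, so you have identified a looseness in the published proof, even though your own substitute for the step at (\ref{nds}) does not work.
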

\begin{proof}
If $(\widehat{\textbf G^{w}},c) \in \partial^{w}({\bf{\Phi}}_{1}\oplus{\bf{\Phi}}_{2})(u)$, then
\begin{align}
 \widehat{\textbf G^{w}}^{\top} \odot (y-u) \ominus_{gH} c \lVert y-u\rVert  \preceq ({\bf{\Phi}}_{1} \oplus{\bf{\Phi}}_{2})(y) \ominus_{gH} ({\bf{\Phi}}_{1} \oplus{\bf{\Phi}}_{2})(u) \label{rnh}.
 \end{align}
We know that ${\bf{\Phi}}_{2}:\mathcal{Y} \rightarrow I(\mathbb{R})$ is $gH$-Fr\'{e}chet differentiable at $u$ with the $gH$-Fr\'echet derivative ${\bf{\Phi}}_{2\mathscr F}(u)$. Hence, ${\bf{\Phi}}_{2\mathscr F}(u) \in \partial^{-}_{\mathscr F}{\bf{\Phi}}_{2}(u)$ implies $-1 \odot{\bf{\Phi}}_{2\mathscr F}(u) \in \partial^{-}_{\mathscr F}(-1 \odot {\bf{\Phi}}_{2})(u)$. We can then write
\begin{align}
 &-1\odot{\bf{\Phi}}_{2\mathscr F}(u)\odot (y-u) \preceq (-1\odot {\bf{\Phi}}_{2})(u)\ominus_{gH} (-1\odot{\bf{\Phi}}_{2})(u)\nonumber\\
 \implies &-1 \odot{\bf{\Phi}}_{2\mathscr F}(u) \odot (y-u) \preceq -1 \odot({\bf{\Phi}}_{2}(y) \ominus_{gH}{\bf{\Phi}}_{2}(u))\nonumber\\
 &~\text{by properties of $gH$-difference~(iv) of \cite{Tao}} \label{nds}.
\end{align}
 In view of Lemma \ref{gh2}, (\ref{rnh}) becomes
\begin{align*}
  & \widehat{\textbf G^w}^{\top}\odot (y-u) \ominus_{gH} c\lVert y-u\rVert\preceq( {\bf{\Phi}}_{1}(y)\ominus_{gH}{\bf{\Phi}}_{1}(u)) \oplus({\bf{\Phi}}_{2}(y) \ominus_{gH}{\bf{\Phi}}_{2}(u))\nonumber.
 \end{align*}
 Using (\ref{ewn}) of Lemma \ref{yure}, this  inequality reduces to
 \begin{align*}
  & \widehat{\textbf G^{w}}^{\top}\odot (y-u) \ominus_{gH} ({\bf{\Phi}}_{2}(y)\ominus_{gH} {\bf{\Phi}}_{2}(u))\ominus_{gH} c\lVert y-u\rVert \preceq{\bf{\Phi}}_{1}(y) \ominus_{gH}{\bf{\Phi}}_{1}(u).
\end{align*}
Now, from the inequality (\ref{nds}), we see that
\begin{align*}
 \widehat{\textbf G^{w}}^{\top}\odot (y-u)\ominus_{gH}{\bf{\Phi}}_{2\mathscr F}(u)\odot (y-u)\ominus_{gH} c\lVert y-u\rVert \preceq{\bf{\Phi}}_{1}(y) \ominus_{gH}{\bf{\Phi}}_{1}(u).
\end{align*}
Thus,
\[(\widehat{\textbf G^w} \ominus_{gH} {\bf{\Phi}}_{2\mathscr F}(u))^{\top} \odot (y-u) \ominus_{gH} c \lVert y-u\rVert \preceq{\bf{\Phi}}_{1}(y)\ominus_{gH}{\bf{\Phi}}_{1}(u).
\]
Then, $(\widehat{\textbf G^w} \ominus_{gH}{\bf{\Phi}}_{2\mathscr F}(u),c) \in \partial^{w}{\bf{\Phi}}_{1}(u)$ and $({\bf{\Phi}}_{2 \mathscr F}(u),0) \in \partial^{w}{\bf{\Phi}}_{2}(u)$. Therefore, $(\widehat{\textbf G^w},c)\in \partial^{w}{\bf{\Phi}}_{1}(u)\oplus \partial^{w}{\bf{\Phi}}_{2}(u)$.
Hence, the result follows.
\end{proof}}

\begin{theorem}
Let $\mathcal{Y}$ be a nonempty set of $\mathbb{R}^{n}$. Let  ${\bf{\Phi}}_{1}:\mathcal{Y} \rightarrow I(\mathbb{R})$ be $gH$-Fr\'{e}chet differentiable at $u
$. Let ${\bf{\Phi}}_{2}: \mathcal{Y} \rightarrow I(\mathbb{R})$ be an IVF. If $u$ is a weak efficient point of ${\bf{\Phi}}_{1}\oplus{\bf{\Phi}}_{2}$, then $(-1 \odot{\bf{\Phi}}_{1 \mathscr F}(u), 0) \in \partial^{w}{\bf{\Phi}}_{2}(u)$.
\end{theorem}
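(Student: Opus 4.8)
The plan is to verify directly that the pair $(-1\odot{\bf{\Phi}}_{1\mathscr F}(u),0)$ satisfies the defining inequality of a $gH$-weak subgradient of ${\bf{\Phi}}_2$ at $u$ (Definition \ref{etuhj} with $c=0$), namely
\[
(-1\odot{\bf{\Phi}}_{1\mathscr F}(u))^{\top}\odot(y-u)\preceq{\bf{\Phi}}_2(y)\ominus_{gH}{\bf{\Phi}}_2(u)\quad\forall\,y\in\mathcal{Y}.
\]
I would reach this target by sandwiching its left-hand side through a chain of two dominance relations: the first supplied by the $gH$-Fr\'echet differentiability of ${\bf{\Phi}}_1$ via Lemma \ref{fsg}, and the second supplied by the weak efficiency of $u$. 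Transitivity of $\preceq$ then closes the argument.

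First, since ${\bf{\Phi}}_1$ is $gH$-Fr\'echet differentiable at $u$, Lemma \ref{fsg} gives $-1\odot{\bf{\Phi}}_{1\mathscr F}(u)\in\partial^{-}_{\mathscr F}(-1\odot{\bf{\Phi}}_1)(u)$. Reproducing the passage used in the proof of the preceding theorem (which deduced a global dominance inequality from membership in the $gH$-Fr\'echet lower subdifferential), this membership yields, for every $y\in\mathcal{Y}$,
\[
(-1\odot{\bf{\Phi}}_{1\mathscr F}(u))^{\top}\odot(y-u)\preceq(-1\odot{\bf{\Phi}}_1)(y)\ominus_{gH}(-1\odot{\bf{\Phi}}_1)(u)=-1\odot({\bf{\Phi}}_1(y)\ominus_{gH}{\bf{\Phi}}_1(u)),
\]
where the final equality uses the elementary identity $(-1\odot\textbf P)\ominus_{gH}(-1\odot\textbf Q)=-1\odot(\textbf P\ominus_{gH}\textbf Q)$.

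It then remains to establish the second relation, that weak efficiency forces $-1\odot({\bf{\Phi}}_1(y)\ominus_{gH}{\bf{\Phi}}_1(u))\preceq{\bf{\Phi}}_2(y)\ominus_{gH}{\bf{\Phi}}_2(u)$ for all $y$; this is the step I expect to be the main obstacle. Writing ${\bf{\Phi}}_i=[\underline{\phi}_i,\overline{\phi}_i]$, weak efficiency of $u$ for ${\bf{\Phi}}_1\oplus{\bf{\Phi}}_2$ means ${\bf{\Phi}}_1(u)\oplus{\bf{\Phi}}_2(u)\preceq{\bf{\Phi}}_1(y)\oplus{\bf{\Phi}}_2(y)$, i.e. $u$ globally minimizes both $\underline{\phi}_1+\underline{\phi}_2$ and $\overline{\phi}_1+\overline{\phi}_2$ on $\mathcal{Y}$. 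Setting $a=\underline{\phi}_1(y)-\underline{\phi}_1(u)$, $b=\overline{\phi}_1(y)-\overline{\phi}_1(u)$, $a'=\underline{\phi}_2(y)-\underline{\phi}_2(u)$ and $b'=\overline{\phi}_2(y)-\overline{\phi}_2(u)$, this reads $-a\le a'$ and $-b\le b'$, while the asserted dominance is precisely $[-\max\{a,b\},-\min\{a,b\}]\preceq[\min\{a',b'\},\max\{a',b'\}]$. The two required scalar inequalities, $-\max\{a,b\}\le\min\{a',b'\}$ and $-\min\{a,b\}\le\max\{a',b'\}$, then follow from $-a\le a'$ and $-b\le b'$ by elementary reasoning about $\min$ and $\max$: $\min\{-a,-b\}$ is a common lower bound of $a'$ and $b'$, and each of $-a,-b$ is bounded above by $\max\{a',b'\}$.

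Chaining the two dominance relations through transitivity of $\preceq$ then gives the target inequality for all $y\in\mathcal{Y}$, so $(-1\odot{\bf{\Phi}}_{1\mathscr F}(u),0)$ meets the requirement of Definition \ref{etuhj} with $c=0$, whence $(-1\odot{\bf{\Phi}}_{1\mathscr F}(u),0)\in\partial^{w}{\bf{\Phi}}_2(u)$. Apart from the finite case check on $\min$ and $\max$, the only delicate point is the inference from $gH$-Fr\'echet lower subdifferentiability to the global dominance inequality, which I would settle by invoking the corresponding step of the previous theorem rather than reproving it.
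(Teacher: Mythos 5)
Your proposal is correct and takes essentially the same route as the paper's proof: Lemma \ref{fsg} supplies the linearization bound (the paper's inequality (\ref{wqbgfre})), weak efficiency supplies the dominance $-1\odot\left({\bf{\Phi}}_1(y)\ominus_{gH}{\bf{\Phi}}_1(u)\right)\preceq{\bf{\Phi}}_2(y)\ominus_{gH}{\bf{\Phi}}_2(u)$ (the paper's inequality (\ref{bvd})), and transitivity of $\preceq$ closes the argument. The only difference is presentational: where the paper cites Lemma 2.3 of \cite{anshika} and the $\ominus_{gH}$ properties of \cite{Tao} for these interval manipulations, you verify the same inequalities inline by elementary endpoint computations with $\min$ and $\max$.
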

\begin{proof}
Since $u$ is a weak efficient point of ${\bf{\Phi}}_{1}\oplus{\bf{\Phi}}_{2} $, for any $ y \in \mathcal{Y}$,
\begin{align}
 &  ({\bf{\Phi}}_{1} \oplus{\bf{\Phi}}_{2}) (u)\preceq   ({\bf{\Phi}}_{1}\oplus{\bf{\Phi}}_{2})(y)\nonumber\\
 \implies & {\bf{\Phi}}_{1}(u) \oplus{\bf{\Phi}}_{2}(u)\preceq{\bf{\Phi}}_{1}(y) \oplus{\bf{\Phi}}_{2}(y) \nonumber\\
\implies &{\bf{\Phi}}_{1}(u) \ominus_{gH}{\bf{\Phi}}_{1}(y)\preceq{\bf{\Phi}}_{2}(y) \ominus_{gH}{\bf{\Phi}}_{2}(u),~\text{using Lemma 2.3 of \cite{anshika}} \nonumber\\
  \implies &  (-1)\odot \{{\bf{\Phi}}_{1}(y) \ominus_{gH}{\bf{\Phi}}_{1}(u)\} \preceq{\bf{\Phi}}_{2}(y) \ominus_{gH}{\bf{\Phi}}_{2}(u),~ \text{by} \ominus_{gH} \text{property in (iv) of \cite{Tao}}\nonumber\\
 \implies & (-1 \odot{\bf{\Phi}}_{1})(y) \ominus_{gH} (-1\ \odot{\bf{\Phi}}_{1})(u)\preceq{\bf{\Phi}}_{2}(y)\ominus_{gH}{\bf{\Phi}}_{2}(u), \nonumber\\
 &~\text{by} \ominus_{gH} \text{property in (iv) of \cite{Tao}}.
\label{bvd}
  \end{align}
 By the Lemma \ref{fsg}, we also obtain that
 \begin{align}\label{wqbgfre}
   (-1)\odot{\bf{\Phi}}_{1\mathscr F}(u) \odot (y-u)\preceq (-1 \odot{\bf{\Phi}}_{1})(y) \ominus_{gH} (-1\ \odot{\bf{\Phi}}_{1})(u)~ \forall~ y \in \mathcal{Y}.
\end{align}
 We get, from (\ref{bvd}) and (\ref{wqbgfre}) that
\begin{align*}
(-1) \odot{\bf{\Phi}}_{1\mathscr F}(u) \odot (y-u) \preceq{\bf{\Phi}}_{2}(y) \ominus_{gH}{\bf{\Phi}}_{2}(u)~\text{by Lemma 2.3 ~(ii)~of \cite{anshika}}.
    \end{align*}
This means that $((-1) \odot{\bf{\Phi}}_{1\mathscr F} (u),0) \in \partial^{w}{\bf{\Phi}}_{2}(u)$.
\end{proof}

\begin{theorem}
 Let $\emptyset \neq \mathcal{Y} \subseteq \mathbb{R}^n$. Let ${\bf{\Psi}}$ be $gH$-Fr\'{e}chet differentiable  at $u$ with the $gH$-Fr\'{e}chet derivative ${\bf{\Psi}}_{\mathscr F}(u)$. Then, ${\bf{\Psi}}$ has weak efficient solution at $u$ if and only if for any $y \in \mathcal{Y}$,
 \[ {\bf{\Psi}}_{\mathscr F}(u)^{\top} \odot (y-u)=\textbf 0.\]
\end{theorem}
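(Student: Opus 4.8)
The plan is to obtain both implications from a single inequality supplied by the theorem immediately preceding this one, supplemented by a two-sided directional limit argument; for the latter I use the fact that the $gH$-Fr\'echet derivative is defined only on an open domain, so that $\mathcal{Y}$ is open and $u$ is an interior point. Two preliminary observations will be used throughout. First, since ${\bf{\Psi}}$ is $gH$-Fr\'echet differentiable at $u$, the preceding theorem (taken with $c=0$) gives $({\bf{\Psi}}_{\mathscr F}(u),0)\in\partial^w{\bf{\Psi}}(u)$, that is,
\[
{\bf{\Psi}}_{\mathscr F}(u)^\top\odot(y-u)\preceq{\bf{\Psi}}(y)\ominus_{gH}{\bf{\Psi}}(u)\quad\forall\,y\in\mathcal Y,
\]
which I will call the key inequality. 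Second, reading off the explicit formula for $\ominus_{gH}$ in Definition \ref{gh_difference} together with the order in Definition \ref{interval_dominance}, one has $\textbf 0\preceq\textbf P\ominus_{gH}\textbf Q\iff\textbf Q\preceq\textbf P$ for all $\textbf P,\textbf Q\in I(\mathbb R)$.

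For sufficiency I would substitute the hypothesis ${\bf{\Psi}}_{\mathscr F}(u)^\top\odot(y-u)=\textbf 0$ into the left-hand side of the key inequality. This immediately yields $\textbf 0\preceq{\bf{\Psi}}(y)\ominus_{gH}{\bf{\Psi}}(u)$ for every $y\in\mathcal Y$, which by the second observation is exactly ${\bf{\Psi}}(u)\preceq{\bf{\Psi}}(y)$ for all $y\in\mathcal Y$; hence $u$ is a weak efficient point of ${\bf{\Psi}}$. This direction is essentially one line and presents no difficulty.

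For necessity, suppose $u$ is weak efficient, so $\textbf 0\preceq{\bf{\Psi}}(y)\ominus_{gH}{\bf{\Psi}}(u)$ for all $y\in\mathcal Y$. Fix $y\in\mathcal Y$ and set $d=y-u$. Because $\mathcal Y$ is open, there is $\beta_0>0$ with $u\pm\beta d\in\mathcal Y$ for all $\beta\in(0,\beta_0)$. Evaluating weak efficiency at $u+\beta d$ and at $u-\beta d$ gives $\textbf 0\preceq{\bf{\Psi}}(u\pm\beta d)\ominus_{gH}{\bf{\Psi}}(u)$; dividing by $\beta>0$ and letting $\beta\to 0^+$, the $gH$-Fr\'echet limit (taken along $h=\pm\beta d$) identifies $\tfrac1\beta({\bf{\Psi}}(u\pm\beta d)\ominus_{gH}{\bf{\Psi}}(u))$ with ${\bf{\Psi}}_{\mathscr F}(u)^\top\odot(\pm d)$. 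Since the order $\preceq$ is preserved under these limits, I obtain $\textbf 0\preceq{\bf{\Psi}}_{\mathscr F}(u)^\top\odot d$ and $\textbf 0\preceq{\bf{\Psi}}_{\mathscr F}(u)^\top\odot(-d)=-1\odot\left({\bf{\Psi}}_{\mathscr F}(u)^\top\odot d\right)$. Writing ${\bf{\Psi}}_{\mathscr F}(u)^\top\odot d=[\underline a,\overline a]$, the first relation forces $\underline a,\overline a\geq 0$ and the second forces $\underline a,\overline a\leq 0$, so ${\bf{\Psi}}_{\mathscr F}(u)^\top\odot(y-u)=\textbf 0$. As $y\in\mathcal Y$ was arbitrary, the stated condition holds.

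The routine parts (the endpoint bookkeeping, the sign rule ${\bf{\Psi}}_{\mathscr F}(u)^\top\odot(-d)=-1\odot({\bf{\Psi}}_{\mathscr F}(u)^\top\odot d)$ from linearity of the derivative, and preservation of $\preceq$ under limits via continuity of interval endpoints) I would dispatch quickly. The genuinely delicate point is the necessity argument: it rests on being able to approach $u$ from the two opposite directions $\pm d$, legitimate precisely because $gH$-Fr\'echet differentiability presupposes an open domain, and on passing from the Fr\'echet definition to the one-sided directional limit $\tfrac1\beta({\bf{\Psi}}(u+\beta d)\ominus_{gH}{\bf{\Psi}}(u))\to{\bf{\Psi}}_{\mathscr F}(u)^\top\odot d$, which is where I would concentrate the most care.
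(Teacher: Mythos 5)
Your proof is correct within the paper's framework, and your sufficiency direction is essentially the paper's own: both reduce to the global inequality ${\bf{\Psi}}_{\mathscr F}(u)^{\top}\odot(y-u)\preceq{\bf{\Psi}}(y)\ominus_{gH}{\bf{\Psi}}(u)$, obtained from the fact that the $gH$-Fr\'echet derivative is a subgradient (you cite the preceding theorem with $c=0$; the paper invokes ${\bf{\Psi}}_{\mathscr F}(u)\in\partial^{-}_{\mathscr F}{\bf{\Psi}}(u)$, which amounts to the same inequality), followed by the equivalence $\textbf 0\preceq\textbf P\ominus_{gH}\textbf Q\iff\textbf Q\preceq\textbf P$. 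The necessity direction is where you genuinely diverge. The paper fixes $y$, substitutes $h=\lambda(y-u)$ into the Fr\'echet limit, and runs a norm squeeze: weak efficiency together with Lemma \ref{yure}(i) is used to bound $\lVert\textbf 0\ominus_{gH}\lambda\odot\{{\bf{\Psi}}_{\mathscr F}(u)^{\top}\odot(y-u)\}\rVert_{I(\mathbb R)}/\lVert\lambda(y-u)\rVert$ by a quantity tending to $0$, forcing $\lVert{\bf{\Psi}}_{\mathscr F}(u)^{\top}\odot(y-u)\rVert_{I(\mathbb R)}=0$. You instead exploit that $u$ is an interior point (legitimate, since the paper's definition of $gH$-Fr\'echet differentiability presupposes an open domain), approach along both directions $\pm d$, pass to the one-sided directional (G\^ateaux) limit, and conclude from $\textbf 0\preceq\textbf A$ and $\textbf 0\preceq(-1)\odot\textbf A$ that $\textbf A=\textbf 0$. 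Your route buys rigor: the paper's decisive step silently passes from an interval dominance $\textbf A\preceq\textbf B$ to the norm inequality $\lVert\textbf A\rVert_{I(\mathbb R)}\le\lVert\textbf B\rVert_{I(\mathbb R)}$, which is false in general (e.g., $[-5,-5]\preceq[0,0]$ yet $5>0$), whereas your order-plus-limits argument avoids norms entirely. The price is the explicit two-sided access to $u$, which the paper's one-ray argument does not formally require---though the paper, too, evaluates weak efficiency at the points $u+\lambda(y-u)$, which must lie in $\mathcal Y$, so some interiority (or at least star-shapedness) of $\mathcal Y$ at $u$ is implicitly assumed there as well.
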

\begin{proof}
If ${\bf{\Psi}}$ has a weak efficient point at $u$, then
\begin{align*}
       &   {\bf{\Psi}}(u)\preceq {{\bf{\Psi}}}(y)\\
    \text{or}, ~ & \textbf 0 \preceq {{\bf{\Psi}}}(y) \ominus_{gH} {\bf{\Psi}}(u),\text{ by Lemma 2.1}\text{ of \cite{Ghosh2019derivative}}.
     \end{align*}
By $gH$-Fr\'{e}chet differentiability of $\bf{\Psi}$ at $u$, we get
\[\lim_{\substack{%
\lVert h \rVert \to 0}}\frac{\lVert ({\bf{\Psi}}(u+h)\ominus_{gH} {{\bf{\Psi}}}(u)) \ominus_{gH} {\bf{\Psi}}_{\mathscr F}(u)^\top\odot h\rVert_{I(\mathbb{R})}}{\lVert h\rVert}=0.
\]
If we take $h= \lambda(y-u)$, then
\begin{align}
\lim_{\substack{%
 \lambda \to 0}}\frac{\lVert ({\bf{\Psi}}(u+\lambda(y-u))\ominus_{gH} {\bf{\Psi}}(u)) \ominus_{gH}{\bf{\Psi}}_{\mathscr F}(u)^\top \odot \{\lambda(y-u)\}\rVert_{I(\mathbb{R})}}{\lVert \lambda(y-u)\rVert}=0. \label{hh}
\end{align}
Since $u$ is a weak efficient point of ${\bf{\Psi}}$, from (\ref{hh}) we have
\begin{eqnarray*}
&&\lim_{\substack{%
\lambda \to 0}}\frac{\lVert \textbf 0\ominus_{gH}\lambda \odot \{{\bf{\Psi}}_{\mathscr F}(u)^\top \odot (y-u)\}\rVert_{I(\mathbb{R})}}
{\lVert \lambda (y-u)\rVert} \leq 0 ~ \text{by~(\ref{3_1})} ~\text{of Lemma~\ref{yure}}\\
&\implies&   \lim_{\substack{%
\lambda \to 0}}\frac{\lVert \lambda \odot \{{\bf{\Psi}}_{\mathscr F}(u)^\top \odot (y-u)\}\rVert_{I(\mathbb{R})}}
{\lVert \lambda (y-u)\rVert} \leq 0\nonumber\\
&\implies& \lim_{\substack{%
\lambda \to 0}}\frac{\lambda\lVert{\bf{\Psi}}_{\mathscr F}(u)^\top \odot (y-u)\rVert_{I(\mathbb{R})}}
{\lambda\lVert (y-u)\rVert} \leq 0\nonumber.
\end{eqnarray*}
Since norm gives non-negative value,
\begin{align*}
 \frac{1}{\lVert y-u\rVert}\odot\{{\bf{\Psi}}_{\mathscr F}(u)^\top\odot (y-u) \}=\textbf 0.
\end{align*}
Thus, we obtain
\begin{align*}
 &{\bf{\Psi}}_{\mathscr F}(u)^\top \odot(y-u) =\textbf 0~\text{for any}~y \in \mathcal Y.
\end{align*}
To show the reverse part, we suppose that ${\bf{\Psi}}_{\mathscr F}(u)^\top \odot(y-u) =\textbf 0$ for all $y$. Then, we have ${\bf{\Psi}}_{\mathscr{F}}(u) \in \partial^{-}_{\mathscr F}  {\bf{\Psi}}(u)$ and this clearly yields
\begin{align*}
 &\textbf 0 ={\bf{\Psi}}_{\mathscr F}(u)^{\top} \odot (y-u)\preceq {{\bf{\Psi}}}(y) \ominus_{gH} {\bf{\Psi}}(u) \\
\implies  & {\bf{\Psi}}(u) \preceq {{\bf{\Psi}}}(y) ~ \text{by (ii) of Lemma 2.1} ~ \text{in \cite{Ghosh2019derivative}},
\end{align*}
and this means that $u$ is weak efficient point of $\textbf T$.
\end{proof}

\begin{theorem}\label{opu}
Let $\emptyset \neq \mathcal{Y} \subseteq \mathbb{R}^n$. If ${\bf{\Psi}}$ is $gH$-Fr\'{e}chet differentiable at $u$, then ${\bf{\Psi}}$ is $gH$-weak subdifferentiable at $u$ if and only if ${\bf{\Psi}}_{\mathscr F}(u)$ is  $gH$-weak subdifferentiable at $0 \in \mathcal Y$, and
\[  \partial^{w}({\bf{\Psi}}(u))= \partial^{w}({\bf{\Psi}}_{\mathscr F}(u )(0)).
\]
\end{theorem}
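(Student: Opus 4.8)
The plan is to establish the set equality $\partial^{w}{\bf{\Psi}}(u) = \partial^{w}\big({\bf{\Psi}}_{\mathscr F}(u)\big)(0)$ by proving the two inclusions separately; the asserted ``if and only if'' for $gH$-weak subdifferentiability then follows automatically, since two equal sets are nonempty together. Throughout I would exploit two consequences of $gH$-Fr\'echet differentiability: first, that it forces $gH$-G\^ateaux differentiability with ${\bf{\Psi}}_{\mathscr G}(u)(h) = {\bf{\Psi}}_{\mathscr F}(u)^{\top}\odot h$, and second, that the linear map ${\bf{\Psi}}_{\mathscr F}(u)$ satisfies ${\bf{\Psi}}_{\mathscr F}(u)(0)=\textbf 0$, so that its $gH$-weak subgradient inequality at $0$ reads $\widehat{\textbf G^w}^{\top}\odot h \ominus_{gH} c\lVert h\rVert \preceq {\bf{\Psi}}_{\mathscr F}(u)^{\top}\odot h$ for all $h$.

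For the inclusion $\partial^{w}{\bf{\Psi}}(u)\subseteq\partial^{w}\big({\bf{\Psi}}_{\mathscr F}(u)\big)(0)$, I would take an arbitrary $(\widehat{\textbf G^w},c)\in\partial^{w}{\bf{\Psi}}(u)$ and localise the defining inequality (\ref{mnbvc}) along rays. Fixing $h\in\mathbb R^n$ and using that $\mathcal Y$ is open, choose $\beta>0$ small enough that $u+\beta h\in\mathcal Y$; setting $y=u+\beta h$ gives
\[ \widehat{\textbf G^w}^{\top}\odot(\beta h)\ominus_{gH} c\lVert \beta h\rVert \preceq {\bf{\Psi}}(u+\beta h)\ominus_{gH}{\bf{\Psi}}(u). \]
Multiplying by the positive scalar $\tfrac{1}{\beta}$ (which distributes over $\ominus_{gH}$ and preserves $\preceq$) collapses the left-hand side to the $\beta$-independent quantity $\widehat{\textbf G^w}^{\top}\odot h\ominus_{gH}c\lVert h\rVert$, while the right-hand side equals $\tfrac{1}{\beta}\odot({\bf{\Psi}}(u+\beta h)\ominus_{gH}{\bf{\Psi}}(u))$. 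Letting $\beta\to0^+$, the right-hand side tends to ${\bf{\Psi}}_{\mathscr G}(u)(h)={\bf{\Psi}}_{\mathscr F}(u)^{\top}\odot h$, and since $\preceq$ is preserved under limits (the interval endpoints converge and $\le$ is closed, cf.\ Remark~\ref{uparmk}), I obtain $\widehat{\textbf G^w}^{\top}\odot h\ominus_{gH}c\lVert h\rVert\preceq {\bf{\Psi}}_{\mathscr F}(u)^{\top}\odot h$ for every $h$, i.e.\ $(\widehat{\textbf G^w},c)\in\partial^{w}\big({\bf{\Psi}}_{\mathscr F}(u)\big)(0)$.

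For the reverse inclusion, let $(\widehat{\textbf G^w},c)\in\partial^{w}\big({\bf{\Psi}}_{\mathscr F}(u)\big)(0)$, so that the linearised inequality holds for all $h$; specialising to $h=y-u$ gives $\widehat{\textbf G^w}^{\top}\odot(y-u)\ominus_{gH}c\lVert y-u\rVert\preceq {\bf{\Psi}}_{\mathscr F}(u)^{\top}\odot(y-u)$. On the other hand, $gH$-Fr\'echet differentiability yields ${\bf{\Psi}}_{\mathscr F}(u)\in\partial^{-}_{\mathscr F}{\bf{\Psi}}(u)$ (exactly as in the already-proved fact that $\{({\bf{\Psi}}_{\mathscr F}(u),c):c\ge0\}\subset\partial^{w}{\bf{\Psi}}(u)$), which supplies ${\bf{\Psi}}_{\mathscr F}(u)^{\top}\odot(y-u)\preceq{\bf{\Psi}}(y)\ominus_{gH}{\bf{\Psi}}(u)$ for all $y\in\mathcal Y$. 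Chaining the two displays by transitivity of $\preceq$ reproduces (\ref{mnbvc}) with the very same pair $(\widehat{\textbf G^w},c)$, so $(\widehat{\textbf G^w},c)\in\partial^{w}{\bf{\Psi}}(u)$. The two inclusions give the claimed equality, and hence the equivalence of $gH$-weak subdifferentiability of ${\bf{\Psi}}$ at $u$ and of ${\bf{\Psi}}_{\mathscr F}(u)$ at $0$.

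The step I would write out most carefully — and which I expect to be the main obstacle — is the passage to the limit $\beta\to0^+$ in the forward inclusion. Here I must justify that the $gH$-G\^ateaux limit genuinely exists and equals ${\bf{\Psi}}_{\mathscr F}(u)^{\top}\odot h$ (invoking that Fr\'echet differentiability implies G\^ateaux differentiability), and that the dominance $\preceq$ survives the limit, which is precisely where the coordinatewise endpoint convergence recorded in Remark~\ref{uparmk} is needed. By comparison the reverse inclusion is routine, resting only on the previously established identification of the $gH$-Fr\'echet derivative as a $gH$-Fr\'echet lower subgradient.
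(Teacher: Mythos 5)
Your proposal is correct and follows essentially the same route as the paper's own proof: the forward inclusion restricts the weak-subgradient inequality to rays $y=u+\beta h$ and passes to the limit $\beta\to 0^{+}$, identifying the difference quotient with ${\bf{\Psi}}_{\mathscr F}(u)^{\top}\odot h$ via Fr\'echet-implies-G\^ateaux (the paper does the same thing, just packaged as subtracting the linearization inside the Fr\'echet limit and invoking Lemma \ref{yure}(ii)), while the reverse inclusion in both cases chains the linearized inequality with ${\bf{\Psi}}_{\mathscr F}(u)^{\top}\odot(y-u)\preceq{\bf{\Psi}}(y)\ominus_{gH}{\bf{\Psi}}(u)$ coming from ${\bf{\Psi}}_{\mathscr F}(u)\in\partial^{-}_{\mathscr F}{\bf{\Psi}}(u)$. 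The only cosmetic difference is that you make explicit the two points the paper leaves implicit (preservation of $\preceq$ under endpoint-wise limits, and openness of $\mathcal Y$ to stay inside the domain along the ray), which is a welcome tightening rather than a deviation.
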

\begin{proof}
By the $gH$-Fr\'{e}chet differentiability of ${\bf{\Psi}}$ at $u$, we have
\[\lim_{\substack{%
\lVert h \rVert \to 0}}\frac{1}{\lVert h\rVert} \odot\{ ({\bf{\Psi}}(u+h)\ominus_{gH} {\bf{\Psi}}(u)) \ominus_{gH}{\bf{\Psi}}_{\mathscr F}(u)^\top \odot h\}=\textbf 0.
\]
 Inserting $h = \lambda \odot (y-u)$, by $gH$-weak subdifferentiability of ${\bf{\Psi}}$ at $u$, there exists ($\widehat{\textbf{G}^w},c)\in \partial^{w}{\bf{\Psi}}(u)$ such that for any $y \in  \mathcal{Y}$,
\[ \widehat{\textbf G^w}^{\top} \odot (y-u) \ominus_{gH} c \lVert y-u \rVert \preceq {{\bf{\Psi}}}(y) \ominus_{gH} \textbf{T}(u).
\]
Hence, \[\lim_{\substack{%
\lambda \to 0}}\frac{1}{\lVert \lambda (y-u)\rVert}\odot\{({\bf {\Psi}}(u+ \lambda (y-u))\ominus_{gH} {\bf{\Psi}}(u))\ominus_{gH}{\bf{\Psi}}_{\mathscr F}(u)^\top\odot \lambda(y-u)\}=\textbf 0\]
and by $gH$-weak subdifferentiability of ${\bf{\Psi}}$ at $u$, we get, for any $y \in \mathcal{Y}$ that
\begin{align*}
&\lim_{\substack{%
\lambda \to 0}} \frac{1}{\lVert \lambda (y-u)\rVert} \odot\bigg\{(\widehat{\textbf G^w}^{\top}\odot \lambda (y-u) \ominus_{gH} \lambda c \lVert y-u \rVert )\ominus_{gH}{\bf{\Psi}}_{\mathscr F}(u)^\top \odot \lambda(y-u)\bigg\} \preceq \textbf 0,~ \\  & (\text{by~(\ref{cv}) of Lemma \ref{yure}})\\
\implies &  \frac{1}{\lVert  (y-u)\rVert}\odot\{(\widehat{\textbf G^w}^{\top}\odot  (y-u) \ominus_{gH}  c \lVert y-u \rVert )\ominus_{gH}{\bf{\Psi}}_{\mathscr F}(u)^\top \odot (y-u)\} \preceq \textbf 0.
\end{align*}
Therefore,  \[ \widehat{\textbf G^w}^{\top} \odot (y-u) \ominus_{gH} c \lVert y-u \rVert \ominus_{gH} {\bf{\Psi}}_{\mathscr F}(u)^\top \odot (y-u) \preceq \textbf 0
~~ \forall~ y \in \mathcal{Y}\]
and so by letting  $z= y-u$, we obtain
  \begin{align}
  \label{wrr} \widehat{\textbf G^w}^{\top} \odot z \ominus_{gH}c \lVert z \rVert
  \preceq{\bf{\Psi}}_{\mathscr F}(u)^\top \odot z~ \forall~ z \in \mathcal{Y}.
  \end{align}
  Note that the $gH$-Fr\'{e}chet derivative ${\bf{\Psi}}_{\mathscr F}(u)$ is also $gH$-G\'ateaux derivative as in (see Theorem 5.2 of \cite{Ghosh2019derivative}). Hence, it is a linear IVF as in Definition 4.1 of \cite{Ghosh2019derivative}. By this fact, we have ${\bf{\Psi}}_{\mathscr F}(u)^\top \odot (0)=\textbf 0$. Then, the inequality (\ref{wrr}) implies that $(\widehat{\textbf G^w},c) \in \partial^{w}({\bf{\Psi}}_{\mathscr F}(u)(0))$.\\
 Conversely, let $(\widehat{\textbf G^w}, c) \in \partial^{w}({\bf{\Psi}}_{\mathscr F}(u)(0))$. Then, we can write
  \begin{align}
 &  \widehat{\textbf G^w}^{\top} \odot y \ominus_{gH}c \lVert y \rVert \preceq{\bf{\Psi}}_{\mathscr F}(u)^{\top}\odot y ~\forall~ y \in \mathcal{Y}\nonumber\\
  \implies&  \widehat{\textbf G^w}^{\top} \odot (y-u) \ominus_{gH}c \lVert y-u \rVert\preceq{\bf{\Psi}}_{\mathscr F}(u)^\top\odot(y-u)\nonumber~ \forall~ y \in \mathcal{Y}.
  \end{align}
  Since ${\bf{\Psi}}$ has $gH$-Fr\'{e}chet derivative ${\bf{\Psi}}_{\mathscr F}(u)$ and it is also a $gH$-subgradient, it follows that
  \[{\bf{\Psi}}_{\mathscr F}(y)^{\top} \odot (y-u) \preceq {{\bf{\Psi}}}(y) \ominus_{gH} {\bf{\Psi}}(u)~ \forall~ y \in \mathcal{Y}.
  \]
  Then, $ \widehat{\textbf G^w}^{\top}\odot (y-u) \ominus_{gH} c \lVert y-u\rVert\preceq{{\bf{\Psi}}}(y) \ominus
  _{gH} {\bf{\Psi}}(u)  $. Hence the proof is complete.
 \end{proof}

\begin{theorem} \label{dsa}
Let $\emptyset \neq \mathcal{Y} \subseteq \mathbb{R}^n$. Let ${\bf{\Phi}}$ is $gH$-Fr\'{e}chet differentiable at $u$. If $u$ is a weak efficient point of ${\bf{\Phi}}$, then
\[ \text{sup}\left\{ \widehat{\textbf G^w}^{\top}\odot (y-u) \ominus_{gH} c \lVert y-u\rVert : (\widehat{\textbf G^w},c)\in \partial^{w}{\bf{\Phi}}(u)\right\}=\textbf {0}.
\]
\end{theorem}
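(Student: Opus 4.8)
The plan is to fix an arbitrary $y\in\mathcal Y$, abbreviate
\[\mathcal S=\left\{\widehat{\textbf G^w}^{\top}\odot (y-u)\ominus_{gH} c\lVert y-u\rVert:(\widehat{\textbf G^w},c)\in\partial^{w}{\bf{\Phi}}(u)\right\},\]
and establish the two dominances $\textbf 0\preceq\sup\mathcal S$ and $\sup\mathcal S\preceq\textbf 0$ separately. For the first, since $u$ is a weak efficient point, Definition \ref{Weak efficient_point_def} gives $\textbf 0\preceq{\bf{\Phi}}(y)\ominus_{gH}{\bf{\Phi}}(u)$ for every $y\in\mathcal Y$; hence the trivial pair $(\widehat{\textbf 0},0)\in I(\mathbb R)^n\times\mathbb R_{+}$ satisfies (\ref{mnbvc}) and so $(\widehat{\textbf 0},0)\in\partial^{w}{\bf{\Phi}}(u)$. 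The expression defining $\mathcal S$, evaluated at $(\widehat{\textbf 0},0)$, is precisely $\textbf 0$, so $\textbf 0\in\mathcal S$ and therefore $\textbf 0\preceq\sup\mathcal S$.

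For the reverse dominance I would show that $\textbf 0$ is an upper bound of $\mathcal S$. Fix $(\widehat{\textbf G^w},c)\in\partial^{w}{\bf{\Phi}}(u)$ and insert the points $u+t(y-u)$, $t\in(0,1]$, into the weak subgradient inequality (\ref{mnbvc}). Using positive homogeneity of $\odot$, of $\lVert\cdot\rVert$, and of $\ominus_{gH}$, and then multiplying by $\tfrac{1}{t}$, I obtain
\[\widehat{\textbf G^w}^{\top}\odot (y-u)\ominus_{gH} c\lVert y-u\rVert\preceq\tfrac{1}{t}\odot\big({\bf{\Phi}}(u+t(y-u))\ominus_{gH}{\bf{\Phi}}(u)\big).\]
As $t\to 0^{+}$, the right-hand side converges to the $gH$-G\'ateaux limit ${\bf{\Phi}}_{\mathscr F}(u)^{\top}\odot(y-u)$, because $gH$-Fr\'echet differentiability implies $gH$-G\'ateaux differentiability with the same derivative value.

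Now, because $u$ is a weak efficient point and ${\bf{\Phi}}$ is $gH$-Fr\'echet differentiable, the earlier theorem characterizing weak efficiency of a $gH$-Fr\'echet differentiable IVF through the relation ${\bf{\Phi}}_{\mathscr F}(u)^{\top}\odot(y-u)=\textbf 0$ applies, giving ${\bf{\Phi}}_{\mathscr F}(u)^{\top}\odot(y-u)=\textbf 0$. Passing to the limit in the displayed dominance preserves $\preceq$, since by Remark \ref{uparmk} this amounts to comparing the lower and upper endpoints, each of which converges; hence
\[\widehat{\textbf G^w}^{\top}\odot (y-u)\ominus_{gH} c\lVert y-u\rVert\preceq\textbf 0.\]
As $(\widehat{\textbf G^w},c)\in\partial^{w}{\bf{\Phi}}(u)$ was arbitrary, $\textbf 0$ is an upper bound of $\mathcal S$, so $\sup\mathcal S\preceq\textbf 0$ by Definition \ref{lbound}. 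Combining the two dominances yields $\sup\mathcal S=\textbf 0$.

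The step I expect to be the main obstacle is this limiting argument: one must guarantee that the scaled points $u+t(y-u)$ lie in $\mathcal Y$ (automatic when $\mathcal Y$ is convex or $u$ is interior, as the $gH$-Fr\'echet framework presupposes), justify interchanging the limit with the $gH$-difference, and confirm that the dominance relation survives the passage to the limit, which I reduce to the separate convergence of the two endpoints. It is exactly the identity ${\bf{\Phi}}_{\mathscr F}(u)^{\top}\odot(y-u)=\textbf 0$, supplied by the characterization theorem, that pins the upper bound down to $\textbf 0$ rather than to some merely nonpositive interval; without weak efficiency the scaling argument alone would only bound $\mathcal S$ by ${\bf{\Phi}}_{\mathscr F}(u)^{\top}\odot(y-u)$.
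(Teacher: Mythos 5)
Your proposal is correct, and its two load-bearing ingredients are the same ones the paper uses: the inequality bounding every weak-subgradient expression by the Fr\'echet-derivative value, and the unnumbered characterization theorem (preceding Theorem \ref{opu}) asserting that weak efficiency of a $gH$-Fr\'echet differentiable IVF forces ${\bf{\Phi}}_{\mathscr F}(u)^{\top}\odot(y-u)=\textbf 0$. The assembly, however, differs in two respects. First, the paper proves the stronger intermediate identity $\sup\mathcal S={\bf{\Phi}}_{\mathscr F}(u)^{\top}\odot(y-u)$ \emph{without} invoking weak efficiency: it quotes inequality (\ref{wrr}) from Theorem \ref{opu} for the upper bound and uses the pair $({\bf{\Phi}}_{\mathscr F}(u),0)\in\partial^{w}{\bf{\Phi}}(u)$ as the attainment witness, only afterwards setting this value to $\textbf 0$ via the characterization theorem. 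You instead split into the two dominances and use weak efficiency in both halves: your lower-bound witness is the trivial pair $(\widehat{\textbf 0},0)$, whose membership in $\partial^{w}{\bf{\Phi}}(u)$ is exactly what weak efficiency buys (as in the first lines of Theorem \ref{ybnm}), and your upper bound re-derives the content of (\ref{wrr}) from scratch by the scaling argument $y\mapsto u+t(y-u)$, $t\to 0^{+}$, rather than citing Theorem \ref{opu}. What each buys: the paper's route yields the reusable fact that the supremum always equals the Fr\'echet value at any point of $gH$-Fr\'echet differentiability, while yours is more self-contained and elementary, and it makes explicit the hypotheses the paper leaves silent --- namely that the scaled points must lie in $\mathcal Y$ (guaranteed because the $gH$-Fr\'echet definition requires $\mathcal Y$ open) and that $\preceq$ survives the limit via endpointwise convergence; the paper's reliance on (\ref{wrr}) hides the identical issues inside Theorem \ref{opu}.
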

\begin{proof}
First, we show that
 \[{\bf{\Phi}}_{\mathscr F}(u)^{\top}\odot (y-u)= \text{sup}\left\{ \widehat{\textbf G^w}^{\top}\odot (y-u) \ominus_{gH} c \lVert y-u\rVert : (\widehat{\textbf G^w},c) \in \partial^{w} {\bf{\Phi}}(u)\right\}
 \] by which the desired equality can be easily proved.
 By $gH$-Fr\'echet differentiability of $\textbf{T}$ and by taking  the supremum on the inequality (\ref{wrr}), we obtain
\begin{align*}
  \sup_{\substack{%
  (\widehat{\textbf G^w}, c)\in \partial^{w}{\bf{\Phi}}(u)}}\{ \widehat{\textbf G^w}^{\top}\odot (y-u) \ominus_{gH} c \lVert y-u\rVert \}\preceq&\sup_{\substack{%
  (\widehat{\textbf G^w}, c)\in \partial^{w}\textbf{T}(u)}}\{{\bf{\Phi}}_{\mathscr F}(u)^{\top}\odot (y-u)\}  \nonumber
   \\
  ~~=&~~{\bf{\Phi}}_{\mathscr F}(u)^{\top}\odot (y-u).
  \end{align*}
 Since $({\bf{\Phi}}_{\mathscr F}(u),0) \in \partial^{w} \textbf{T}(y)$,
 \[{\bf{\Phi}}_{\mathscr F}(u)\odot (y-u) \in \left\{ \widehat{\textbf G^w}^{\top}\odot(y-u)\ominus_{gH} c \lVert y-u\rVert: (\widehat{\textbf G^w},c) \in \partial^{w}{\bf{\Phi}}(u) \right\}\]
  and hence the result follows.
\end{proof}

\section{Optimality for the difference of two IVFs}\label{section5}
In this section,
we consider the constrained IOP as below:
\begin{align}\label{yfhmmvc}
   \min_{ y \in {\mathcal Y}} ~\{{\bf{\Phi}}_{2}( y) \ominus_{gH}{\bf{\Phi}}_{1}(y) \},
 \end{align}
 where $\emptyset \neq \mathcal{Y} \subseteq \mathbb{R}^n$  and  ${\bf{\Phi}}_{1},{\bf{\Phi}}_{2}: {\mathcal Y} \to I(\mathbb{R})$ are two  IVFs. We are going to  study of weak efficiency condition for the IOP (\ref{yfhmmvc})  under some additional assumptions.

\begin{theorem}\label{tyr}
Let $\emptyset \neq \mathcal Y \subseteq \mathbb{R}^n$. Let ${\bf{\Phi}}_{1},{\bf{\Phi}}_{2}: \mathcal Y \rightarrow I(\mathbb{R}) $ be $gH$-weak subdifferentiable at $u$, which is a weak-efficient point of ${\bf{\Phi}}_{2} \ominus_{gH}{\bf{\Phi}}_{1}$. If   ${\bf{\Phi}}_{1}(u)={\bf{\Phi}}_{2}(u)$, then
\[ \partial^{w}{\bf{\Phi}}_{1}(u) \subset \partial^{w}{\bf{\Phi}}_{2} (u). \]
\end{theorem}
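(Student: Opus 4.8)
The plan is to prove the inclusion elementwise: I would take an arbitrary pair $(\widehat{\textbf{G}^w}, c) \in \partial^w {\bf{\Phi}}_1(u)$ and show it also lies in $\partial^w {\bf{\Phi}}_2(u)$. By Definition \ref{etuhj}, membership in $\partial^w {\bf{\Phi}}_1(u)$ means that for every $y \in \mathcal{Y}$,
\[ \widehat{\textbf{G}^w}^\top \odot (y-u) \ominus_{gH} c\lVert y-u\rVert \preceq {\bf{\Phi}}_1(y) \ominus_{gH} {\bf{\Phi}}_1(u). \]
The entire task then reduces to replacing ${\bf{\Phi}}_1$ by ${\bf{\Phi}}_2$ on the right-hand side, i.e. to comparing the two $gH$-differences ${\bf{\Phi}}_1(y)\ominus_{gH}{\bf{\Phi}}_1(u)$ and ${\bf{\Phi}}_2(y)\ominus_{gH}{\bf{\Phi}}_2(u)$.

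Next I would bring in the weak-efficiency hypothesis. Since $u$ is a weak efficient point of ${\bf{\Phi}}_2\ominus_{gH}{\bf{\Phi}}_1$, Definition \ref{Weak efficient_point_def} gives $({\bf{\Phi}}_2\ominus_{gH}{\bf{\Phi}}_1)(u)\preceq ({\bf{\Phi}}_2\ominus_{gH}{\bf{\Phi}}_1)(y)$ for all $y\in\mathcal Y$. The assumption ${\bf{\Phi}}_1(u)={\bf{\Phi}}_2(u)$ collapses the left-hand side, because by Definition \ref{gh_difference} we have ${\bf{\Phi}}_2(u)\ominus_{gH}{\bf{\Phi}}_1(u)={\bf{\Phi}}_1(u)\ominus_{gH}{\bf{\Phi}}_1(u)=\textbf 0$. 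Hence
\[ \textbf 0 \preceq {\bf{\Phi}}_2(y)\ominus_{gH}{\bf{\Phi}}_1(y)\quad\text{for all }y\in\mathcal Y, \]
and by the equivalence of dominance with nonnegativity of the $gH$-difference (Lemma 2.1 of \cite{Ghosh2019derivative}) this says precisely that ${\bf{\Phi}}_1(y)\preceq {\bf{\Phi}}_2(y)$ for every $y$.

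The final step is to push this pointwise dominance through the $gH$-difference with the common interval ${\bf{\Phi}}_1(u)={\bf{\Phi}}_2(u)$. Using monotonicity of the $gH$-difference in its first argument — that $\textbf P \preceq \textbf P'$ implies $\textbf P \ominus_{gH}\textbf Q \preceq \textbf P'\ominus_{gH}\textbf Q$ — with $\textbf P={\bf{\Phi}}_1(y)$, $\textbf P'={\bf{\Phi}}_2(y)$, and $\textbf Q={\bf{\Phi}}_1(u)={\bf{\Phi}}_2(u)$, I obtain ${\bf{\Phi}}_1(y)\ominus_{gH}{\bf{\Phi}}_1(u)\preceq {\bf{\Phi}}_2(y)\ominus_{gH}{\bf{\Phi}}_2(u)$. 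Chaining this with the starting inequality by transitivity of $\preceq$ yields
\[ \widehat{\textbf{G}^w}^\top \odot (y-u) \ominus_{gH} c\lVert y-u\rVert \preceq {\bf{\Phi}}_2(y)\ominus_{gH}{\bf{\Phi}}_2(u)\quad\text{for all }y\in\mathcal Y, \]
which is exactly $(\widehat{\textbf{G}^w},c)\in\partial^w{\bf{\Phi}}_2(u)$; since the pair was arbitrary, the desired inclusion $\partial^w{\bf{\Phi}}_1(u)\subset\partial^w{\bf{\Phi}}_2(u)$ follows.

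I expect the only step needing care to be the monotonicity of the $gH$-difference in the first argument, since it is not isolated as a standalone numbered lemma in the preliminaries. I would either invoke the comparison results of Lemma \ref{yure} (and the analogous Lemma 2.3 of \cite{anshika} already used elsewhere in the paper) or verify it in one line from the explicit formula $\textbf P\ominus_{gH}\textbf Q=[\min\{\underline p-\underline q,\overline p-\overline q\},\,\max\{\underline p-\underline q,\overline p-\overline q\}]$, together with the fact that $\min$ and $\max$ are nondecreasing in each coordinate. The remainder is a routine substitution and one application of transitivity.
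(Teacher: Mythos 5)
Your proposal is correct and follows essentially the same route as the paper's proof: both take a $gH$-weak subgradient of ${\bf{\Phi}}_{1}$ at $u$, use weak efficiency together with ${\bf{\Phi}}_{1}(u)={\bf{\Phi}}_{2}(u)$ to obtain $\textbf 0 \preceq ({\bf{\Phi}}_{2}\ominus_{gH}{\bf{\Phi}}_{1})(y)$, hence ${\bf{\Phi}}_{1}(y)\preceq{\bf{\Phi}}_{2}(y)$ for all $y$, then pass to ${\bf{\Phi}}_{1}(y)\ominus_{gH}{\bf{\Phi}}_{1}(u)\preceq{\bf{\Phi}}_{2}(y)\ominus_{gH}{\bf{\Phi}}_{2}(u)$ and conclude by transitivity and the definition of $\partial^{w}{\bf{\Phi}}_{2}(u)$. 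The only cosmetic difference is that you justify the monotonicity of the $gH$-difference in its first argument directly from the $\min$/$\max$ formula, whereas the paper cites it as Note 2 of \cite{anshika}.
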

\begin{proof}
The $gH$-weak subdifferentiability of ${\bf{\Phi}}_{1}$ at $u$ implies that $\partial^{w}{\bf{\Phi}}_{1}(u) $ is nonempty. Hence, there exists $(\widehat{\textbf U^w},c ) \in I(\mathbb{R}) \times \mathbb{R}_{+}$ such that
\begin{align}
   \widehat{\textbf U^w}^{\top} \odot (y-u) \ominus_{gH} c \lVert y-u \rVert \preceq{\bf{\Phi}}_{1}(y) \ominus_{gH}{\bf{\Phi}}_{1}(u) ~\text{for all}~ y \in \mathcal Y.
\end{align}
Since ${\bf{\Phi}}_{2} \ominus_{gH} {\bf{\Phi}}_{1}$ gets the weak efficiency value $\textbf 0$ at $u$, for any $y \in \mathcal Y$, we have
\begin{align}
   &  \textbf 0 \preceq({\bf {\Phi}}_{2}\ominus_{gH}{\bf{\Phi}}_{1})(y)  \nonumber\\
   \implies & \textbf 0 \preceq {\bf {\Phi}}_{2}(y)\ominus_{gH} {\bf {\Phi}}_{1}(y)\nonumber\\
 \implies  &  {\bf {\Phi}}_{1}(y)\preceq{\bf{\Phi}}_{2}(y) \nonumber ~\text{by Lemma 2.1(ii) of \cite{Ghosh2019derivative}} \\
  \implies & {\bf{\Phi}}_{1}(y) \ominus_{gH}{\bf{\Phi}}_{1}(u) \preceq{\bf{\Phi}}_{2}(y) \ominus_{gH}{\bf{\Phi}}_{2}(u)  \label{nxc}~\text{by Note 2 of \cite{anshika}}.
\end{align}
Consequently, the inequality~(\ref{nxc}) implies that
\[   \widehat{\textbf U^w}^{\top} \odot (y-u) \ominus_{gH} c\lVert y-u \rVert \preceq{\bf{\Phi}}_{2}(y) \ominus_{gH}{\bf{\Phi}}_{2}(u).
\]
This means $(\widehat{\textbf U^w},c )
\in \partial^{w}{\bf{\Phi}}_{2}(y)$. Hence, the result follows.\\
\end{proof}

\begin{note} If we had taken an efficient solution of ${\bf{\Phi}}_{2} \ominus_{gH}{\bf{\Phi}}_{1}$ instead of a weak efficient solution,
 the additional condition ${\bf {\Phi}}_{1}(u)={\bf{\Phi}}_{2}(u)$  becomes essential for  Theorem \ref{tyr} to hold. For instance, let two IVFs $\bf{\Phi}_{1}: \left[-\frac{1}{2}, \frac{1}{2}\right] \rightarrow I(\mathbb{R})$ and ${\bf{\Phi}}_{2}: \left[-\frac{1}{2},\frac{1}{2}\right] \rightarrow  I(\mathbb{R})$  be defined as
\[{\bf{\Phi}}_{1}(y) =[2 \lvert y\rvert, \lvert y\rvert +1] \text{ and } {\bf{\Phi}}_{2}(y)=[\lvert y \rvert, 2y^2+ \lvert y\rvert],\] respectively. Now, according to Theorem \ref{tyr}, $({\bf{\Phi}}_{2}\ominus_{gH}{\bf{\Phi}}_{1})(y)=[2y^2-1, -\lvert y\rvert]$, and $0$ is an efficient point of $({\bf{\Phi}}_{2} \ominus_{gH}{\bf{\Phi}}_{1})$ because $({\bf{\Phi}}_{2}\ominus_{gH}{\bf{\Phi}}_{1})(y)$ and $({\bf{\Phi}}_{2} \ominus_{gH}{\bf{\Phi}}_{1})(0)$ are not comparable for all $y \in \left[-\frac{1}{2}, \frac{1}{2}\right]$.
Note that
\begin{align*}
&\partial^{w}{\bf{\Phi}}_{1}(0)=\{({\textbf K^{w}_{1}},c_{1}): [-2,-1] \preceq ({\textbf K^{w}_{1}}\oplus c_{1}),({\textbf K^{w}_{1}}\ominus_{gH}c_{1}) \preceq [1,2] \}\\
~\text{and}~&\partial^{w}{\bf{\Phi}}_{2}(0)=\{ ({\textbf K^{w}_{2}},c_{2}):[-1,-1]\preceq ({\textbf K^{w}_{2}}\oplus c_{2}), ({\textbf K^{w}_{2}}\ominus_{gH}c_{2}) \preceq [1,1]\}
\end{align*}
Hence, $\partial^{w}{\bf{\Phi}}_{1}(0) \not\subset \partial^{w}{\bf{\Phi}}_{2}(0) $. So, ${\bf{\Phi}}_{1}(u)={\bf{\Phi}}_{2}(u)$ is an essential condition.\\
\end{note}

As the restriction ${\bf{\Phi}}_{1}(u)={\bf{\Phi}}_{2}(u)$ is a bit restrictive, in the next result, we give more flexible condition for which the inclusion in Theorem \ref{tyr} holds.

\begin{theorem}\label{plk}
Let $\emptyset \neq \mathcal{Y} \subseteq \mathbb{R}^n$. Let ${\bf{\Phi}}_{1},{\bf{\Phi}}_{2}$  have $gH$-weak subdifferential at $u \in \mathcal Y $, and ${\bf{\Phi}}_{2}\ominus_{gH}{\bf{\Phi}}_{1}$ attains  weak efficient solution at  $u$. Then,
\begin{align}\label{dini2}
 \partial^{w}{\bf{\Phi}}_{1}(u) \subset \partial^{w}{\bf{\Phi}}_{2}(u)
\end{align}
provided that $w({\bf{\Phi}}_{1}(y)) \geq w({\bf{\Phi}}_{2}(y))$ for $y \in \mathcal{Y}$ or $w({\bf{\Phi}}_{1}(y)) \leq w({\bf{\Phi}}_{2}(y))$ for $y \in \mathcal{Y}$, where $w(\textbf A)$ is the width of the interval $\textbf A \in I(\mathbb R)$.
\end{theorem}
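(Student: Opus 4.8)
The plan is to reduce the claimed set inclusion to a single monotonicity statement about the $gH$-differences of the two IVFs, and then to extract that statement from the weak efficiency hypothesis by invoking the explicit endpoint formula for $\ominus_{gH}$ in Definition \ref{gh_difference}. The width hypothesis enters only to pin down the orientation of one of those $gH$-differences uniformly in $y$.

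First I would fix an arbitrary $(\widehat{\textbf U^w}, c) \in \partial^{w}{\bf{\Phi}}_{1}(u)$, so that
\[ \widehat{\textbf U^w}^{\top} \odot (y-u) \ominus_{gH} c \lVert y-u \rVert \preceq {\bf{\Phi}}_{1}(y) \ominus_{gH} {\bf{\Phi}}_{1}(u) \quad \text{for all } y \in \mathcal Y. \]
Because $\preceq$ is transitive, it is enough to establish the difference inequality
\[ {\bf{\Phi}}_{1}(y) \ominus_{gH} {\bf{\Phi}}_{1}(u) \preceq {\bf{\Phi}}_{2}(y) \ominus_{gH} {\bf{\Phi}}_{2}(u) \quad \text{for all } y \in \mathcal Y, \]
since chaining it with the previous display places $(\widehat{\textbf U^w}, c)$ in $\partial^{w}{\bf{\Phi}}_{2}(u)$, which is exactly the assertion (\ref{dini2}). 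Note that, unlike Theorem \ref{tyr}, I do not assume ${\bf{\Phi}}_{1}(u) = {\bf{\Phi}}_{2}(u)$, so the value of ${\bf{\Phi}}_{2}\ominus_{gH}{\bf{\Phi}}_{1}$ at $u$ need not be $\textbf 0$ and I must use the weak efficiency inequality directly rather than passing through ${\bf{\Phi}}_{1}(y)\preceq{\bf{\Phi}}_{2}(y)$.

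To prove the difference inequality I would write ${\bf{\Phi}}_i(y) = [\underline{\phi}_i(y), \overline{\phi}_i(y)]$ and translate the weak efficiency of ${\bf{\Phi}}_{2} \ominus_{gH} {\bf{\Phi}}_{1}$ at $u$, namely $({\bf{\Phi}}_{2} \ominus_{gH} {\bf{\Phi}}_{1})(u) \preceq ({\bf{\Phi}}_{2} \ominus_{gH} {\bf{\Phi}}_{1})(y)$, into scalar inequalities. Here the width hypothesis is decisive: under $w({\bf{\Phi}}_{1}(y)) \geq w({\bf{\Phi}}_{2}(y))$ the difference $({\bf{\Phi}}_{2} \ominus_{gH} {\bf{\Phi}}_{1})(y)$ equals $[\overline{\phi}_2(y)-\overline{\phi}_1(y),\, \underline{\phi}_2(y)-\underline{\phi}_1(y)]$ for every $y$, while under the reverse width inequality it equals $[\underline{\phi}_2(y)-\underline{\phi}_1(y),\, \overline{\phi}_2(y)-\overline{\phi}_1(y)]$ for every $y$. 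Since the same endpoint pattern then holds simultaneously at $u$ and at $y$, comparing the lower and upper endpoints in the efficiency inequality yields, in either case, the two decoupled estimates
\[ \underline{\phi}_1(y)-\underline{\phi}_1(u) \leq \underline{\phi}_2(y)-\underline{\phi}_2(u) \quad \text{and} \quad \overline{\phi}_1(y)-\overline{\phi}_1(u) \leq \overline{\phi}_2(y)-\overline{\phi}_2(u). \]

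Finally I would set $s_i = \underline{\phi}_i(y)-\underline{\phi}_i(u)$ and $t_i = \overline{\phi}_i(y)-\overline{\phi}_i(u)$, so that ${\bf{\Phi}}_i(y) \ominus_{gH} {\bf{\Phi}}_i(u) = [\min\{s_i,t_i\}, \max\{s_i,t_i\}]$ by Definition \ref{gh_difference}. The two estimates above read $s_1 \leq s_2$ and $t_1 \leq t_2$, whence the elementary monotonicity of $\min$ and $\max$ gives $\min\{s_1,t_1\} \leq \min\{s_2,t_2\}$ together with $\max\{s_1,t_1\} \leq \max\{s_2,t_2\}$, which is precisely the required difference inequality. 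I expect the main obstacle to be the orientation bookkeeping of the previous paragraph: were the width relation permitted to flip between $u$ and $y$, the two occurrences of $({\bf{\Phi}}_{2} \ominus_{gH} {\bf{\Phi}}_{1})(\cdot)$ would carry different endpoint patterns and the efficiency inequality would fail to decouple into the two scalar estimates. Thus the argument relies essentially on the width condition holding uniformly over all of $\mathcal Y$, and I would take care to verify that both admissible width conditions lead to the same pair $s_1 \leq s_2$, $t_1 \leq t_2$.
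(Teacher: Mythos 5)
Your proposal is correct and follows essentially the same route as the paper's proof: both reduce the inclusion to the single inequality ${\bf{\Phi}}_{1}(y)\ominus_{gH}{\bf{\Phi}}_{1}(u)\preceq{\bf{\Phi}}_{2}(y)\ominus_{gH}{\bf{\Phi}}_{2}(u)$ via transitivity of $\preceq$, use the uniform width hypothesis to fix the endpoint pattern of $({\bf{\Phi}}_{2}\ominus_{gH}{\bf{\Phi}}_{1})(\cdot)$ at both $u$ and $y$, and decouple the weak efficiency inequality into the two scalar estimates $\underline{\phi}_1(y)-\underline{\phi}_1(u)\leq\underline{\phi}_2(y)-\underline{\phi}_2(u)$ and $\overline{\phi}_1(y)-\overline{\phi}_1(u)\leq\overline{\phi}_2(y)-\overline{\phi}_2(u)$. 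Your one-line appeal to the monotonicity of $\min$ and $\max$ cleanly replaces the paper's two explicit subcases, but the underlying argument is identical.
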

\begin{proof}
The $gH$-weak subdifferentiability of ${\bf{\Phi}}_{1}$ at $u$ implies that $\partial^{w} {\bf{\Phi}}_{1}(u) $ is nonempty. Hence, there exists $(\widehat{\textbf U^w},c ) \in I(\mathbb{R}) \times \mathbb{R}_{+}$ such that
\begin{align}\label{er}
    \widehat{\textbf U^w}^{\top} \odot (y-u) \ominus_{gH} c \lVert y-u \rVert\preceq{\bf{\Phi}}_{1}(y) \ominus_{gH}{\bf{\Phi}}_{1}(u) ~\text{for all}~ y \in \mathcal Y.
\end{align}
Since $u$ is a weak efficient point  of $({\bf{\Phi}}_{2} \ominus_{gH}{\bf{\Phi}}_{1})$,
\begin{align}\label{04}
    & ({\bf{\Phi}}_{2} \ominus_{gH}{\bf{\Phi}}_{1})(u)\preceq({\bf{\Phi}}_{2} \ominus_{gH}{\bf{\Phi}}_{1})( y) ~\forall~ y \in \mathcal Y.
\end{align}
\begin{enumerate}[$\bullet$ \textbf{Case} 1.]
    \item If $w({\bf{\Phi}}_{1}(u)) \geq w({\bf{\Phi}}_{2}(u))$, then from the inequality (\ref{04}), for all $y \in \mathcal Y$, we have
    \begin{eqnarray}
     &&  [\overline {\phi}_{2}(u)-\overline {\phi}_{1}(u),\underline {\phi}_{2}(u) -\underline {\phi}_{1}(u)] \preceq[\overline {\phi}_{2}(y)-\overline{\phi}_{1}(y), \underline {\phi}_{2}(y)-\underline {\phi}_{1}(y)]\nonumber \\
     &\implies& \overline{\phi}_{1}(y)-\overline {\phi}_{1}(u) \le\overline{\phi}_{2}(y) -\overline{\phi}_{2}(u),~ \& ~\underline{\phi}_{1}(u)-\underline {\phi}_{1}(u) \le \underline {\phi}_{2}(y)-\underline {\phi}_{2}(u)\label{5_6}
     \end{eqnarray}
Now there arises two subcases.
\begin{enumerate}[$\bullet$ \textbf{Subcase} 1.]
\item If $\underline{\phi}_{1}(y)-\underline {\phi}_{1}(u) \le  \overline {\phi}_{1}(y)-\overline{\phi}_{1}(u) ,\\
\underline{\phi}_{1}(y)-\underline{\phi}_{1}(u)  \le \min \{ \underline{\phi}_{2}(y)-\underline{\phi}_{2}(u),\overline{\phi}_{2}(y) -\overline{\phi}_{2}(u)\}$ and  \\$\overline {\phi}_{1}(y)-\overline{\phi}_{1}(u) \le \max\{\underline{\phi}_{2}(y)-\underline {\phi}_{2}(u),\overline{\phi}_{2}(y) -\overline {\phi}_{2}(u)\}.$
Clearly we have
$[ \underline {\phi}_{1}(y)-\underline{\phi}_{1}(u) , \overline{\phi}_{1}(y)-\overline{\phi}_{1}(u)] \preceq ~[\min \{\underline {\phi}_{2}(y)-\underline{\phi}_{2}(u),\overline{\phi}_{2}(y) -\overline{\phi}_{2}(u)\},\\
~~~~~~~~~~~~~~~~~~~~~~~~~~~~~~~~~~~~~~~~~  \max \{\underline{\phi}_{2}(y)-\underline{\phi}_{2}(u),\overline{\phi}_{2}(y) -\overline {\phi}_{2}(u)\}].
 $
\item If $\overline{\phi}_{1}(y)-\overline {\phi}_{1}(u)  \le  \underline {\phi}_{1}(y)-\underline{\phi}_{1}(u) ,\\
\overline{\phi}_{1}(y)-\overline{\phi}_{1}(u)  \le \min \{ \underline{\phi}_{2}(y)-\underline{\phi}_{2}(u),\overline{\phi}_{2}(y) -\overline{\phi}_{2}(u)\}$ and
\\$\underline{\phi}_{1}(y)-\underline{\phi}_{1}(u) \le \max\{\underline{\phi}_{2}(y)-\underline{\phi}_{2}(u),\overline{\phi}_{2}(y) -\overline{\phi}_{2}(u)\}.$
Clearly we have
$ [ \overline {\phi}_{1}(y)-\overline {\phi}_{1}(u) , \underline {\phi}_{1}(y)-\underline{\phi}_{1}(u)]
 \preceq [\min \{\underline{\phi}_{2}(y)-\underline{\phi}_{2}(u),\overline {\phi}_{2}(y) -\overline {\phi}_{2}(u)\}, \\ ~~~~~~~~~~~~~~~~~~~~~~~~~~~~~~~~~~~~~~~~ \max \{\underline {\phi}_{2}(y)-\underline {\phi}_{2}(u),\overline {\phi}_{2}(y) -\overline {\phi}_{2}(u)\}]
.$
\end{enumerate}
  Combining $\textbf{Subcase}$ 1 and $\textbf{Subcase}$ 2, we have
\begin{align}\label{ghc}
{\bf{\Phi}}_{1}(y) \ominus_{gH}{\bf{\Phi}}_{1}(u) \preceq{\bf{\Phi}}_{2}(y) \ominus_{gH} {\bf{\Phi}}_{2}(u).
 \end{align}
     \item If $w({\bf{\Phi}}_{2}(u)) \geq w({\bf{\Phi}}_{1}(u))$, then from the inequality (\ref{04}), for all $y \in \mathcal Y$, we have
      \begin{eqnarray}
     &&  [\underline {\phi}_{2}(u)-\underline {\phi}_{1}(u),\overline {\phi}_{2}(u) -\overline {\phi}_{1}(u)]\preceq[\underline {\phi}_{2}(y)-\underline {\phi}_{1}(y), \overline {\phi}_{2}(y)-\overline {\phi}_{1}(y)]  \nonumber \\
    &\implies& \underline {\phi}_{1}(y)-\underline {\phi}_{1}(u) \le \underline {\phi}_{2}(y) -\underline {\phi}_{2}(u)~ \& ~\overline {\phi}_{1}(y)-\overline {\phi}_{1}(u) \le  \overline {\phi}_{2}(y)-\overline {\phi}_{2}(u). \label{3_2}
    \end{eqnarray}
By a similar manner as in $\textbf{Case}~1$, we have
  \begin{align*}
    {\bf{\Phi}}_{1}(y) \ominus_{gH}{\bf{\Phi}}_{1}(u)\preceq{\bf{\Phi}}_{2}(y) \ominus_{gH}{\bf{\Phi}}_{2}(u).
 \end{align*}
\end{enumerate}
Hence, in all cases, we have
 \begin{align}\label{bb}
   {\bf{\Phi}}_{1}(y) \ominus_{gH}{\bf{\Phi}}_{1}(u) \preceq{\bf{\Phi}}_{2}(y) \ominus_{gH}{\bf{\Phi}}_{2}(u).
 \end{align}
In view of (\ref{er}) and from (\ref{bb}), we get
\begin{align}
     \widehat{\textbf U^w}^{\top} \odot (y-u) \ominus_{gH} c \lVert y-u \rVert\preceq{\bf{\Phi}}_{2}(y) \ominus_{gH}{\bf{\Phi}}_{2}(u) ~\text{for all}~ y \in \mathcal Y, ~\text{by Lemma 2.3~(ii)~of ~\cite{anshika}}\nonumber.
    \end{align}
    which implies $(\widehat{\textbf U^w},c) \in \partial^{w}{\bf{\Phi}}_{2}(u)$. Hence, the result follows.
\end{proof}

\begin{note}
If we had taken an efficient solution of ${\bf{\Phi}}_{2} \ominus_{gH} {\bf{\Phi}}_{1}$ instead of a weak efficient solution, the additional condition
  $w({\bf{\Phi}}_{1}( y)) \geq w({\bf{\Phi}}_{2}( y))$ or $w({\bf {\Phi}}_{1}( y)) \leq w({\bf{\Phi}}_{2}( y))$ becomes essential for  Theorem \ref{plk} to hold. For instance, consider the IVFs ${\bf{\Phi}}_{1} : [-1,1] \rightarrow I(\mathbb{R})$ and ${\bf{\Phi}}_{2} : [-1,1] \rightarrow I(\mathbb{R})$ which are defined by
\[{\bf{\Phi}}_{1}(y)= \begin{cases}
 [y^3,y] & \text{ if } 0 \leq y \leq 1\\
 [4y,y] & \text{ if } -1\leq y < 0 \\
\end{cases}
~~\text{and}~~
{\bf{\Phi}}_{2}(y)=\begin{cases}
 [y^3,5y] & \text{ if } 0 \leq y \leq 1  \\
 [3y, 2y] & \text{ if } -1\leq y < 0,  \\
\end{cases}\] respectively. Now, according to Theorem \ref{plk},
\[({\bf{\Phi}}_{2} \ominus_{gH}{\bf{\Phi}}_{1})(y)= \begin{cases}
[0,4y] &  \text{ if } 0 \leq y \leq 1\\
[y, -y] &  \text{ if } -1 \leq y < 0\\
\end{cases}\] gets efficient solution at $0$ because $({\bf{\Phi}}_{2} \ominus_{gH}{\bf{\Phi}}_{1})(0) \preceq ({\bf{\Phi}}_{2} \ominus_{gH}{\bf{\Phi}}_{1})(y)$ for all $ y \in$ $[0,1]$ and $({\bf{\Phi}}_{2}\ominus_{gH}{\bf{\Phi}}_{1})(0)$ is not comparable with the values $({\bf{\Phi}}_{2} \ominus_{gH}{\bf{\Phi}}_{1})(y)$ for all $y \in [-1,0]$. It is not difficult to check that
\begin{align*}
 &\partial^{w}{\bf{\Phi}}_{1}(0)=\{({\textbf K^{w}_{1}}, c_{1}): [1,4] \preceq({\textbf K^{w}_{1}}\oplus c_{1} ), {\textbf K^{w}_{1}}\ominus_{gH} c_{1} \preceq [0,1]\}\\
\text{and}~~ &\partial^{w}{\bf {\Phi}}_{2}(0)=\{({\textbf K^{w}_{2}}, c_{2}): [2,3] \preceq {\textbf K^{w}_{2}}\oplus c_{2}, {\textbf K^{w}_{2}}\ominus_{gH} c_{2}  \preceq [0,5]\}.
\end{align*}
Here, we see that $\partial^{w}{\bf{\Phi}}_{1}(0)$ and $\partial^{w}{\bf{\Phi}}_{2}(0)$ are not comparable and at same time, we  notice that $w({\bf{\Phi}}_{2}(y)) \geq w({\bf{\Phi}}_{1}(y))$ on $[0,1]$ and $w({\bf{\Phi}}_{1}(y)) \geq w({\bf{\Phi}}_{2}(y))$ on $[-1,0]$.
\end{note}
\begin{remark}
In Theorem \ref{plk}, the inclusion (\ref{dini2}) is necessary but not sufficient condition for weak efficient point of ${\bf {\Phi}}_{2}\ominus_{gH}{\bf{\Phi}}_{1}$. For instance, consider the IVFs ${\bf{\Phi}}_{1}: [-1,1] \rightarrow I(\mathbb{R})$ and~${\bf{\Phi}}_{2}: [-1,1] \rightarrow I(\mathbb{R})$ that are defined by  \[{\bf{\Phi}}_{1}(y)=\begin{cases}
[y^3, y]  & \text{if } 0 \leq y \leq 1 \\
[3y, 1.5 y]  & \text{if } -1 \leq y < 0 \\
\end{cases}
~~\text{and}~~
{\bf{\Phi}}_{2}(y)=\begin{cases}
 [y^3+y^2, 2y^2+y] & \text{if } 0 \leq y \leq 1\\
 [3y,2y] &\text{if } -1 \leq y < 0. \\
\end{cases} \]
We notice that $w({\bf{\Phi}}_{2}(y)) \geq w({\bf{\Phi}}_{1}(y))$ on $[0,1]$ and $w({\bf{\Phi}}_{2}(y)) \leq w(\boldsymbol {\Phi}_{1}(y))$ on $[-1,0]$. Note that
\begin{align*}
&\partial^{w}{\bf{\Phi}}_{1}(0)= \{({\textbf K^{w}_{1}}, c_{1}): [1.5,3] \preceq {\textbf K^{w}_{1}}\oplus c_{1}, {\textbf K^{w}_{1}}\ominus_{gH} c_{1}\preceq [0,1]\} \\
\text{ and } &\partial^{w}{\bf{\Phi}}_{2}(0) = \{({\textbf K^{w}_{2}},c_{2}): [2, 3] \preceq {\textbf K^{w}_{2}}\oplus c_{2}, {\textbf  K^{w}_{2}}\ominus_{gH} c_{2}\preceq [0,1] \}.
\end{align*}
Hence, $\partial^{w}{\bf{\Phi}}_{1}(0) \subset \partial^{w}{\bf{\Phi}}_{2}(0)$ but $0$ is not a weak efficient point of ${\bf{\Phi}}_{2}\ominus_{gH} {\bf{\Phi}}_{1}$ on $[-1,1]$.
\end{remark}

Next, we study a relation between augmented normal cone and $gH$-weak subdifferential. So, let us define the  augmented normal cone  to $\mathcal Y$  as below.
\begin{definition}(\emph{Augmented normal cone}). An \emph{ augmented normal cone} to $\mathcal Y$ at $u$ is
\begin{align*}
    \mathcal N^{c}_{\mathcal Y}(u)= \left\{ (\widehat{\textbf G}, c) \in I(\mathbb R)^n \times \mathbb R_{+}: \widehat{\textbf G}^\top \odot (y-u) \ominus_{gH} c\lVert y-u \rVert \preceq \textbf 0 ~\forall~y \in \mathcal Y\right\}.
\end{align*}
\end{definition}

\begin{theorem}\emph{(Optimality condition via augmented normal cone)}. \label{ybnm}
    An IVF ${\bf{\Psi}} : \mathcal{Y} \to I(\mathbb{R})$ attains weak efficient solution at $u$ if and only if $(\textbf 0,0) \in \partial^{w}{{\bf{\Psi}}}(u) \oplus \mathcal N^{c}_{\mathcal Y}(u)$, where  $(\textbf 0,0)$ denotes the zero of $I(\mathbb{R}) \times \mathbb{R}_{+}$.
\end{theorem}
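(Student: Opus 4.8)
The plan is to exploit two elementary facts: that the pair $(\textbf 0,0)$ always lies in both $\partial^{w}{\bf{\Psi}}(u)$ (precisely when $u$ is weak efficient) and in $\mathcal N^{c}_{\mathcal Y}(u)$ (unconditionally), and that the interval identity $\widehat{\textbf G^w}\oplus\widehat{\textbf H}=\textbf 0$ is very rigid. The starting observation is that $(\textbf 0,0)\in\partial^{w}{\bf{\Psi}}(u)$ unfolds, via Definition \ref{etuhj}, into $\textbf 0^{\top}\odot(y-u)\ominus_{gH}0\cdot\lVert y-u\rVert\preceq{\bf{\Psi}}(y)\ominus_{gH}{\bf{\Psi}}(u)$, i.e. $\textbf 0\preceq{\bf{\Psi}}(y)\ominus_{gH}{\bf{\Psi}}(u)$ for all $y$, which by the equivalence between dominance and the sign of the $gH$-difference (Lemma 2.1 of \cite{Ghosh2019derivative}) is exactly weak efficiency of ${\bf{\Psi}}$ at $u$.

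For the forward implication I would assume $u$ is weak efficient, so $(\textbf 0,0)\in\partial^{w}{\bf{\Psi}}(u)$ by the observation above. Since the defining inequality of $\mathcal N^{c}_{\mathcal Y}(u)$ collapses to $\textbf 0\preceq\textbf 0$ for the pair $(\textbf 0,0)$, we also have $(\textbf 0,0)\in\mathcal N^{c}_{\mathcal Y}(u)$. Writing $(\textbf 0,0)=(\textbf 0,0)\oplus(\textbf 0,0)$ then places $(\textbf 0,0)$ in $\partial^{w}{\bf{\Psi}}(u)\oplus\mathcal N^{c}_{\mathcal Y}(u)$, which settles this direction.

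For the converse I would take a decomposition $(\textbf 0,0)=(\widehat{\textbf G^w},c_1)\oplus(\widehat{\textbf H},c_2)$ with $(\widehat{\textbf G^w},c_1)\in\partial^{w}{\bf{\Psi}}(u)$ and $(\widehat{\textbf H},c_2)\in\mathcal N^{c}_{\mathcal Y}(u)$. The scalar component $c_1+c_2=0$ together with $c_1,c_2\geq0$ forces $c_1=c_2=0$, and the componentwise identity $\textbf G^w_i\oplus\textbf H_i=\textbf 0$ forces each $\textbf G^w_i$ and $\textbf H_i$ to be degenerate point intervals with $\textbf H_i=-\textbf G^w_i$ (for intervals, $\textbf X\oplus\textbf Y=\textbf 0$ already requires $\underline x=\overline x$). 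Hence $\widehat{\textbf G^w}$ is a genuine real vector $g$ and $\widehat{\textbf H}=-g$, so the two defining inequalities reduce, once the $c$-terms vanish, to $[g^{\top}(y-u),g^{\top}(y-u)]\preceq{\bf{\Psi}}(y)\ominus_{gH}{\bf{\Psi}}(u)$ and $[-g^{\top}(y-u),-g^{\top}(y-u)]\preceq\textbf 0$. The latter gives $g^{\top}(y-u)\geq0$, whence $\textbf 0\preceq[g^{\top}(y-u),g^{\top}(y-u)]$; chaining with the former by transitivity of $\preceq$ yields $\textbf 0\preceq{\bf{\Psi}}(y)\ominus_{gH}{\bf{\Psi}}(u)$ for every $y$, i.e. weak efficiency.

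The only delicate point, and the step I expect to be the crux, is the rigidity of $\widehat{\textbf G^w}\oplus\widehat{\textbf H}=\textbf 0$: it is this degeneration of both interval vectors to mutually opposite real vectors, together with the forced vanishing of $c_1$ and $c_2$, that converts two a priori independent interval inequalities into a single scalar sign condition on $g^{\top}(y-u)$, which is what makes the chaining argument close. Everything else is routine manipulation of the dominance relation and its interplay with $\ominus_{gH}$.
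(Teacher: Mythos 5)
Your proof is correct, and it takes a genuinely different route from the paper's. The paper argues via an indicator function: it defines $\delta_{\mathcal Y}(y)=\textbf 0$ on $\mathcal Y$ (and $\infty$ outside), identifies $\mathcal N^{c}_{\mathcal Y}(u)$ with $\partial^{w}\delta_{\mathcal Y}(u)$, notes $\partial^{w}{\bf{\Psi}}(u)=\partial^{w}({\bf{\Psi}}\oplus\delta_{\mathcal Y})(u)$, and shuttles between $\partial^{w}({\bf{\Psi}}\oplus\delta_{\mathcal Y})(u)$ and $\partial^{w}{\bf{\Psi}}(u)\oplus\mathcal N^{c}_{\mathcal Y}(u)$ --- a Fermat-rule argument in the style of classical convex analysis. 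Crucially, the paper's converse direction rests on the asserted \emph{equality} $\partial^{w}{\bf{\Psi}}(u)\oplus\mathcal N^{c}_{\mathcal Y}(u)=\partial^{w}({\bf{\Psi}}\oplus\delta_{\mathcal Y})(u)$, of which only the inclusion $\subset$ is actually proved there; the reverse inclusion is a sum-rule-type statement $\partial^{w}{\bf{\Phi}}_{1}(u)\oplus\partial^{w}{\bf{\Phi}}_{2}(u)\subset\partial^{w}({\bf{\Phi}}_{1}\oplus{\bf{\Phi}}_{2})(u)$, precisely the kind of identity the paper's own counterexample (preceding Theorem \ref{onesidesumrule}) shows can fail for IVFs. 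Your argument sidesteps this entirely: you take an arbitrary decomposition $(\textbf 0,0)=(\widehat{\textbf G^{w}},c_{1})\oplus(\widehat{\textbf H},c_{2})$ and exploit the rigidity of interval addition --- $c_{1}=c_{2}=0$ by nonnegativity, and $\textbf X\oplus\textbf Y=\textbf 0$ forces both intervals to degenerate to opposite real numbers since widths add and are nonnegative --- reducing the two membership inequalities to $[g^{\top}(y-u),g^{\top}(y-u)]\preceq{\bf{\Psi}}(y)\ominus_{gH}{\bf{\Psi}}(u)$ and $g^{\top}(y-u)\geq 0$, which chain by transitivity of $\preceq$ to give $\textbf 0\preceq{\bf{\Psi}}(y)\ominus_{gH}{\bf{\Psi}}(u)$, i.e.\ weak efficiency. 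Your forward direction is also leaner: since $(\textbf 0,0)$ lies in $\mathcal N^{c}_{\mathcal Y}(u)$ unconditionally and lies in $\partial^{w}{\bf{\Psi}}(u)$ exactly when $u$ is weak efficient, no indicator function is needed. In short, the paper's route is more conceptual and would generalize to Fermat-type rules if a sum rule were available, while your route is more elementary and, in the converse direction, actually closes a gap that the paper's own proof leaves open.
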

\begin{proof}
Since  $u$ is a weak efficient point of ${\bf{\Psi}}$ on $\mathcal Y$,
\begin{align*}
   &  {\bf{\Psi}}(u)\preceq{\bf {\Psi}}(y) ~\forall ~  y \in \mathcal Y\nonumber\\
   \implies & \textbf 0\preceq{{\bf{\Psi}}}(y) \ominus_{gH}{\bf{\Psi}}( u)~\forall ~  y \in \mathcal Y ~~\text{by Lemma 2.1(ii)~ of~\cite{Ghosh2019derivative}}  \nonumber\\
    \implies &  (\textbf 0, 0) \in \partial^{w} {\bf{\Psi}}(u).
\end{align*}
Let $\delta_{\mathcal Y}: \mathcal Y \to I(\mathbb R)$ be an indicator function, defined by
$\delta_{\mathcal Y}(y)= \begin{cases}
\textbf 0, & \text{for} ~y \in \mathcal Y\\
\infty, & \text{for }~ y \notin \mathcal{Y}
\end{cases}$.
Since \[({\bf{\Psi}} \oplus \delta_{\mathcal Y})(y)=
\begin{cases}
{{\bf{\Psi}}}(y) & \text{ if }~ y \in \mathcal{Y} \\
 \infty &\text{ if }~ y \notin \mathcal{Y}, \\
\end{cases}\]
$(\textbf 0,0  )\in \partial^{w} {\bf{\Psi}}(u)= \partial^{w}({\bf {\Psi}} \oplus \delta_{\mathcal{Y}})(u)$.
It needs to show that $\partial^{w}({\bf{\Psi}} \oplus \delta_{\mathcal{Y}})(u) \subset \partial^{w} {\bf{\Psi}}(u) \oplus \mathcal{N}^{c}_{\mathcal Y}(u)$. To prove this,
let $\widehat{\textbf G^w} \in \partial^{w}({\bf{\Psi}}_{1}\oplus \delta_{\mathcal Y})(u) $.
Then,
\begin{align*}
    & \widehat{\textbf G^w}^{\top} \odot (y-u) \ominus_{gH} c\lVert y-u\rVert\preceq ({\bf{\Psi}}\oplus \delta_{\mathcal Y})(y) \ominus_{gH} ({\bf{\Psi}} \oplus \delta_{\mathcal Y})(u)  \nonumber\\
   \implies &\widehat{\textbf G^w}^{\top}\odot (y-u) \ominus_{gH} c \lVert y-u \rVert \preceq ({{\bf{\Psi}}}(y) \oplus \delta_{\mathcal Y}(y))\ominus_{gH} ({\bf{\Psi}}(u) \oplus \delta_{\mathcal Y}(u))   \nonumber\\
\implies &   \widehat{\textbf G^w}^{\top} \odot (y-u) \ominus_{gH} c \lVert y-u \rVert\preceq {{\bf{\Psi}}}(y) \ominus_{gH}  {\bf{\Psi}}(u) ,
\end{align*}
which implies  $\widehat{\textbf G^w} \in \partial^{w} {\bf{\Psi}}(u) \subset \partial^{w} {\bf{\Psi}}(u) \oplus \partial^{w} \delta_{\mathcal Y}(u)$, where \{($\textbf 0,0 )\} \subset \partial^{w}\delta_{\mathcal Y}(u)$.
Hence, $\widehat{\textbf G^w} \in \partial^{w}{\bf{\Psi}}(u) \oplus \partial^{w}\delta_{\mathcal Y}(u)=\partial^{w} {\bf{\Psi}}(u) \oplus \mathcal{N}^{c}_{\mathcal Y}(u)$.\\
To show the converse part, let $(\textbf 0, 0) \in \partial^{w}{\bf{\Psi}}(u) \oplus \mathcal{N}^{c}_{\mathcal Y}(u)=   \partial^{w} ({\bf{\Psi}}(u) \oplus  \delta_{\mathcal Y}(u))$.
Now, for any  $ y \in \mathcal Y$,\begin{align*}
    & \textbf 0 \odot (y-u) \ominus_{gH} 0 \lVert y-u \rVert\preceq ({\bf{\Psi}}( y) \oplus \delta_{\mathcal Y}(y)) \ominus_{gH} ({\bf{\Psi}}(u) \oplus  \delta_{\mathcal Y}(u))   \\
\text{or},~    & \textbf 0\preceq {{\bf{\Psi}}}(y) \ominus_{gH} {\bf{\Psi}}(u) \nonumber\\
\text{ or},~ & {\bf{\Psi}}(u) \preceq {{\bf{\Psi}}}(y) ~\text{by Lemma 2.1(ii)~of~\cite{Ghosh2019derivative}}.
\end{align*}
So, $u$ is a weak efficient solution of ${\bf{\Psi}}$.
\end{proof}

\section{Conclusion}
\label{section6}
In this paper, the concepts of $gH$-weak subdifferentials and $gH$-weak  subgradients (Definition \ref{etuhj}) for IVFs with illustrative examples have been provided. The $gH$-weak subdifferential set of an IVF has been found to be convex (Theorem \ref{opi}) and closed (Theorem \ref{opw}). We have further introduced a necessary and sufficient condition (Theorem \ref{hgfd}) for the set of $gH$-weak subgradients to be nonempty. We have derived the necessary optimality condition (Theorem \ref{dsa}) involving $gH$- Fr\'echet differential and $gH$-weak subdifferential for IVFs. We have derived a necessary optimality criterion for difference of two IVFs (Theorem \ref{tyr} and Theorem \ref{ybnm}). Towards the end of the paper, we have provided a necessary and sufficient condition for weak efficient solution in terms of two notions of augmented normal cone and $gH$-weak subdifferential.  \\

Continuing the present study,  in the forthcoming work we will attempt to solve  the following three problems.
\begin{enumerate}[$\bullet$]
\item Introducing a $gH$-weak subgradient algorithm which characterizes efficient  solutions for nonsmooth nonconvex interval optimization problems.

\item In future, we will take up the practical optimization problems to be solved by $gH$-weak subgradient algorithm.

\item Analogous to the notion of weak-stability for conventional optimization problems \cite{gasimov}, in future, one may attempt to extend the notion for the following IOP (P):
\begin{align*}
               \min &~~{{\bf{\Phi}}}(y)\\
                \text{subject to}&~~ g_{j}(y) \leq 0, ~j=1,2,\ldots,p\\
                &~~  y \in \mathcal{Y},
\end{align*}
where ${\bf{\Phi}} : \mathcal{Y} \rightarrow I(\mathbb{R})\cup \{-\infty,+\infty\}$ is an IVF and $g_{j}: \mathcal{Y} \rightarrow \mathbb{R}$ is a real-valued constraint,  $j=1,2,\ldots,p$, and the feasible set $C$ is
\[   C = \{y \in \mathbb{R}^n: y \in \mathcal Y,  ~g_{j}(y) \leq 0, j=1,2,\ldots, p  \}.
\]
To establish an interrelation between strong duality and weak-stability for (P), one may define the augmented Lagrange interval-valued function for (P) as follows. Let $J$ be an arbitrary index set, for which define
\begin{align*}
    &\mathbb R^{(J)}_{\lambda}:= \{ e \in \mathbb R^{(J)}: \lvert e_{j} \rvert \leq 1 , j \in J(\lambda)\}\\
~\text{and}~& \Lambda: = \left\{(\lambda, k) \in \mathbb R^{(J)}: \exists ~e
 \in  \mathbb R^{(J)}_{\lambda}, ke -\lambda \in \mathbb R^{(J)}_{+} \right\},
\end{align*}
where
\begin{align*}
 &   \mathbb R^{(J)}: = \{\lambda= (\lambda_{j})_{j \in J}: \lambda_{j}=0 ~\text{for all}~j \in J ~\text{but only finitely many}~\lambda_{j} \neq 0\},\\
 & J(\lambda): = \{j \in J : \lambda_{j}\neq 0 \}, ~\text{is a finite subset of}~ J~\\
 ~\text{and}~&~~ \mathbb R^{J}_{+}: =\{\lambda = (\lambda_{j})_{j \in J}\in \mathbb R^{(J)}: \lambda_{j} \ge 0, j \in J \}.
\end{align*}
For each $j \in J$, the augmented Lagrange interval-valued function for (P) can be  defined by
\begin{align*}
    \textbf L(y, \Lambda, k) = \textbf T(y) \ominus_{gH} \langle\lambda, (g_{j}(y))_{j} \rangle \oplus \boldsymbol{\beta} ((g_{j}(y)), \lambda,k ),
\end{align*}
where $\boldsymbol{\beta} (u, \lambda, k ) : \mathbb R^J \times \mathbb R^{(J)}\times \mathbb R_{+} \to \mathbb R$ is such that
\[\boldsymbol{\beta} (y, \lambda , k) = \begin{cases}
   \sup_{e \in \mathbb R^{(J)}_{\lambda}} \left\{\langle ke, u\rangle : ke -\lambda \in \mathbb R^{(J)}_{+}  \right\}~ & \text{if}~J(\lambda)\neq \emptyset,\\
    0 & \text{if}~J(\lambda)= \emptyset.
    \end{cases}
\]
 The dual of (P) can be found as
 \begin{align*}
     \max &~~~\inf~ \textbf L(x, \lambda, k)\\
     \text{subject to} &~~~ (\lambda, k) \in \Lambda.
\end{align*}
We will make an effort to reduce the duality gap by weak-stability property of the following perturbation function  $ {\bf{\Psi}}: \mathcal{Y} \times \mathbb{R}^n \to
 I(\mathbb{R}) \cup  \{+ \infty\}$ associated  to the IOP (P):
 \begin{align*}
 &  {\bf{\Psi}}(y, u)= \begin{cases}
                 {{\bf{\Phi}}}(y)~ & \text{if}~ y \in  \mathcal{Y} \subset \mathbb{R}^n~\text{and}~ ~g_{j}(y) \leq u_{j},~ \forall~ j =1,2,3, \ldots,p\\
                 + \infty~ & \text{otherwise},
               \end{cases}
\end{align*}
where $u =(u_1,u_2, \ldots, u_n)$ is called the perturbation vector. \\

\item One may also try to apply of the $gH$-weak subdifferential in the context of zero duality gap in IOPs and interval-valued differential equations. The  method for eliminating duality gap will be immediately applicable in the following areas:
\begin{itemize}
    \item two person zero-sum game \cite{Parpas}
    \item optimal solutions of control problems with first order differential equations   \cite{Picken}
    \item Hamilton-Jacobi field theory  \cite{Picken}
    \item difference of convex programming \cite{Gaop}.\\
\end{itemize}

\item The newly defined augmented normal cone and $gH$-weak subdifferential together lead to the thought of introducing supporting cones for set of intervals in the future. This new concept may be used later to describe conic gap, which may be a crucial property to  capturing the geometry of nonconvex set of intervals.
\end{enumerate}

\noindent

\appendix

\section{Proof of Lemma \ref{dr2}} \label{appendix_ind}
\begin{proof}
Let $\textbf{W}=[\underline w, \overline w],~ \textbf{Y}=[\underline y, \overline y]$ and $ ~\textbf{Z}=[\underline z, \overline z].$
 From the $gH$-difference, we have the following four possible cases:
 \begin{enumerate}[$\bullet$ \textbf{Case} 1.]
 \item Given that $ \epsilon \preceq (\textbf{W} \ominus_{gH} \textbf{Y}) \ominus_{gH} \textbf{Z}= [\underline w -\underline y-\underline z, \overline w-\overline y-\overline z]$ \label{case 11} .
 Since $ \underline w-\underline y  \geq \underline z + \epsilon $ and $\overline w -\overline y \geq \overline z + \epsilon$, we have $ \underline z + \epsilon \le \overline z + \epsilon \le \overline w-\overline y$. This implies $\underline z+ \epsilon \le \min \{\underline w-\underline y, \overline w-\overline y \}$. Also, $\overline z + \epsilon \le \overline w -\overline y \le \max \{ \underline w-\underline y, \overline w -\overline y\}.$ Clearly we have $[\underline z + \epsilon, \overline z + \epsilon] \preceq [\min\{ \underline w-\underline y, \overline w-\overline y  \}, \max \{\underline w-\underline y, \overline w-\overline y \} ]$ and hence $\textbf{Z}\oplus\epsilon\preceq\textbf{W} \ominus_{gH} \textbf{Y} $.
 \item $(\textbf{W} \ominus_{gH} \textbf{Y}) \ominus_{gH} \textbf{Z}=[\overline w-\overline y-\overline z, \underline w-\underline y -\underline z]$. Thus, the proof is straightforward and identical to \textbf{Case} \ref{case 11}.
 \item \label{cvb}$(\textbf{W} \ominus_{gH} \textbf{Y}) \ominus_{gH} \textbf{Z}=[\overline w-\overline y-\underline z, \underline w-\underline y-\overline z]$. Since $\overline w -\overline y \geq \underline z + \epsilon,  \underline w -\underline y \geq \overline z + \epsilon$, we have  $ \underline z + \epsilon \le \overline z + \epsilon \le \underline w-\underline y$. This implies $\underline z+ \epsilon \le \min \{\underline w-\underline y, \overline w-\overline y \}$. Also, $\overline z + \epsilon \le \underline w -\underline y \le \max \{ \underline w-\underline y, \overline w -\overline y\}.$
 Clearly we have $   [\underline z + \epsilon, \overline z +\epsilon]   \preceq [\min\{ \underline w-\underline y, \overline w-\overline y  \}, \max \{\underline w-\underline y, \overline w-\overline y \} ] $ and hence $\textbf{Z}\oplus  \epsilon \preceq\textbf{W} \ominus_{gH} \textbf{Y} $.
 \item$(\textbf{W} \ominus_{gH} \textbf{Y}) \ominus_{gH} \textbf{Z}=[\underline w-\underline y-\overline z, \overline w-\overline y-\underline z]$. Thus, the proof is identical to \textbf{Case} \ref{cvb}.

 \end{enumerate}
\end{proof}	

\section{Proof of Lemma \ref{gh2}} \label{appendix_inf}
\begin{proof}
Let $\textbf X=[\underline x, \overline x], \textbf Y= [\underline y, \overline y]$, $ \textbf Z= [\underline z, \overline z]$~
and ~$ \textbf W=[\underline w, \overline w]$. Then,
\begin{align}
&(\textbf X \oplus \textbf Y) \ominus_{gH} (\textbf Z \oplus \textbf W) \nonumber\\
= ~& [\min \{\underline x+\underline y-\underline z-\underline w, \overline x+\overline y-\overline z-\overline w\}, \max \{\underline x+\underline y-\underline z-\underline w, \overline x+\overline y-\overline z-\overline w\}] \nonumber\\
=~& [\min\{\underline x - \underline z + \underline y - \underline w, \overline x- \overline z + \overline y - \overline w\}, \max\{  \underline x - \underline z + \underline y - \underline w, \overline x- \overline z + \overline y - \overline w    \}].\label{tuncjvhj_1}
\end{align}
We have
\begin{align}
  & \min \{ \underline x - \underline z + \underline y - \underline w , \overline x - \overline z + \overline y - \overline w \} \ge \min \{ \underline x - \underline z, \overline x - \overline z\} + \min \{ \underline y - \underline w, \overline y - \overline w\} \label{ttfftu_1} \\
 ~\text{and}~ & \max \{ \underline x - \underline z + \underline y - \underline w , \overline x - \overline z + \overline y - \overline w \} \le \max \{ \underline x - \underline z, \overline x - \overline z \} + \max \{\underline y - \underline w, \overline y - \overline w \} \label{ttfftu_2}.
\end{align}
 By  (\ref{ttfftu_1}) and (\ref{ttfftu_2}),  from (\ref{tuncjvhj_1}), we write
 \begin{align*}
& (\textbf X \oplus \textbf Y) \ominus_{gH} (\textbf Z \oplus \textbf W) \nonumber\\
 = ~& [ \min\{\underline x - \underline z + \underline y - \underline w, \overline x- \overline z + \overline y - \overline w\}, \max\{  \underline x - \underline z + \underline y - \underline w, \overline x- \overline z + \overline y - \overline w    \}] \\
 \subseteq ~ & [\min \{\underline x- \underline z, \overline x -\overline z \}+ \min  \{ \underline y - \underline w, \overline y - \overline w\}, \max \{\underline x- \underline z, \overline x -\overline z \} + \max \{\underline y - \underline w, \overline y - \overline w \}]   \\
 = ~& [\min \{\underline x- \underline z, \overline x -\overline z \},\max \{\underline x- \underline z, \overline x -\overline z \} ]+ [\min  \{ \underline y - \underline w, \overline y - \overline w\},  \max \{\underline y - \underline w, \overline y - \overline w \} ]\\
 = ~& (\textbf X \ominus_{gH} \textbf Z) \oplus (\textbf Y \ominus_{gH} \textbf W).
 \end{align*}
\end{proof}

\section{Proof of Lemma \ref{ldr1}} \label{appendix_inq}
\begin{proof}
Let $\textbf{W}=[\underline w, \overline w],  \textbf{Y}= [\underline y, \overline y]$ and $ \textbf{Z}= [\underline z, \overline z]$. Then, $-1 \odot \textbf{W}= [-\overline w, -\underline w], -1 \odot \textbf{Y}=[-\overline y, -\underline y], -1 \odot \textbf{Z}=[-\overline z, -\underline z]$. \\
From Definition of $gH$-difference of two intervals, we have\\
either $-1 \odot \textbf{W} \ominus_{gH} -1 \odot \textbf{Y}= [\overline y -\overline w, \underline y -\underline w]$ or $-1 \odot \textbf{W} \ominus_{gH} -1 \odot \textbf{Y}=[\underline y -\underline w, \overline y- \overline w]$.\\
Then, one of the following holds true :
\begin{enumerate}[(a)]
 \item $((-1 \odot \textbf{W})\ominus_{gH} (-1 \odot\textbf Y) )\ominus_{gH} (-1 \odot \textbf{Z})=[\overline y-\overline w+\overline z, \underline y- \underline w+\underline z]$
    \item $((-1 \odot\textbf{W})\ominus_{gH} (-1\odot \textbf Y) )\ominus_{gH} (-1 \odot \textbf{Z})=[\underline y-\underline w+\underline z, \overline y-\overline w+\overline z]$
    \item $((-1 \odot \textbf{W})\ominus_{gH} (-1 \odot \textbf Y)) \ominus_{gH} (-1 \odot \textbf{Z})=[\underline y-\underline w+\overline z,\overline y-\overline w+\underline z]$
    \item $((-1 \odot \textbf{W})\ominus_{gH} (-1 \odot\textbf Y))\ominus_{gH} (-1 \odot \textbf{Z})=[\overline y-\overline w+\underline z, \underline y-\underline w+\overline z]$.
\end{enumerate}
From this, we have
\begin{enumerate}[(a)]
 \item $ \textbf 0 \ominus_{gH}\{((-1 \odot \textbf{W})\ominus_{gH} (-1 \odot\textbf Y) )\ominus_{gH} (-1 \odot \textbf{Z})\}=[\underline w-\underline y-\underline z, \overline w-\overline y-\overline z]$
    \item $ \textbf 0 \ominus_{gH}\{((-1 \odot\textbf{W})\ominus_{gH} (-1\odot \textbf Y) )\ominus_{gH} (-1 \odot \textbf{Z})\}=[\overline w-\overline y-\overline z,\underline w-\underline y-\underline z]$
    \item $ \textbf 0\ominus_{gH}\{((-1 \odot \textbf{W})\ominus_{gH} (-1 \odot \textbf Y) )\ominus_{gH} (-1 \odot \textbf{Z})\}=[\overline w-\overline y -\underline z,\underline w-\underline y-\overline z]$
    \item $ \textbf 0 \ominus_{gH}\{((-1 \odot \textbf{W})\ominus_{gH} (-1 \odot\textbf Y) )\ominus_{gH} (-1 \odot \textbf{Z})\}=[\underline w-\underline y-\overline z,\overline w-\overline y-\underline z]$.
\end{enumerate}
On the other hand, \begin{enumerate}[(a)]
 \item $(\textbf{W}\ominus_{gH} \textbf{Y}) \ominus_{gH}  \textbf{Z}=[\underline w-\underline y-\underline z, \overline w-\overline y-\overline z]$
    \item $(\textbf{W}\ominus_{gH} \textbf{Y}) \ominus_{gH}  \textbf{Z}=[\overline w-\overline y-\overline z,\underline w-\underline y-\underline z]$
    \item $(\textbf{W}\ominus_{gH} \textbf{Y}) \ominus_{gH}  \textbf{Z}=[\overline w-\overline y -\underline z,\underline w-\underline y-\overline z]$
    \item $(\textbf{W}\ominus_{gH} \textbf{Y}) \ominus_{gH}  \textbf{Z}=[\underline w-\underline y-\overline z,\overline w-\overline y-\underline z].$
\end{enumerate}
Hence, the desired result follows.
\end{proof}
\section{Proof of Lemma \ref{yure}} \label{appendix_ins}
\begin{proof}
Let $\textbf{X}=[\underline x, \overline x], \textbf{Y} =[\underline y, \overline y]$ and $ \textbf{Z}=[\underline z, \overline z]$.
\begin{enumerate}[(i)]
\item Let us consider the following four representations:
\begin{enumerate}[(a)]
    \item $(\textbf{X} \ominus_{gH}\textbf{Y} )\ominus_{gH} \textbf{Z}= [\underline x-\underline y-\underline z, \overline x-\overline y-\overline z]$,
    \item $(\textbf{X} \ominus_{gH}\textbf{Y} )\ominus_{gH} \textbf{Z}=[\underline x-\underline y-\overline z, \overline x-\overline y-\underline z]$,
    \item $(\textbf{X} \ominus_{gH}\textbf{Y} )\ominus_{gH} \textbf{Z}=[\overline x-\overline y-\underline z, \underline x-\underline y-\overline z]$,
    \item $(\textbf{X} \ominus_{gH}\textbf{Y} )\ominus_{gH} \textbf{Z}=[\overline x-\overline y-\overline z, \underline x-\underline y-\underline z]$.
\end{enumerate}
\begin{enumerate}[$\bullet$ \textbf{Case} 1.]
\item  Given that $\textbf 0 \preceq \textbf{X} \ominus_{gH} \textbf{Y} $.  
Then
we have
\begin{align}
& 0 \le \underline x-\underline y ~ \text{and} ~0 \le \overline x-  \overline y \nonumber \\
\implies & 0-\underline z  \leq \underline x -\underline y-  \underline z ~\text{and}~ 0-\overline z \leq \overline x -\overline y- \overline z \nonumber \\
\implies & [0-\underline z,0-\overline z] \preceq [\underline x-\underline y-\underline z, \overline x-\overline y-\overline z] \label{gyuufxfxg}.
\end{align}
So, from (\ref{gyuufxfxg}), we have $ \textbf 0 \ominus_{gH} \textbf{Z} \preceq (\textbf{X}\ominus_{gH}\textbf{Y}) \ominus_{gH} \textbf{Z}$  .
\item Similarly, we will arrive at this conclusion 
 (\ref{gyuufxfxg}). So, from  (\ref{gyuufxfxg}), we have $\textbf 0 \ominus_{gH} \textbf{Z} \preceq (\textbf{X}\ominus_{gH}\textbf{Y}) \ominus_{gH} \textbf{Z}$. \label{zsdf}
  \item  This case can be proved  by using same  steps as \textbf{Case 1}.
    \item  This case can be proved  by using same  steps as \textbf{Case 2}.
\end{enumerate}
\item Let $ \textbf{W}=[\underline w, \overline w]$. By the definition of $gH$-difference, there may be the following four cases.
\begin{enumerate}[(a)]
    \item $(\textbf{X} \ominus_{gH}\textbf{Y} )\ominus_{gH} \textbf{W}= [\underline x-\underline y-\underline w, \overline x-\overline y-\overline w]$
    \item$(\textbf{X} \ominus_{gH}\textbf{Y} )\ominus_{gH} \textbf{W}=[\underline x-\underline y-\overline w, \overline x-\overline y-\underline w]$
    \item $(\textbf{X} \ominus_{gH}\textbf{Y} )\ominus_{gH} \textbf{W}=[\overline x-\overline y-\underline w, \underline x-\underline y-\overline w]$
    \item $(\textbf{X} \ominus_{gH}\textbf{Y} )\ominus_{gH} \textbf{W}=[\overline x-\overline y-\overline w, \underline x-\underline y-\underline w]$.
\end{enumerate}
 The following two cases are needed to consider for the representation of these above four cases.
\begin{enumerate}[$\bullet$ \textbf{Case} 1.]
\item Since $\textbf{Z} \preceq \textbf{X} \ominus_{gH} \textbf{Y}$, we have
\begin{align}
    & \underline{z} \leq \underline x -\underline y ~\text{and}~ \overline{z} \leq \overline{x}-\overline{y} \nonumber\\
~\implies~    & \underline z -\underline w \leq  \underline x -\underline y-\underline w ~\text{and}~ \overline{z}-\overline{w} \leq \overline{x}-\overline{y}-\overline{w}
 \nonumber\\
\implies~& \text{either}~  [\underline z -\underline w,\overline{z}-\overline{w}] \preceq [\underline x -\underline y-\underline w,\overline{x}-\overline{y}-\overline{w}]\label{qwd}\\ ~\text{or}~& \label{dfg} [\overline{z}-\overline{w},\underline z -\underline w] \preceq [\overline{x}-\overline{y}-\overline{w},\underline z -\underline y-\underline w]
\end{align}
From (\ref{qwd}) and (\ref{dfg}), we have $\textbf{Z} \ominus_{gH} \textbf{W} \preceq (\textbf{X}\ominus_{gH} \textbf{Y})\ominus_{gH} \textbf{W}$.
\item Similarly, at the last step, we  have
\begin{align} \label{ghf}
\text{either}~  [\underline z -\underline w,\overline{z}-\overline{w}] \preceq [\overline x -\overline y-\underline w,\underline{x}-\underline{y}-\overline{w}]\\ ~\text{or}~ \label{sad} [\overline{z}-\overline{w},\underline z -\underline w] \preceq [\underline{x}-\underline{y}-\overline{w},\overline x -\overline y-\underline w]
\end{align}
From (\ref{ghf}) and (\ref{sad}), we have $\textbf{Z} \ominus_{gH} \textbf{W} \preceq (\textbf{X}\ominus_{gH} \textbf{Y})\ominus_{gH} \textbf{W}$.
\end{enumerate}
\item Given that $\textbf X \ominus_{gH} \textbf Y \preceq [L,L]$.
From the formula of $gH$-difference of intervals,~
\begin{align*}
 &\underline x- \underline y \leq L ~~\text{and}~~ \overline x -\overline y \leq L
  \nonumber \\
  \implies &  -L  \leq \underline y-\underline x, -L \leq \overline y -\overline x \\
 \implies &~ ~\text{either} ~~[-L,-L] \preceq [\underline y-\underline x, \overline y-\overline x ]  \\
  &\text{or}~ [-L,-L] \preceq [\overline y-\overline x, \underline y-\underline x].
  \end{align*}
  Hence, $[-L,-L] \preceq \textbf Y \ominus_{gH} \textbf X$.
\item Given that $[-\gamma,-\gamma] \preceq \textbf X \ominus_{gH} \textbf Y$. From the formula of $gH$-difference of intervals,
\begin{align*}
   &-\gamma \leq \underline x- \underline y ~~\text{and}~~ -\gamma \leq \overline x-\overline y \nonumber\\
   \implies&\underline y-\gamma \leq  \underline x ~\text{and}~ \overline y-\gamma \leq \overline x \\
  \implies & [\underline y-\gamma, \overline y-\gamma] \preceq [\underline x, \overline x]. \end{align*}
 Hence,    $ \textbf Y \ominus_{gH} [\gamma,\gamma] \preceq \textbf X .$
\item Given that
 $\textbf Z \preceq \textbf X \oplus \textbf Y$. Then,
 $$ \begin{aligned}
 & [\underline z, \overline z]\preceq[\underline x, \overline x] \oplus [\underline y,\overline y] \\
 \implies & \underline z \leq \underline x +\underline y, \overline z \leq \overline x +\overline y\\
 \implies& \underline z-\underline y \leq \underline x, \overline z-\overline y \leq \overline x\\
 \implies & [\underline z-\underline y,\overline z-\overline y ] \preceq [\underline x, \overline x].\end{aligned}$$
 Hence, $\textbf Z \ominus_{gH}\textbf{Y} \preceq \textbf X.$
\end{enumerate}
\end{proof}

\section{Proof of Lemma \ref{yuv}} \label{appendix_iou}
\begin{proof}
Let $y^{\top} \odot \widehat{\textbf C}=\textbf D$ and $\textbf D =[\underline d, \overline d]$. Note that \begin{align}\label{oiu}
    \lVert \textbf D \rVert_{I(\mathbb{R})}= \max\{\lvert \underline d\rvert ,\lvert \overline d \rvert \}.
\end{align}
On the other hand,
\begin{align}
\lVert \textbf D \rVert_{I(\mathbb{R})} =~& \lVert y_{1} \odot \textbf C_{1} \oplus y_{2} \odot \textbf C_{2} \oplus \cdots \oplus y_{n} \odot \textbf C_{n}\rVert_{I(\mathbb{R})}\nonumber\\
 \leq ~ & \lVert  y_{1} \odot \textbf C_{1} \rVert_{I(\mathbb{R})} + \lVert  y_{2} \odot \textbf C_{2}\rVert_{I(\mathbb{R})}+ \cdots+y_{n} \odot \textbf C_{n} \rVert_{I(\mathbb{R})} \nonumber\\
 = ~& \lvert y_{1}\rvert \lVert \textbf C_{1} \rVert _{I(\mathbb{R})} \oplus \lvert y_{2}\rvert \lVert \textbf C_{2} \rVert _{I(\mathbb{R})}+ \cdots
 +\lvert y_{n}\rvert \lVert \textbf C_{n} \rVert _{I(\mathbb{R})}\nonumber\\
 \leq ~ & \lVert y \rVert\sum_{i=1}^{n} \lVert \textbf C_{i} \rVert_{I(\mathbb{R})} \nonumber\\
 \label{fgh}=~& \lVert y\rVert \lVert \widehat{\textbf C} \rVert_{I(\mathbb{R})^n}.
\end{align}
Then, taking into account (\ref{oiu}) and (\ref{fgh}), we obtain
 \begin{eqnarray*}
     && \lvert \underline d \rvert \leq  \lVert y \rVert  \lvert \widehat{\textbf C}\rVert_{I(\mathbb{R})^n} ~\text{and}~ \lvert \overline d \rvert \leq \lVert y \rVert  \lVert \widehat{\textbf C}\rVert_{I(\mathbb{R})^n}\\
   &\implies & - \lVert y \rVert \lVert \widehat{\textbf C}\rVert_{I(\mathbb{R})^n} \leq \underline d ~\text{and}~ - \lVert y \rVert \lVert \widehat{\textbf C}\rVert_{I(\mathbb{R})^n} \leq \overline d \\
   &\implies & -\lVert y \rVert \lVert \widehat{\textbf C}\rVert_{I(\mathbb{R})^n} \leq \lvert \underline d \rvert ~\text{and}~ - \lVert y \rVert \lVert \widehat{\textbf C}\rVert_{I(\mathbb{R})^n} \leq \lvert \overline d \rvert \\
  &\implies&  - \lVert y \rVert \lVert \widehat{\textbf C}\rVert_{I(\mathbb{R})^n} \leq \max \{\lvert \underline d \rvert , \lvert \overline d \rvert \} \\
    &\implies&  - \lVert y \rVert \lVert \widehat{\textbf C}\rVert_{I(\mathbb{R})^n} \leq\lVert \textbf D \rVert_{I(\mathbb{R})}
 \end{eqnarray*}
 Thus, we arrived at the desired result.
\end{proof}

\noindent

\section*{Funding}
D. Ghosh: MATRICS from SERB, India, with file number  MTR/2021/000696. \\
S. Ghosh acknowledges a research fellowship from University Grant Commission, India with file number  16-9(June2019)/2019(NET/CSIR).

\section*{Conflict of interest}
The authors declare that they have no known competing financial interests or personal relationships that could
have appeared to influence the work reported in this paper.

\section*{Availability of data and materials}
Not applicable.

\section*{Code availability}

Not applicable.

\section*{Authors' contributions}

All authors contributed to the study conception and analysis. All authors read and approved the
final manuscript.




\begin{thebibliography}{99}

\bibitem{anshika} Anshika, Ghosh D, Chauhan R S and  Mesiar R, Generalized-Hukuhara subdifferential analysis and its application in nonconvex composite optimization problems with interval-valued functions, (2021) arXiv preprint, arXiv:2109.14586

\bibitem{anshika2} Anshika and Ghosh D, Interval-valued value function and its application in interval optimization problems, \emph{J. Comput. Appl. Math.} \textbf{41(4)} (2022) 1--26



\bibitem{azimov}
Azimov  A Y and Gasimov R N, On weak conjugacy, weak subdifferentials and duality with zero gap in nonconvex optimization, \emph{Int. J. Appl. Math. Comput. Sci.} \textbf{1(2)} (1999) 171--192

\bibitem{chalco2012}
Chalco-Cano Y, Lodwick  W A and Rufián-Lizana  A, Optimality conditions of type KKT for optimization problem with interval-valued objective function via generalized derivative, \emph{Fuzzy Optim. Decis. Mak.} \textbf{12(3)} (2013)  305--322




\bibitem{chalco2013} Chalco-Cano  Y, Rufián-Lizana  A, Román-Flores  H and Jiménez-Gamero  M  D, Calculus for interval-valued functions using generalized-Hukuhara derivative and applications, \emph{Fuzzy Sets Syst.} \textbf{219} (2013) 49--67

\bibitem{chalco2019}Chalco-Cano Y, Maqui-Huamán G G, Silva G N, and Jiménez-Gamero M D, Algebra of generalized-Hukuhara differentiable interval-valued functions: review and new properties, \emph{Fuzzy Sets Syst.} \textbf{375} (2019)  53--69

\bibitem{chen} Chen X and Li  Z, On optimality conditions and duality for non-differentiable interval-valued programming problems with the generalized $(F, \rho)$-convexity, \emph{J. Ind. Manag.} \textbf{14(3)} (2018) 895--912

\bibitem{chauhan2021generalized}
Chauhan R S, Ghosh D, Ramik J and Debnath A K,
Generalized Hukuhara-Clarke derivative of interval-valued functions and its properties, \emph{Soft Comput.} \textbf{25(23)} (2021) 14629--14643

\bibitem{cano}
Clarke  F H, Optimization and Nonsmooth Analysis, (1983) (New York: Wiley-Interscience)


\bibitem{Gaop} Gao  D  Y, Duality theory: biduality in nonconvex optimization, in: Floudas C., Pardalos P. (eds) {Encyclopedia of Optimization} (2008)  (Boston, MA: Springer) pp. 477--482



\bibitem{gasim}Gasimov  R N, Augmented Lagrangian duality and nondifferentiable optimization methods in nonconvex programming, \emph{J Glob Optim.} \textbf{24(2)} (2002) 187--203



\bibitem{Ghosh2020lasso} Ghosh D, Debnath, A  K, Mesiar  R and Chauhan R S, Generalized-Hukuhara subgradient and its application in optimization problem with interval-valued functions, \emph{Sadhana}  \textbf{47(2)} (2022) 1--16

\bibitem{Ghosh2016newton} Ghosh  D, Newton method to obtain efficient solutions of the optimization problems with interval-valued objective functions, \emph{J. Appl. Math. Comput.}  \textbf{53(1-2)} (2017) 709--731


\bibitem{Ghosh2019derivative} Ghosh D, Chauhan R S, Mesiar R and Debnath A K, Generalized Hukuhara Gâteaux and Fréchet derivatives of interval-valued functions and their application in optimization with interval-valued functions, \emph{Inf. Sci.} \textbf{510} (2020) 317--340

\bibitem{Ghosh2019gradient}Ghosh D, Debnath A K, Chauhan R S and Castillo  O, Generalized-Hukuhara-Gradient efficient-direction method to solve optimization problems with interval-valued functions and its application in least squares problems, \emph{Int. J. Fuzzy Syst.} \textbf{24(3)} (2021) 1275--1300

\bibitem{Hei} Hai S and Gong  Z, The differential and subdifferential for fuzzy mappings based on the generalized difference of $n$-cell fuzzy-numbers, \emph{J. Comput. Anal. Appl.}  \textbf{24(1)} (2018) pp. 184--195


\bibitem{sara} Hassani S, Mammadov  M and Jamshidi  M, Optimality conditions via weak subdifferentials in reflexive Banach spaces, \emph{Turkish J. Math.} \textbf{41(1)} (2017) 1--8

\bibitem{hassani} Hassani  S and Mammadov M A, Optimality conditions in infinite horizon optimization by contingent derivative, \emph{Pac. J. Optim.} \textbf{14(3)} (2018) 451--462









\bibitem{hiriart} Hiriart-Urruty J B, Generalized differentiability, duality and optimization for problems dealing with differences of convex functions, in: Ponstein J. (eds) Convexity and Duality in Optimization,  Lect. Notes Econ. Math. Syst. \textbf{256}  (1985) (Berlin, Heidelberg: Springer)  pp. 37--70








\bibitem{karman} Karaman  E, Optimality conditions of interval-valued optimization problems by using subdifferentials, \emph{Gazi Univ. J. Sci.} \textbf{33(4)} (2020) 835--841



\bibitem{Inceoglu} Kasimbeyli  R, Inceoglu  G, The properties of the weak subdifferentials, \emph{Gazi Univ. J. Sci.} \textbf{23(1)} (2010) 49--52


\bibitem{karaman}  Karaman, E. (2021), A generalization of interval-valued optimization problems and optimality conditions by using scalarization and subdifferentials, \emph{Kuwait J. Sci.} \textbf{48(2)} (2021) 1--11





\bibitem{kasimbeyli}
Kasimbeyli  R and Mammadov  M, On weak subdifferentials, directional derivatives, and radial epiderivatives for nonconvex functions, \emph{SIAM J. Optim.} \textbf{20(2)} (2009) 841--855


\bibitem{gourav2020} Kumar G, and Ghosh D, Ekeland's variational principle for interval-valued functions, (2021) arXiv:2104.11167


\bibitem{markov}
Markov S, Calculus for interval functions of a real variable, \emph{Computing}, \textbf{22(4)}  (1979) 325--337



\bibitem{Moore1966}  Moore  R  E, Interval Analysis, Vol. \textbf{4} (1966) (Englewood Cliffs: Prentice-Hall)





\bibitem{Moore1979}
Moore, R E, Methods and Applications of Interval Analysis (1979) SIAM Philadelphia.

\bibitem{Osuna}Osuna-Gómez R, Chalco-Cano Y, Hernández-Jiménez  B  and Ruiz-Garzón  G, Optimality conditions for generalized differentiable interval-valued functions, \emph{Inf. Sci.} \textbf{321} (2015) 136--146


\bibitem{Parpas}Parpas P  and Rustem B, Duality gaps in nonconvex optimization, in: Floudas C., Pardalos P. (eds) {Encyclopedia of Optimization} (2008) (Boston,  MA: Springer) pp. 802--805



\bibitem{Picken} Pickenhain S, Duality in optimal control with first order differential equations, in: Floudas C., Pardalos P. (eds) {Encyclopedia of Optimization} (2008) (Boston, MA : Springer) pp. 805--811


\bibitem{rockafellar} Rockafellar  R T, The Theory of Subgradients and its Applications to Problems of Optimization: Convex and Nonconvex Functions, Research and Education in Mathematics, Vol. \textbf{1} (1981) (Berlin: Heldermann Verlag)








\bibitem{stefanini2008}Stefanini  L, A Generalization of Hukuhara difference, in: Dubois D., Lubiano M. A., Prade H., Gil M. Á., Grzegorzewski P., Hryniewicz O. (eds)  Soft Methods for Handling Variability and Imprecision, adv. soft comput. \textbf{48} (2008)   (Berlin, Heidelberg: Springer) pp. 203--210









\bibitem{stefanini2009}
Stefanini L and Bede  B, Generalized-Hukuhara differentiability of interval-valued functions and interval differential equations, \emph{Nonlinear Anal. Theory Methods Appl.} \textbf{71(3-4)} (2009) 1311--1328



\bibitem{gasimov}
Son T Q, Kim D S, and Tam N N, Weak stability and strong duality of a class of nonconvex infinite programs via augmented Lagrangian, \emph{J Glob Optim.} \textbf{53(2)} (2012) 165--184


\bibitem{Tao}
Tao J, and Zhang Z, Properties of interval-valued function space under the gH-difference and their application to semi-linear interval differential equations, \emph{Adv. Differ. Equ.} \textbf{2016(1)}  (2016) 1--28


\bibitem{wu2007karush}
Wu H C (2007), The Karush-Kuhn-Tucker optimality conditions in an optimization problem with interval-valued objective function, \emph{Eur. J. Oper. Res.} \textbf{176(1)} 46--59

\bibitem{Yalcin}
Yalcin G  D and Kasimbeyli R (2020), On weak conjugacy, augmented Lagrangians and duality in nonconvex optimization, \emph{Math. Methods Oper. Res.} \textbf{92(1)} (2020) 199--228


\end{thebibliography}
\end{document}